\newtheorem{theorem}{Theorem}[section]
\newtheorem{lemma}{Lemma}[section]
\newtheorem{proposition}{Proposition}[section]
\newtheorem{definition}{Definition}[section]
\newtheorem{corollary}{Corollary}[section]
\newcommand{\ga}{\gamma}
\newcommand{\La}{\Lambda}
\newcommand{\si}{\sigma}
\newcommand{\ti}{\tilde}
\newcommand{\wh}{\widehat}
\newcommand{\e}{\varepsilon}
\newcommand{\w}{\omega}
\newcommand{\Id}{{\bf 1}}
\newcommand{\size}{{\rm size}}
\newcommand{\ct}{{\rm count}}
\newcommand{\Tr}{{\rm Tr}}
\newcommand{\bS}{{\bf S}}
\newcommand{\bT}{{\bf T}}
\newcommand{\bP}{{\bf P}}
\newcommand{\scl}{{\rm scl}}
\newcommand{\Sh}{{\bf Sh}}
\newcommand{\card}{{\rm Card}}
\newcommand{\dSh}{{\bf dSh}}
\begin{document}

\title
[Uniform Estimates for Some Paraproducts]
{Uniform Estimates for Some Paraproducts}

\author{Xiaochun Li}

\address{
Xiaochun Li\\
Department of Mathematics\\
University of Illinois at Urbana-Champaign\\
Urbana, IL, 61801, USA}

\email{xcli@math.uiuc.edu}

\date{\today}

\subjclass{Primary  42B20, 42B25. Secondary 46B70, 47B38.}
\thanks{Research was partially supported by the NSF}

\keywords{paraproduct, uniform estimate}

\begin{abstract}
We establish $L^p\times L^q$ to $L^r$ estimates for some 
general paraproducts, which arise in  the study of the bilinear 
Hilbert transform along curves.
\end{abstract}
\maketitle

\section{Introduction} 

It is an important theme of current research in 
analysis to decompose 
more complicated operators, such as the Cauchy integral on Lipschitz curves 
\cite{calderon}, as a sum of simpler operators.   This theme has taken special 
prominence in multilinear Harmonic Analysis, beginning with the work 
of Lacey and Thiele \cite{LT1}, which expressed the bilinear
Hilbert 
transforms as a sum of modulated paraproducts. This theme has found 
much broader application as well.

The bilinear Hilbert transforms have a bilinear symbol 
given by restriction to a half-plane, with slope that 
depends upon the transform in question.  
In considering more complicated symbols, 
one is lead to  to paraproducts which have a complicated underlying 
description. 
One then seeks certain estimates 
of these paraproducts that are \emph{uniform} in the 
parametrizations.     This line of investigation was started in 
\cite{T}, the results of which give a new, multilinear proof 
of the boundedness of the Calderon commutator, 
fulfilling a program of study of Calderon \cite{calderon}. 
It was further extended in work of the author and 
Grafakos \cite{LL1,LL2,Li}, in the study of the disc as 
a bilinear multiplier.  Muscalu, Tao and Thiele \cite{MTT1,MTT2,MTT3}
gave alternate proofs (and more general proofs) of these results in the multilinear 
operator setting.

In this paper, we continue this line of study, 
considering certain uniform estimates that are motivated 
by an analysis of 
a blinear Hilbert transform along polynomial curves.  Namely, 
 consider the operators 
\begin{equation} \label{e.PPPPP}
(f,g) \longrightarrow  \textup{p.v.} \int _{-\infty } ^{\infty } 
f (x-y) g (x- p (y))\; \frac {dy} y \,, 
\end{equation}
for some polynomial $ p(y)$. 
The study of these operators leads to 
subtle questions in multilinear analysis, stationary phase 
methods, and paraproducts.  An initial investigation 
into operators of this type is given in \cite{FL}, where 
the polynomial is taken to be a square, and the singular 
kernel is mollified to $e^{i|t|^{-\beta}}/|t|$ for some $\beta >0$.
Without this modification, a significant difficulty might be encountered. 
There is a natural analogue of the bilinear Hilbert transform along parabolas 
in the ergodic theory setting, that is, the non-conventional ergodic 
average $\frac{1}{N}\sum_{n=0}^{N-1}f(T^nx)g(T^{n^2}x)$. In \cite{Fur}, Furstenberg
proved that the characteristic factor of the trilinear ergodic averages 
$\frac{1}{N}\sum_{n=0}^{N-1}f(T^{an})g(T^{bn})h(T^{cn})$ for all $a, b, c\in \mathbb Z$
is characteristic for the previous non-conventional ergodic average.
We are indebted to M. Lacey for bringing these Furstenberg's theorems
to our attention. 
Thus a possible method 
for the bilinear Hilbert transform along a parabola is to understand the tri-linear Hilbert 
transform first. Unfortunately, it turns out the tri-linear Hilbert transform 
is very difficult to handle. 
It is very interesting to find a proof for the bilinear Hilbert 
transform along curves without using any information of the trilinear Hilbert transform. 
It might be possible to obtain such a way by combining time-frequency analysis and 
the known results for the trilinear oscillatory integrals. This investigation 
will appear in another paper.

The paraproducts that arise have a richer parametrization 
than what has been considered before.  
The question of uniform estimates is
the main focus of this article. 
In the next section, a class of paraproducts are 
introduced.  They are  parametrized by 
\begin{itemize}
\item The  \emph{width} of the frequency window associated to the 
paraproducts, denoted by $ L_1$ and $ L_2 $ below. 
\item The \emph{overlap} of the frequency window associated 
to the paraproducts, denoted by $ M_1$ and $ M_2$ below. 
\item A \emph{modulation} of the frequency window, 
denoted by the (lower case) parameters $ n_1,n_2, 2^m$ below.  
\end{itemize}
Prior results have concentrated on the uniformity of estimates with respect to
$M_1, M_2$ from $L^p\times L^q$ to $L^r$ for $r\geq 1$ and $L_1=L_2$ \cite{MTT1}. 
The principal point of this article is to get the estimates for $1/2<r<1$ and arbitrary $L_1, L_2$.
Another new point of this article 
is the (weak) uniformity that we establish in $L_1, L_2$ and the modulation 
parameters $2^m$ (see Theorem \ref{para2est} below).  
This novelty is forced upon us by the 
stationary phase methods that one must use in the analysis of 
(\ref{e.PPPPP}).  One of anticipated applications of our theorems
is the bilinear multiplier problems associated to the symbol defined by 
a characteristic function of a suitable domain with a smooth boundary.   \\

\noindent
{\bf Acknowledgement} The author would like to thank 
his wife, Helen,  and his son, Justin, for being together through
the hard times in the past two years. And he is also very thankful to 
Michael Lacey for  his constant support and encouragement.

\section{Main Results}
\setcounter{equation}0

Let $j\in \mathbb Z$,  $L_1, L_2$ be positive integers and $M_1,
M_2$ be integers.
$$ \w_{1,j}=[2^{L_1j+M_1}/2, 2\cdot2^{L_1 j+M_1}]$$
and
$$\w_{2,j}=[-2^{L_2 j+M_2}, 2^{L_2 j+M_2}]\,.$$
Let $\Phi_1$ be a Schwartz function whose Fourier transform is  a
standard bump function supported on $[1/2, 2]$, and $\Phi_2$ be a
Schwartz function such that  $\wh\Phi_2$ is a standard bump function
supported on $[-1, 1]$ and $\wh\Phi_2(0)=1 $. For $\ell\in\{1,2\}$
and $n_1, n_2\in \mathbb Z$, define $\Phi_{\ell, j, n_\ell}$ by
$$
\wh\Phi_{\ell, j, n_\ell}(\xi)= \big(e^{2\pi i n_\ell (\cdot)} \wh\Phi_\ell (\cdot)\big) \bigg(\frac{\xi}{2^{L_\ell j +M_\ell}}\bigg)\,.
$$
It is clear that ${\wh\Phi_{\ell,j,n_\ell}}$ is supported on
$\w_{\ell,j}$. For locally integrable functions $f_\ell$'s, we
define $f_{\ell,j}$'s by
$$
f_{\ell,j, n_\ell}(x)=f_{\ell}*\Phi_{\ell, j, n_\ell}(x)\,.
$$
We define a paraproduct to be
\begin{equation}\label{defofpara0}
\Pi_{L_1, L_2, M_1, M_2, n_1, n_2}(f_1, f_2)(x) = \sum_{j\in\mathbb Z}
 \prod_{\ell =1}^2 f_{\ell,j, n_\ell}(x) \,.
\end{equation}
Another paraproduct we should introduce is the following.
For $\ell\in\{1,2\}$, let $\w'_{\ell,j}$ denote the set
$\{\xi: 2^{L_\ell j+ M_\ell}/2\leq |\xi|\leq 2\cdot 2^{L_\ell j+M_\ell}\}$.
Let $m$ be a nonnegative integer and define $\Phi_{\ell,j,m}$ by
$$
\wh\Phi_{\ell,j,m}(\xi) = \big( e^{2\pi i 2^m(\cdot)} \wh\Phi_1(\cdot)\big)\bigg(\frac{\xi}{2^{L_\ell j+ M_\ell}}\bigg)\,.
$$
Let $f_{\ell,j, m}$ be the function defined by
$$
f_{\ell,j, m}(x) =f_\ell*\Phi_{\ell,j, m}(x)\,.
$$
We define a paraproduct to be
\begin{equation}\label{type2para}
\Pi_{L_1, L_2, M_1, M_2, m}(f_1, f_2)(x)=\sum_{j\in\mathbb Z}\prod_{\ell=1}
^2 f_{\ell, j, m}(x)\,.
\end{equation}

One reason we study these paraproducts is that one will encounter 
such paraproducts in the study of the bilinear Hilbert transforms
along polynomial curves. 
We have the following uniform estimates for these paraproducts.

\begin{theorem}\label{para0uniest}
For any $p_1>1$, $p_2>1$ with $1/p_1+1/p_2=1/r$, there exists a
constant $C$ independent of $M_1, M_2, n_1, n_2$ such that
\begin{equation}\label{uniest1}
\big\|\Pi_{L_1, L_2, M_1, M_2, n_1, n_2}(f_1, f_2)\big\|_r \leq
  C\big(1+|n_1|\big)^{10}\big(1+|n_2|\big)^{10} \|f_1\|_{p_1}\|f_2\|_{p_2}\,,
\end{equation}
for all $f_1\in L^{p_1}$ and $f_2\in L^{p_2}$.
\end{theorem}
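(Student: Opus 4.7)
The plan is to reduce to a standard paraproduct estimate after first absorbing the modulations $n_1, n_2$ into polynomial growth of Schwartz seminorms. Inverting the Fourier transform yields
$\Phi_{\ell,j,n_\ell}(x) = 2^{L_\ell j+M_\ell}\Phi_\ell\bigl(2^{L_\ell j+M_\ell}x + n_\ell\bigr)$,
so the frequency modulation by $n_\ell$ is nothing more than a translation by $-n_\ell/2^{L_\ell j+M_\ell}$ in physical space. The Fourier support $\w_{\ell,j}$ is preserved, while every Schwartz seminorm of $\Phi_\ell(\cdot+n_\ell)$ grows only polynomially as $C_N(1+|n_\ell|)^N$. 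These polynomial factors account for the $(1+|n_\ell|)^{10}$ loss in the target bound.

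Next, split $\mathbb{Z} = J_1 \cup J_2$ according to whether the annulus $\w_{1,j}$ dominates the ball $\w_{2,j}$ (say $2^{L_1j+M_1} \geq C\,2^{L_2j+M_2}$) or vice versa. On $J_1$ the product $G_j := f_{1,j,n_1}f_{2,j,n_2}$ has Fourier support essentially inside $\w_{1,j}$, and these supports have bounded overlap in $j$, yielding the standard paraproduct structure. Invoke the $H^r$ square function characterization (valid for all $0 < r < \infty$) together with the embedding $\|F\|_{L^r}\leq\|F\|_{H^r}$ to obtain
$\bigl\|\sum_{j \in J_1}G_j\bigr\|_r \lesssim \bigl\|\bigl(\sum_j|G_j|^2\bigr)^{1/2}\bigr\|_r$.
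Pointwise, $\bigl(\sum_j |G_j|^2\bigr)^{1/2} \leq \bigl(\sum_j |f_{1,j,n_1}|^2\bigr)^{1/2}\sup_j|f_{2,j,n_2}|$; bound the first factor in $L^{p_1}$ by the Littlewood--Paley square function of $f_1$, and the second pointwise by $C(1+|n_2|)^N M f_2$, where $M$ is the Hardy--Littlewood maximal function (the loss coming from the Schwartz decay of $\Phi_2(\cdot+n_2)$). H\"older's inequality closes this case.

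On $J_2$ the roles of the two windows are effectively swapped. Apply the telescoping identity $f_{2,j,n_2} = f_2 - (f_2 - f_{2,j,n_2})$ to split the sum into (i) a clean product $f_2 \cdot \sum_{j\in J_2}f_{1,j,n_1}$, handled by H\"older since $\sum_{j\in J_2}\wh\Phi_{1,j,n_1}$ is a uniformly bounded Fourier multiplier on $L^{p_1}$; and (ii) a new paraproduct $\sum_{j\in J_2}f_{1,j,n_1}(f_2 - f_{2,j,n_2})$, in which the high-pass remainder $f_2 - f_{2,j,n_2}$ now provides the high-frequency annular factor at scale $\sim 2^{L_2j+M_2}$, so the $J_1$ argument replays with the roles of $f_1$ and $f_2$ interchanged.

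The principal obstacle is obtaining the bound uniformly in $L_1, L_2, M_1, M_2$ down to the sub-$L^1$ range $1/2 < r < 1$, the novel contribution of this paper. For $r \geq 1$ one could equivalently argue by bilinear duality, but in the sub-$L^1$ regime duality is unavailable, forcing one to use the $H^r$ square function characterization above. One must also verify carefully that the constants appearing in the Littlewood--Paley and maximal function estimates depend only on fixed Schwartz seminorms of $\Phi_1$ and $\Phi_2$, ensuring there is no hidden dependence on $L_1, L_2, M_1, M_2$.
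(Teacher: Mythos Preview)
Your $J_1$ argument is sound and close in spirit to the paper's treatment of the easy case in Lemma~3.1, but part (ii) of your $J_2$ analysis has a genuine gap. The remainder $f_2 - f_{2,j,n_2}$ is \emph{not} an annular factor: its Fourier multiplier $1 - \wh\Phi_{2,j,n_2}(\xi)$ equals $1$ for all $|\xi| > 2^{L_2j+M_2}$, so the support extends to infinity rather than sitting in a shell of width comparable to $2^{L_2j+M_2}$. (For $n_2 \neq 0$ it is not even a clean high-pass, since the modulation prevents $\wh\Phi_{2,j,n_2}$ from being identically $1$ on any neighbourhood of the origin.) Consequently the products $G_j = f_{1,j,n_1}(f_2 - f_{2,j,n_2})$ do not have Fourier supports in boundedly overlapping annuli, and the $H^r$ square-function characterisation you invoke does not apply to $\sum_{j\in J_2} G_j$. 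Even granting that step and attempting to control $\bigl(\sum_{j\in J_2}|f_2 - f_{2,j,n_2}|^2\bigr)^{1/2}$ in $L^{p_2}$ fails already at $p_2 = 2$: writing the remainder heuristically as $P_{>2^{L_2j+M_2}}f_2$, one has
\[
\sum_{j\in J_2}\big\|P_{>2^{L_2j+M_2}}f_2\big\|_2^2 = \int |\wh f_2(\xi)|^2\,\#\{j\in J_2: 2^{L_2j+M_2}<|\xi|\}\,d\xi,
\]
and the counting function is unbounded whenever $J_2$ is an infinite set, which it is except in the degenerate case $L_1=L_2$ with $J_2=\emptyset$.

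This obstruction is exactly what the paper's telescoping (Lemma~3.1) is built to handle, and it requires substantially more than the split you propose. The paper passes to the trilinear form, introduces a third cutoff $\Phi_{3,j}$ at the largest scale, first replaces $f_{2,j,n_2}$ by a bump that genuinely equals $1$ near the origin (already a nontrivial reduction), and then telescopes $f_{2,j}f_{3,j}$ \emph{downward} by only a finite number $m(j)\sim (L_2j+M_2-L_1j-M_1)/L_2$ of steps until the three Fourier supports become disjoint; a change of summation index $j\mapsto j+k$ then converts the inner telescoping sum into a sum of admissible trilinear forms in which at least two factors have honest annular support. Even so, this only delivers the range $r>1$; for $1/2<r\leq 1$ the paper abandons telescoping entirely and runs a full time-frequency argument (trees, sizes, the organisation lemma, and a John--Nirenberg/BMO interpolation on a single tree) in Section~4. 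Your $H^r$ shortcut, if it worked, would be a real simplification --- but the $J_2$ regime is precisely where the uniform-in-$M_1,M_2$ difficulty lives, and the naive high-pass split does not resolve it.
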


\begin{theorem}\label{para2est}
Let $\Pi_{L_1, L_2, M_1, M_2, m}(f_1, f_2)$ be the paraproduct defined 
by (\ref{type2para}). Suppose that for all $j$, 
\begin{equation}\label{2large1}
 2^{L_2 j +M_2} \geq 2^{L_1 j +M_1+m}\,.
\end{equation}
For any  $\e>0$, $p_1>1$, $p_2>1$ with $1/p_1+1/p_2=1/r$, there exists a
constant $C$ independent of $m, M_1, M_2, L_1, L_2$ such that
\begin{equation}\label{uniestpara2}
\big\|\Pi_{L_1, L_2, M_1, M_2, m}(f_1, f_2)\big\|_r \leq
  C 2^{\e m}\|f_1\|_{p_1}\|f_2\|_{p_2}\,,
\end{equation}
for all $f_1\in L^{p_1}$ and $f_2\in L^{p_2}$.
\end{theorem}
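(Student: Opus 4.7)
The plan is to reduce the bilinear estimate to a shifted Littlewood--Paley square function bound via a pointwise Cauchy--Schwarz, and then to absorb the drift of the shifted kernels into a (sub)polynomial loss in $m$.

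\smallskip
\noindent\emph{Step 1 (shift representation).} Write $a_{\ell,j} := 2^{L_\ell j+M_\ell}$ and $\tau_{\ell,j} := 2^m/a_{\ell,j}$. The definition
\begin{equation*}
\wh\Phi_{\ell,j,m}(\xi) = \wh\Phi_1(\xi/a_{\ell,j})\, e^{2\pi i 2^m\xi/a_{\ell,j}}
\end{equation*}
translates spatially to $\Phi_{\ell,j,m}(x) = \psi_{\ell,j}(x+\tau_{\ell,j})$, where $\psi_{\ell,j}(y) := a_{\ell,j}\Phi_1(a_{\ell,j}y)$ is the standard Littlewood--Paley convolutor at scale $a_{\ell,j}$. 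Hence
\begin{equation*}
f_{\ell,j,m}(x) = F_{\ell,j}(x+\tau_{\ell,j}), \qquad F_{\ell,j} := f_\ell*\psi_{\ell,j},
\end{equation*}
so every summand is the product of two ordinary band-pass projections evaluated at a point shifted by exactly $2^m$ units of its own natural scale $1/a_{\ell,j}$. Condition (\ref{2large1}) ensures that the Fourier supports $[a_{\ell,j}/2,2a_{\ell,j}]$ of $f_{1,j,m}$ and $f_{2,j,m}$ are separated by a factor $\geq 2^m$ at every $j$.

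\smallskip
\noindent\emph{Step 2 (Cauchy--Schwarz and H\"older).} Cauchy--Schwarz in $j$ pointwise in $x$, followed by H\"older in $x$ with exponents $p_1/r,p_2/r>1$ (admissible since $p_\ell>1>r$), yields
\begin{equation*}
\bigl\|\Pi_{L_1,L_2,M_1,M_2,m}(f_1,f_2)\bigr\|_r \leq \|S_1f_1\|_{p_1}\|S_2f_2\|_{p_2}, \quad S_\ell f(x) := \Big(\sum_{j\in\mathbb Z}|F_{\ell,j}(x+\tau_{\ell,j})|^2\Big)^{1/2}.
\end{equation*}
Notice that the same argument works for all $r>1/2$ because both factors are band-pass; the low-pass obstruction that forces tile methods for Theorem~\ref{para0uniest} is absent here.

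\smallskip
\noindent\emph{Step 3 (shifted Littlewood--Paley bound).} The theorem reduces to showing that for $1<p<\infty$ and every $\e>0$,
\begin{equation*}
\|S_\ell f\|_p \leq C_{p,\e}\, 2^{\e m/2}\|f\|_p.
\end{equation*}
At $p=2$, Plancherel gives this with constant $O(1)$ independent of $m$, because translations leave $|\wh\psi_{\ell,j}|$ invariant and the $\wh\psi_{\ell,j}$ form an almost disjoint dyadic family. For $p\neq 2$, I would view $f\mapsto(f*\psi_{\ell,j}(\cdot-\tau_{\ell,j}))_j$ as a vector-valued singular integral with values in $\ell^2$; its H\"ormander constant can be bounded by a power of $m$ through a Calder\'on--Zygmund decomposition tracking the $j$-dependent peak location $-\tau_{\ell,j}$. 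Marcinkiewicz interpolation with the $L^2$ bound then yields $\|S_\ell f\|_p\leq C_p m^{\alpha_p}\|f\|_p$, and since $m^{\alpha_p}\leq C_\e 2^{\e m/2}$ for every $\e>0$ the reduction closes.

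\smallskip
\noindent\emph{Main obstacle.} The hard part is the weak-type $(1,1)$ bound of Step~3: because $\psi_{\ell,j}(\cdot-\tau_{\ell,j})$ is peaked at a $j$-dependent point, the cancellation in the H\"ormander integral has to be estimated peak-by-peak, and one must verify that the cumulative contribution across scales costs only a polynomial (equivalently a polylogarithmic in $2^m$) factor, not a genuine power. The remaining ingredients --- the shift reformulation, Cauchy--Schwarz, H\"older, and the Plancherel $L^2$ bound --- are routine.
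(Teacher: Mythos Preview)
Your approach is correct and genuinely different from the paper's. The crucial claim is that the H\"ormander constant of the $\ell^2$-valued kernel $K(x)=(\psi_{\ell,j}(x+\tau_{\ell,j}))_j$ is $O(m)$, and this holds: for fixed $y$, split the scales into (i) $a_{\ell,j}|y|\le 1$, where the mean-value bound gives total contribution $O(1)$; (ii) $a_{\ell,j}\ge 2^{m}/|y|$, where both peaks $-\tau_{\ell,j}$ and $-\tau_{\ell,j}+y$ lie inside $\{|x|\le 2|y|\}$ and the Schwartz tails sum to $O(1)$; and (iii) the intermediate band $1/|y|<a_{\ell,j}<2^{m}/|y|$, which contains $O(m/L_\ell)$ scales, each contributing $\int|K_j(\cdot-y)-K_j|\le 2\|K_j\|_1=O(1)$. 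Using $\|\cdot\|_{\ell^2}\le\|\cdot\|_{\ell^1}$ this gives H\"ormander constant $O(m)$, hence weak $(1,1)$ with constant $O(m)$; the same bound for the adjoint (same kernel condition for a convolution operator) handles $p>2$. Interpolation then gives $\|S_\ell f\|_p\le C_p\,m^{|1-2/p|}\|f\|_p$, which is $\le C_{p,\e}2^{\e m/2}\|f\|_p$ as you claim.

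The paper proceeds quite differently. For $p_1,p_2>2$ it uses a three-term H\"older in $j$ with exponents $p_1,p_2,r'$, so that the $\ell^{p_\ell}$ norm in $j$ matches the $L^{p_\ell}$ norm in $x$ and the shift disappears by Fubini and translation invariance (Proposition~\ref{uniestp2good}); but this trick requires $p_\ell\ge 2$, and the remaining range (in particular $r\le 1$) is obtained via a full time-frequency analysis: a truncated trilinear form on an exceptional set, tree decomposition, size/BMO estimates, and the separation hypothesis \eqref{2large1} enters essentially in Lemma~\ref{trunlem}. Your square-function route is more elementary, covers all $p_1,p_2>1$ uniformly, and in fact does not use \eqref{2large1} at all --- a strengthening the paper flags as plausible but does not prove. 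What the paper's machinery buys is a template that parallels the proof of Theorem~\ref{para0uniest}, where one factor is low-pass and your Cauchy--Schwarz reduction is unavailable.
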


The case when $L_1=L_2$ and $r>1$ was proved in \cite{MTT1}.  
The constant $C$ in Theorem \ref{para0uniest} may depend on $L_1, L_2$. It is easy to see by the
following argument that $C$ is ${O}(\max\{2^{L_1}, 2^{L_2}\})$. It is
possible to get a much better upper bound such as $O\big(\log(1+
\max\{L_2/L_1, L_1/L_2\} )\big)$ by tracking the constants carefully in the proof we will provide. But we do not pursue the sharp constant in this article. The independence of $M_1, M_2$ is the most important issue.
In Sections \ref{para1}, \ref{para2}, we give a proof for Theorem
{\ref{para0uniest}}. The proof of Theorem \ref{para2est} will be
given in Section \ref{proofpara2}. By using Theorem \ref{para0uniest},
we get the $L^r$ bound for $\Pi_{L_1, L_2, M_1, M_2, m}$ with a operator norm
$O(2^{10m})$. Unfortunately sometimes this is not enough for our application. 
The desired norm is $O(2^{\e m})$ for a very small positive number $\e$. It might be
possible to remove the condition (\ref{2large1}) or
get the uniform estimate for $\Pi_{L_1, L_2, M_1, M_2, m}$ in which the operator norm is independent of $m$.
The uniform estimate from $L^2\times L^2$ to $L^1$ is trivial and 
(\ref{2large1}) is redundant for this case. 
In Section \ref{proofpara2}, we see that the uniform estimates 
for $\Pi_{L_1, L_2, M_1, M_2, m}$ can 
be achieved for $p_1, p_2>2$ and $1< r<2$ (see Proposition {\ref{uniestp2good}}) and (\ref{2large1}) is
superfluous for Theorem \ref{para2est} when $p_1, p_2>2$ and $1<r<2$
 (see Corollary \ref{cor91}).\\

\section{A Telescoping Argument}\label{para1}
\setcounter{equation}0

We now start to prove Theorem {\ref{para0uniest}}.
To prove Theorem {\ref{para0uniest}}, we first introduce a definition of
admissible trilinear form. And we should show that by a telescoping argument
used in \cite{LL1, T},  we can reduce the problem to estimates for an admissible trilinear form. And thus $L^r$ estimates for $r>1$
can be obtained by Littlewood-Paley theorem. The $r<1$ case is more complicated.
We have to use the time frequency analysis to deal with this case in 
Section \ref{para2}.

\begin{definition}\label{goodtri}
An admissible trilinear form is a trilinear form
\begin{equation}\label{adtridef}
\La_{L_1, L_2, M_1, M_2, n_1, n_2}(f_1, f_2, f_3) =
\int \sum_{j\in\mathbb Z}
 \prod_{\ell =1}^3 \ti f_{\ell,j, n_\ell}(x)dx\,,
\end{equation}
where $n_3=0$,
$\ti f_{\ell, j, n_\ell} = f_{\ell}*\ti\Phi_{\ell, j, n_\ell}$ and
$\ti\Phi_{\ell,j, n_\ell}$
is a function whose Fourier
transform is supported on $\ti\w_{\ell,j}$
such that
\begin{itemize}
\item[(1)] Each $\ti\w_{\ell,j}$ is an interval in $\mathbb R$ such that
  the distance from the origin to the interval is not more than $3|\ti\w_{\ell,j}|$. And $\{\ti\w_{\ell, j}\}_j$ forms a sequence of lacunary
intervals, that is, $|\ti\w_{\ell, j}|/|\ti\w_{\ell, j+1}|\leq 1/2$ for
all $j\in \mathbb Z$.
Moreover, $|\ti\w_{3,j}|\geq C\max\{|\ti\w_{1,j}|, |\ti\w_{2,j}|\}$ for some constant $C$ independent of $M_1, M_2, n_1, n_2$.
\item[(2)] There are at least two indices $\ell \in \{1,2,3\}$ such that
$\ti\Phi_{\ell, j, n_\ell}$ satisfies
\begin{equation}\label{vanish}
\wh{\ti\Phi_{\ell,j,n_\ell}}(0)=0\,\,\,
\end{equation}
\begin{equation}\label{deriest1}
\bigg | D^{\alpha}\bigg(\wh{\ti\Phi_{\ell, j, n_\ell}}\big(|\ti\w_{\ell,j}|\xi\big)\bigg)\bigg|
\leq \frac{ C_N(1+ |n_\ell|)^{\alpha}}{ (1 + |\xi|)^N}\,,
\end{equation}
for all $\xi\in\mathbb R$ and all nonnegative integers $\alpha, N$.
If an index in $\{1,2,3\}$ satisfies (\ref{vanish}) and (\ref{deriest1}),
we call the index a good index in the trilinear form $\La_{L_1, L_2, M_1, M_2, n_1, n_2}$. For the index which is not
a good index, we call it a bad index in the trilinear form
$\La_{L_1, L_2, M_1, M_2, n_1, n_2}$.
\item[(3)] If $\ell\in\{2,3\}$ is a bad index, then
$\ti\Phi_{\ell, j, n_\ell}$ satisfies (\ref{deriest1}).
Moreover, among the other two good indices $\ell'\neq \ell$, at least
one of them satisfies $|\ti\w_{\ell',j}|\leq C\min\{|\ti\w_{1,j}|,
|\ti\w_{2,j}|, |\ti\w_{3,j}|\}$ for some constant $C$ independent of $f_1$, $f_2$, $f_3$, $M_1$, $M_2$, $n_1$, $n_2$.
\item[(4)]If $1$ is a bad index, then $\ti\Phi_{1, j, n_1}$ satisfies
\begin{equation}\label{bad1}
\ti\Phi_{1, j, n_1}(x) = \sum_{k=0}^{m'(j)}\Phi_{1, j+k,
n_1}(x)\,,
\end{equation}
where $m'(j)$ is some nonnegative integer.
\end{itemize}
\end{definition}

\begin{lemma}\label{telelem1}
Let $f_3$ be a locally integrable function. Then
$$\int \Pi_{L_1, L_2, M_1, M_2, n_1, n_2}(f_1, f_2)(x)f_3(x)dx
$$
is a sum of finitely many admissible trilinear forms such that
the number of admissible trilinear forms in the sum is no more
than a constant $C$ independent of $M_1, M_2, n_1, n_2$.
\end{lemma}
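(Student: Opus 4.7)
The plan is to write
$$\int \Pi_{L_1,L_2,M_1,M_2,n_1,n_2}(f_1,f_2)(x)\,f_3(x)\,dx = \sum_j \int f_{1,j,n_1}(x)\,f_{2,j,n_2}(x)\,f_3(x)\,dx$$
as a sum of a uniformly bounded number of admissible trilinear forms. The starting point is the one-step telescoping identity $f_{2,j,n_2} = g_{2,j,n_2} + f_{2,j-1,n_2}$, where $g_{2,k,n_2} := f_{2,k,n_2} - f_{2,k-1,n_2}$ has $\widehat{g_{2,k,n_2}}(0) = 0$ and is Fourier-concentrated in the lacunary annulus $\omega_{2,k}\setminus\omega_{2,k-1}$. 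Iterating this identity, as in the telescoping method of \cite{LL1,T}, converts the lowpass factor $f_{2,j,n_2}$ into a sum of vanishing-mean bandpass pieces whose products with $f_{1,j,n_1}$ then have manageable Fourier support.

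I would first partition $\mathbb{Z}$ into three arithmetic sub-intervals according to the monotone ratio $|\omega_{1,j}|/|\omega_{2,j}| = 2^{(L_1-L_2)j + M_1 - M_2}$: a region $A$ where $\omega_{1,\cdot}$ dominates, a region $B$ where $\omega_{2,\cdot}$ dominates, and a transition zone of uniformly bounded cardinality. On each region the product $f_{1,j,n_1}\,g_{2,j,n_2}$ has Fourier support $\omega_{1,j} + \omega_{2,j}$ contained in a lacunary interval $\tilde\omega_{3,j}$: bounded away from the origin in $A$, and containing the origin in $B$. By Plancherel I can insert on $f_3$ a bump $\tilde\Phi_{3,j,0}$ whose Fourier transform equals $1$ on this support; the resulting $\sum_j f_{1,j,n_1}\,g_{2,j,n_2}\,(f_3 * \tilde\Phi_{3,j,0})$ is an admissible trilinear form of Definition~\ref{goodtri}, with all three indices good in $A$ and with index~$3$ the unique bad index in $B$. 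Condition~(3) holds in $B$ because $|\tilde\omega_{1,j}| = |\omega_{1,j}| \leq C|\omega_{2,j}|$ and index~$1$ is good.

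For the remainder $\sum_j f_{1,j,n_1}\,f_{2,j-1,n_2}$, I would Abel-sum the index~$2$ completely. Interchanging the orders of summation gives
$$\sum_j f_{1,j,n_1}\,f_{2,j-1,n_2} = \sum_k g_{2,k,n_2}(x)\cdot \Bigl(\sum_{j \geq k+1} f_{1,j,n_1}(x)\Bigr),$$
where the inner sum produces $f_1 * \tilde\Phi_{1,k,n_1}$ with $\tilde\Phi_{1,k,n_1} = \sum_{l=0}^{m'(k)} \Phi_{1,k+l,n_1}$, exactly matching the structure required by condition~(\ref{bad1}). The combined Fourier support of this product is again lacunary, so one more Plancherel insertion of a good $\tilde\Phi_{3,k,0}$ on $f_3$ completes the admissible form, with index~$1$ being the bad telescoped index and indices~$2,3$ both good. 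Counting the (at most $O(1)$) contributions from the transition zone individually, the entire decomposition produces only a uniformly bounded number of admissible trilinear forms.

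The main obstacle is verifying the derivative estimate~(\ref{deriest1}) for the aggregated multiplier $\tilde\Phi_{1,k,n_1} = \sum_{l=0}^{m'(k)} \Phi_{1,k+l,n_1}$: after rescaling by $|\tilde\omega_{1,k}| = 2^{L_1(k+m'(k))+M_1}$, the rescaled Fourier transform must be shown to be a Schwartz function with the polynomial $(1+|n_1|)^\alpha$ loss. This relies on the lacunarity of the supports $\omega_{1,k+l}$ (so only $O(1)$ terms contribute at each frequency) and the Schwartz bound on each individual $\Phi_{1,k+l,n_1}$; the modulation phase $e^{2\pi i n_1 \xi /|\omega_{1,k+l}|}$ contributes the $(1+|n_1|)^\alpha$ factor upon differentiation. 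A secondary check is that the total admissible-form count is independent of all parameters, which is manifest from the construction since each region yields only $O(1)$ such forms.
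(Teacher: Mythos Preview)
Your one-step split and the handling of the $g_{2,j,n_2}$ piece are fine, but the treatment of the remainder has a genuine gap. After the Abel interchange you write
\[
\sum_j f_{1,j,n_1}\,f_{2,j-1,n_2}=\sum_k g_{2,k,n_2}\cdot\Bigl(\sum_{j\geq k+1} f_{1,j,n_1}\Bigr),
\]
and then assert that the inner sum is $\sum_{l=0}^{m'(k)}\Phi_{1,k+l,n_1}*f_1$, matching condition~(4). This is not justified: the inner sum over $j$ is genuinely infinite. Its Fourier support is $\bigcup_{j\geq k+1}\omega_{1,j}\subset[2^{L_1(k+1)+M_1}/2,\infty)$, so the product $g_{2,k,n_2}\cdot\sum_{j\geq k+1}f_{1,j,n_1}$ has Fourier support contained in a half-line that is unbounded above. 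Consequently there is no compactly supported bump $\tilde\Phi_{3,k,0}$ that equals $1$ on this support, and the Plancherel insertion you invoke for $f_3$ does not reproduce the original integral. In other words, telescoping $f_2$ alone, before any frequency localization of $f_3$, gives you no truncation mechanism; nothing forces the tail $j>k+m'(k)$ to vanish.

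The paper's proof avoids this by reversing the order of operations. In the regime $b_{2,j}\geq b_{1,j}/16$ it \emph{first} inserts the lowpass bump $\Phi_{3,j}$ on $f_3$ (which is legitimate since $\omega_{1,j}+\omega_{2,j}$ is bounded), and then telescopes the \emph{product} $f_{2,j-k}f_{3,j-k}$ jointly. The telescoping terminates after $m(j)$ steps precisely because $\int f_{1,j,n_1}f_{2,j-m(j)-1}f_{3,j-m(j)-1}\,dx=0$ by Fourier support disjointness: once both $f_2$ and $f_3$ are localized to scale $\lesssim 2^{L_2(j-m(j)-1)+M_2}$, the high-frequency factor $f_{1,j,n_1}$ sees nothing. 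After the change of variables $j\mapsto j+k$ this produces the finite sum $\sum_{k=0}^{m'(j)}f_{1,j+k,n_1}$ honestly, and the remaining work (the $\Lambda_1,\Lambda_2,\Lambda_3$ decomposition) distributes the difference structure among indices $2$ and $3$. As a side remark, your ``main obstacle'' is a non-issue: when index $1$ is bad, Definition~\ref{goodtri} imposes the structural condition~(4) rather than the derivative bound~(\ref{deriest1}), so there is nothing to verify for the aggregated multiplier beyond exhibiting it as a finite sum of the $\Phi_{1,j+k,n_1}$.
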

\begin{proof}
For $\ell\in \{1,2\}$,  write $\w_{\ell,j}$ as $[a_{\ell,j}, b_{\ell, j}]$.
If $b_{2,j}< b_{1,j}/16$, then $|\w_{2,j}|< |\w_{1,j}|/6$ and
the distance from $\w_{1,j}+\w_{2,j}$ to the origin is not less than
$|\w_{1,j}|/4$. In this case, simply let $\ti\w_{3,j}$ be a small
neighborhood of $-(\w_{1,j}+\w_{2,j})$ and the Fourier transform of
$\ti\Phi_{3,j}$ is a suitable bump function adapted to $\ti\w_{3,j}$,
then we have the desired lemma.
Thus we now only consider the case $b_{2,j}\geq b_{1,j}/16$. Let
$\w_{3,j}$ be $[-18b_{2,j}, 18b_{2,j}]$. And $\Phi_{3,j}$ be a
Schwartz function such that its Fourier transform is a bump
function adapted to $\w_{3,j}$ and
$\wh\Phi_{3,j}(\xi)=1$ for all $\xi\in [-17b_{2,j}, 17b_{2,j}]$. Then
$$
\int\Pi(f_1,f_2)(x)f_3(x) dx = \int \sum_{j\in \mathbb Z} \prod_{\ell=1}^3
 f_{\ell,j, n_\ell}(x) dx\,,
$$
where $f_{3,j,n_3}(x)=f_3*\Phi_{3,j}(x)$ and $n_3=0$. Let $\ti\Phi_2$
be a Schwartz function such that $\wh{\ti\Phi_2}$ is a bump function on
$[-1,1] $ and $\wh{\ti\Phi_2}(\xi)=1$ for all $\xi \in[-3/4, 3/4]$. And
define $\Phi_{2,j}$ by $\wh\Phi_{2,j}(\xi)=\wh{\ti\Phi_2}(\xi/b_{2,j})$.
Let $f_{2,j}=f*\Phi_{2,j}$. We also denote $f_{3,j, n_3}$ by
$f_{3,j}$. We can replace $f_{2,j, n_2}$ by $f_{2,j}$ because
$$
\int \sum_{j\in \mathbb Z}
 f_{\ell,j, n_1}(x) \big( f_{2,j,n_2}-f_{2,j}\big)(x)f_{3,j}(x) dx\,
$$
is an admissible trilinear form. Hence the only thing we need to
show is that
$$
\La'(f_1, f_2, f_3)=\int \sum_{j\in \mathbb Z}
 f_{\ell,j, n_1}(x) f_{2,j}(x)f_{3,j} (x)dx\,
$$
is admissible. For any real number $x$, let $[x]$ denote the largest
integer not exceeding $x$. Let $m(j)$ be the integer defined by
$$
m(j) = \big[ \frac{(L_2j+M_2)-(L_1j+M_1)+6}{L_2}\big]\,.
$$
By $b_{2,j}\geq b_{1,j}/16$, we see that $m(j)\geq 0$. By a
telescoping argument, $\La'(f_1, f_2, f_3) $ equals to
$$
\int \sum_{j\in \mathbb Z}f_{1,j, n_1}(x) \sum_{k=0}^{m(j)}
 \bigg( f_{2, j-k}(x)f_{3, j-k}(x) -
  f_{2, j-k-1}(x)f_{3, j-k-1}(x)\bigg)dx\,,
$$
since $\int f_{1,j,n_1}(x)f_{2,j-m(j)-1}(x)f_{3,j-m(j)-1}(x)dx=0$
due to the following simple fact on the support of Fourier transform
of each function in the integrand, i.e.,
$$
\bigg({\rm supp}\wh f_{1,j, n_1} + {\rm supp }\wh
f_{2,j-m(j)-1}\bigg) \cap \bigg(- \big({\rm supp}\wh
f_{3,j-m(j)-1}\big)\bigg) =\emptyset\,.
$$
By a change of variables $j\rightarrow j+k$, we have that $\La'(f_1,
f_2, f_3) $ is equal to
$$
\int \sum_{j\in \mathbb Z} \sum_{k=0}^{m'(j)}f_{1,j+k, n_1}(x)
 \bigg( f_{2, j}(x)f_{3, j}(x) -
  f_{2, j-1}(x) f_{3, j-1}(x)\bigg)dx\,,
$$
where $m'(j)$ is the integer defined by
$$
m'(j) = \big[ \frac{(L_2j+M_2)-(L_1j+M_1)+6}{L_1}\big]\,.
$$
We write this integral as a sum of three parts $\La_1, \La_2, \La_3$, where
$$
\La_1=\int \sum_{j\in \mathbb Z} \bigg(\sum_{k=0}^{m'(j)}f_{1,j+k, n_1}(x)\bigg)
  f_{2, j}(x) \big( f_{3, j}(x) - f_{3, j-1}(x)\big)dx\,,
$$
$$
\La_2=\int \sum_{j\in \mathbb Z} \bigg(\sum_{k=0}^{m'(j)}f_{1,j+k,n_1}(x)\bigg)
  \big( f_{2, j}(x)-f_{2,j-1}(x) \big)
 \big(f_{3, j-1}(x) - f_{3, j-8}(x)\big)dx\,,
$$
$$
\La_3=\int \sum_{j\in \mathbb Z} \bigg(\sum_{k=0}^{m'(j)}f_{1,j+k,n_1}(x)\bigg)
  \big( f_{2, j}(x)-f_{2,j-1}(x)\big)
 f_{3, j-8}(x) dx\,.
$$
It is clear that $\La_2$ is an admissible trilinear form. Write
$\La_1$ as $\La_{11}+\La_{12}$, where
$$
\La_{11} = \int \sum_{j\in \mathbb Z} \bigg(\sum_{k=0}^{m'(j)}f_{1,j+k,n_1}(x)\bigg)
  \big(f_{2, j}(x)-f_{2,j-1}(x)\big) \big(f_{3, j}(x) - f_{3, j-1}(x)\big)dx\,,
$$
$$
\La_{12} = \int \sum_{j\in \mathbb Z} \bigg(\sum_{k=0}^{m'(j)}f_{1,j+k,n_1}(x)\bigg)
  f_{2,j-1}(x)\big(f_{3, j}(x) - f_{3, j-1}(x)\big)dx\,,
$$
Clearly, $\La_{11}$ is an admissible trilinear form. Notice that
$$
{\rm supp}\bigg( \sum_{k=0}^{m'(j)-10-[L_2/L_1]}\wh f_{1,j+k,n_1}\bigg)
\subseteq [0, 2^{-2}2^{L_2j+M_2} ] = [0, 2^{-2}2^{-L_2}b_{2,j}]\,,
$$
and
$$
{\rm supp}\big(\wh f_{3, j} - \wh f_{3, j-1}\big) \subseteq
[-18b_{2,j}, 18b_{2,j}]\backslash [-16\cdot 2^{-L_2}b_{2,j},
 16\cdot 2^{-L_2}b_{2,j}]\,.
$$
Thus $\La_{12}$ is equal to
$$
\int \sum_{j\in \mathbb Z} \bigg(\sum_{k=m'(j)-10-[L_2/L_1]}^{m'(j)}f_{1,j+k,n_1}(x)\bigg)
  f_{2,j-1, n_2}(x)\big(f_{3, j, n_3}(x) - f_{3, j-1, n_3}(x)\big)dx\,,
$$
which is obviously a finite sum of admissible trilinear forms. As
for $\La_3$, observe that
$$
{\rm supp}\bigg( \sum_{k=0}^{m'(j)-100-[L_2/L_1]}\wh
f_{1,j+k,n_1}\bigg) \subseteq [0, 2^{-80}2^{L_2j+M_2} ] = [0,
2^{-80}2^{-L_2}b_{2,j}]\,,
$$
and
$$
{\rm supp}\big(\wh f_{2, j} - \wh f_{2, j-1}\big) \subseteq
[-b_{2,j}, b_{2,j}]\backslash [-2^{-L_2-1}b_{2,j},
   2^{-L_2-1}b_{2,j}]\,.
$$
Thus $\La_{3}$ is equal to
$$
\int \sum_{j\in \mathbb Z}
\bigg(\sum_{k=m'(j)-100-[L_2/L_1]}^{m'(j)}f_{1,j+k,n_1}(x)\bigg)
  \big(f_{2,j}-f_{2,j-1}(x)\big)f_{3, j-8}(x)dx\,,
$$
which is a finite sum of admissible trilinear forms.
\end{proof}

\begin{lemma}\label{largerest}
Let $\La_{L_1, L_2, M_1, M_2, n_1, n_2}$ be an admissible trilinear form.
Then for any real numbers $p_1, p_2, p_3>1$ with $1/p_1+1/p_2+1/p_3=1$, there
exists $C$ independent of $M_1$, $M_2$, $n_1$, $n_2$ such that
\begin{equation}\label{largep}
\big| \La_{L_1, L_2, M_1, M_2, n_1, n_2}(f_1, f_2, f_3)\big|
\leq C(1+|n_1|)^{10}(1+|n_2|)^{10}\|f_1\|_{p_1}\|f_2\|_{p_2}\|f_3\|_{p_3}\,,
\end{equation}
for all $f_1\in L^{p_1}$,  $f_2\in L^{p_2}$ and $f_3\in L^{p_3}$.
\end{lemma}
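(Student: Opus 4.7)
The plan is to dominate the trilinear form pointwise by a product involving a Littlewood--Paley square function applied to each of the two good indices and a Hardy--Littlewood maximal function applied to the third, and then apply H\"older's inequality.

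By condition (2) of Definition \ref{goodtri}, at least two of the three indices---call them $\ell_1, \ell_2 \in \{1,2,3\}$---are good, leaving a (possibly bad) third index $\ell_3$. The Cauchy--Schwarz inequality in the summation over $j$ yields
\begin{equation*}
|\La_{L_1,L_2,M_1,M_2,n_1,n_2}(f_1,f_2,f_3)|
\leq \int S_{\ell_1}(f_{\ell_1})(x)\,S_{\ell_2}(f_{\ell_2})(x)\,\sup_{j}|\ti f_{\ell_3,j,n_{\ell_3}}(x)|\,dx,
\end{equation*}
where $S_\ell(f)(x) = \bigl(\sum_{j}|f*\ti\Phi_{\ell,j,n_\ell}(x)|^{2}\bigr)^{1/2}$ is the square function attached to the good index $\ell$.

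For each good index $\ell$, the multipliers $\wh{\ti\Phi_{\ell,j,n_\ell}}$ are supported on the lacunary sequence $\{\ti\w_{\ell,j}\}_j$, vanish at the origin by (\ref{vanish}), and obey the scaled derivative bound (\ref{deriest1}) with constants growing polynomially in $(1+|n_\ell|)$. Treating $\{\wh{\ti\Phi_{\ell,j,n_\ell}}\}_j$ as the symbol of an $\ell^2$-valued Calder\'on--Zygmund operator (the vector-valued Littlewood--Paley theorem), one obtains
\begin{equation*}
\|S_\ell(f_\ell)\|_{p_\ell} \leq C(1+|n_\ell|)^{10}\|f_\ell\|_{p_\ell}, \qquad 1<p_\ell<\infty,
\end{equation*}
where only a bounded number of derivatives is needed, producing the quoted polynomial factor.

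For the third factor, if $\ell_3$ is also good, each $\ti f_{\ell_3,j,n_{\ell_3}}(x)$ is pointwise dominated by $C(1+|n_{\ell_3}|)^{10}Mf_{\ell_3}(x)$, where $M$ is the Hardy--Littlewood maximal operator, and the same bound therefore holds for the supremum over $j$. If $\ell_3$ is bad then by condition (3) or (4) we split into two sub-cases: when $\ell_3 \in \{2,3\}$, condition (3) provides the derivative bound (\ref{deriest1}) and the kernel of $\ti\Phi_{\ell_3,j,n_{\ell_3}}$ is an $L^1$-normalised modulated bump on scale $|\ti\w_{\ell_3,j}|$, yielding the same pointwise maximal bound; when $\ell_3 = 1$, the telescoped representation (\ref{bad1}) reassembles into a single band-limited multiplier whose kernel is, up to the modulation factor $e^{2\pi i n_1(\cdot)}$, an $L^1$-normalised approximate identity. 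In all cases $\sup_j|\ti f_{\ell_3,j,n_{\ell_3}}(x)| \leq C(1+|n_{\ell_3}|)^{10}Mf_{\ell_3}(x)$. H\"older's inequality with exponents $(p_1,p_2,p_3)$ and the $L^{p_\ell}$-boundedness of $M$ for $p_\ell > 1$ then yield (\ref{largep}), noting that $n_3 = 0$ makes $(1+|n_3|)^{10}=1$.

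The main obstacle I anticipate is the bad-index case when $\ell_3 = 1$: the telescoped sum in (\ref{bad1}) pools many dyadic scales together with a common modulation parameter $n_1$, and one must verify that this multi-scale reassembly produces a kernel controlled by a single approximate identity with only polynomial dependence on $|n_1|$. This uses the rapid decay of $\wh\Phi_1$ together with the geometric growth of the scales $2^{L_1(j+k)+M_1}$. A subordinate bookkeeping point is to track derivative counts throughout the Calder\'on--Zygmund argument so that the final constant remains polynomial of degree at most ten in each $|n_\ell|$.
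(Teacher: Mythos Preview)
Your overall strategy---Cauchy--Schwarz in $j$, H\"older, Littlewood--Paley for the two good indices, and a maximal bound for the third---is exactly the paper's. The gap is in the case $\ell_3=1$.

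You claim that the kernel $K_j=\sum_{k=0}^{m'(j)}\Phi_{1,j+k,n_1}$ is ``up to modulation an $L^1$-normalised approximate identity,'' hence $\sup_j|\ti f_{1,j,n_1}|\le C(1+|n_1|)^{10}Mf_1$. This is false already for $n_1=0$. Each $\Phi_{1,j+k,0}(x)=2^{s_k}\Phi_1(2^{s_k}x)$ with $s_k=L_1(j+k)+M_1$, and since $\Phi_1(0)=\int\wh{\Phi_1}\neq 0$ in general, for $|x|\sim 2^{-s_{k_0}}$ with $0\le k_0\le m'(j)$ the low-frequency tail $\sum_{k<k_0}2^{s_k}\Phi_1(2^{s_k}x)\sim\Phi_1(0)2^{s_{k_0}}\sim 1/|x|$. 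Thus $\|K_j\|_1\gtrsim m'(j)$, which is unbounded in $j$, and the pointwise maximal bound by $Mf_1$ cannot hold with a uniform constant. The ``rapid decay of $\wh\Phi_1$'' and ``geometric growth of scales'' you invoke do not prevent this: the partial sum of mean-zero dilates of a Schwartz function behaves like a truncated Calder\'on--Zygmund kernel, not an approximate identity.

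The paper fixes this by inserting an intermediate operator: set $g=\sum_{j'} f_1*\Phi_{1,j',n_1}$, a Calder\'on--Zygmund operator applied to $f_1$ (its multiplier is a bounded lacunary sum), so $\|g\|_{p_1}\le C(1+|n_1|)^{10}\|f_1\|_{p_1}$. Because the $\omega_{1,j}$'s are essentially disjoint, the partial sum $\sum_{k=0}^{m'(j)}f_1*\Phi_{1,j+k,n_1}$ equals $P_jg$ where $P_j$ is a smooth frequency cutoff to an interval containing $\bigcup_{k=0}^{m'(j)}\omega_{1,j+k}$ and avoiding the others; writing $P_j$ as a difference of two dilated low-pass filters shows its kernel has bounded $L^1$ norm. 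Hence $\sup_j|\ti f_{1,j,n_1}|\le C\,Mg$, and one concludes via $\|Mg\|_{p_1}\le C\|g\|_{p_1}\le C(1+|n_1|)^{10}\|f_1\|_{p_1}$. The key point you missed is that the maximal function must act on $g$, not on $f_1$.
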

\begin{proof}
If there is no bad index in the trilinear form, take $\ell_0$
to be any integer in $\{1,2,3\}$. Otherwise,
let $\ell_0$ be a bad index.  Applying Cauchy-Schwarz inequality,
$\La_{L_1, L_2, M_1, M_2, n_1, n_2}$ is dominated by
$$
\int \sup_{j\in\mathbb Z}\big|\ti f_{\ell_0, j, n_{\ell_0}}\big|
 \prod_{\ell\neq \ell_0} \bigg(\sum_j\big|\ti f_{\ell, j, n_\ell}\big|^2
\bigg)^{1/2}dx\,.
$$
Using H\"older inequality, we dominate the trilinear form by
$$
\bigg\|\sup_{j\in\mathbb Z}\big|\ti f_{\ell_0, j, n_{\ell_0}} \big|
 \bigg\|_{p_1}
 \prod_{\ell\neq \ell_0} \bigg\|\bigg(\sum_j\big|\ti f_{\ell, j, n_\ell}\big|^2
\bigg)^{1/2}\bigg\|_{p_\ell}\,.
$$
The Littlewood-Paley theorem yields that for $\ell\neq \ell_0$
$$
\bigg\|\bigg(\sum_j\big|\ti f_{\ell, j, n_\ell}\big|^2
\bigg)^{1/2}\bigg\|_{p_\ell}\leq C(1+|n_\ell|)^{10}\|f_\ell\|_{p_\ell}\,.
$$
If $\ell_0\in\{2, 3\}$, then by (\ref{deriest1}), we have
$$
 \sup_{j\in\mathbb Z}\big|\ti f_{\ell_0, j, n_{\ell_0}} \big|\leq
 (1+|n_{\ell_0}|^{10})M(f_{\ell_0})\,,
$$
which clearly yields the lemma. We now only need to consider the case $\ell_0=1$. It suffices to prove that
\begin{equation}\label{1bad}
\bigg\|\sup_j \big|\sum_{k=0}^{m'(j)} f_1*\Phi_{1, j+k, n_1}\big|\bigg\|_{p_1} \leq C(1 + |n_1|^{10})\|f_1\|_{p_1}\,.
\end{equation}
Notice that $\w_{1,j}$'s are essentailly disjoint intervals and
Fourier transform of $\sum_{k=0}^{m'(j)}\Phi_{1, j+k, n_1}$  is
supported on a bounded interval depending on $j$. The left hand side
of (\ref{1bad}) is less than
$$
 C \big\|M\big(\sum_{j}f_1*\Phi_{1, j, n_1}\big)\big\|_{p_1}\,.
$$
It is easy to verify that $\sum_{j} f_1*\Phi_{1,j, n_1}$ is a
bounded operator on $L^2$  associated to a standard Calder\'on-Zygmund
kernel by paying at most a cost of $(1+|n_1|^{10})$ in the corresponding
estimates. Thus by a standard Calder\'on-Zygmund argument, we have
for any real number $p>1$,
there is a constant $C$ independent of $M_1, M_2, n_1, n_2$ such that
$$
\big\|\sum_{j} f*\Phi_{1,j, n_1}\big\|_p \leq C(1+|n_1|^{10})\|f\|_p\,
$$
holds for all $f\in L^p$, which yields (\ref{1bad}).
 Therefore we complete the proof of the lemma.
\end{proof}

Combining Lemma {\ref{telelem1}} and Lemma {\ref{largerest}}, we
obtain (\ref{uniest1}) for $p_1, p_2, r>1$. To finish the proof of
Theorem {\ref{para0uniest}}, we need to provide a proof of $L^r$
estimate with $1/2<r\leq 1$ for  (\ref{uniest1}), which will be
given in Section {\ref{para2}}.

\section{ Time Frequency Analysis}\label{para2}
\setcounter{equation}0

In this section we prove (\ref{uniest1}) with $1/2<r\leq 1$
 for the paraproducts by time frequency analysis, which was used for establishing 
$L^p$ (uniform) estimates for the bilinear Hilbert transforms in
\cite{LL2, LT1, LT2, Li, MTT2, MTT1,  MTT3, T}.

Let $F$ be a measurable set in $\mathbb R$. $X(F)$ denotes the set
of all measurable functions supported on $F$ such that the
$L^\infty$ norms of the functions are no more than $1$. A function
in $X(F)$ can be considered essentially as the characteristic
function $\Id_F$.


To obtain Theorem {\ref{para0uniest}}, by Lemma {\ref{largerest}},
an interpolation argument in \cite{MTT2},  and the scaling
invariance, it is sufficient to prove that for any $p_1, p_2>1$ such
that $1/p_1+1/p_2 \geq 1$
 and any measurable set $F_3\subseteq\mathbb R$ with $|F_3|=1$,
there exists a subset $F'_3\subset F_3$ such that $|F'_3|\geq 1/2$
and
\begin{equation}\label{triest000}
\bigg|\int\Pi_{L_1, L_2, M_1, M_2, n_1, n_2}(f_1, f_2)(x)f_3(x)
dx\bigg|\leq
 C (1+|n_1|)^{10}(1+|n_2|)^{10}|F_1|^{1/p_1}|F_2|^{1/p_2}\,
\end{equation}
holds for all $f_1\in X(F_1), f_2\in X(F_2), f_3\in X(F'_3)$, where
$C$ is a constant independent of $f_1, f_2, f_3$, $M_1, M_2, n_1,
n_2$.

If $2^{L_2j+M_2}< 2^{L_1j+M_1}/8$,  let $\w_{3,j}=[-19\cdot
2^{L_1j+M_1}/8, -2^{L_1j+M_1}/8]$ and $\Phi_{3, j}$ be a Schwartz
function whose Fourier transform is  a bump function adapted to
$\w_{3,j}$ such that $\wh\Phi_{3,j}(\xi)=1$ for all $\xi\in [-9\cdot
2^{L_1j+M_1}/4, -2^{L_1j+M_1}/4]$. If $2^{L_2j+M_2}\geq
2^{L_1j+M_1}/8$, let $\w_{3,j}=[-18\cdot 2^{L_2j+M_2}, 18\cdot
2^{L_2j+M_2}]$ and $\Phi_{3, j}$ be a Schwartz function whose
Fourier transform is  a bump function adapted to $\w_{3,j}$ such
that $\wh\Phi_{3,j}(\xi)=1$ for all $\xi\in [-17\cdot 2^{L_2j+M_2},
17\cdot 2^{L_2j+M_2}]$. Let $n_3=0$, $\Phi_{3,j,n_3}=\Phi_{3,j} $,
$f_{3, j, n_3}(x)=f_3*\Phi_{3,j, n_3}(x)$. Define a trilinear form
$\La_{L_1, L_2, M_1, M_2, n_1, n_2}$ by
\begin{equation}\label{defoftriform}
\La_{L_1, L_2, M_1, M_2, n_1, n_2}(f_1, f_2, f_3)
 =\int \sum_{j\in\mathbb Z}\prod_{\ell=1}^3 f_{\ell, j, n_\ell}(x) dx\,.
\end{equation}
Clearly $ \La_{L_1, L_2, M_1, M_2, n_1, n_2} = \int \Pi_{L_1, L_2,
M_1, M_2, n_1, n_2}(f_1, f_2)(x)f_3(x)dx$.
Thus to prove
(\ref{triest000}), it suffices to prove the following lemma.

\begin{lemma}\label{triest0}
Let $p_1, p_2>1$ such that $1/p_1+1/p_2\geq 1 $ and $\La_{L_1, L_2,
M_1, M_2, n_1, n_2}$ be the trilinear form defined by
(\ref{defoftriform}). Let $F_1, F_2, F_3$ be measurable sets in
$\mathbb R$ with $|F_3|=1$.
Then there exists a subset $F_3'\subseteq F_3$ such that
$|F_3'|>1/2$ and there exists a constant $C$ independent of  $F_1$,
$F_2$, $F_3$, $f_1$, $f_2, f_3$, $M_1$, $M_2$, $n_1, n_2$ such that
 \begin{equation}\label{triest002}
\big|\La_{L_1, L_2, M_1, M_2, n_1, n_2}(f_1, f_2, f_3)\big|\leq
 C (1+|n_1|)^{10}(1+|n_2|)^{10}|F_1|^{1/p_1}|F_2|^{1/p_2}\,
\end{equation}
holds for all $f_1\in X(F_1), f_2\in X(F_2), f_3\in X(F'_3)$.
\end{lemma}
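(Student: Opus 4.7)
The plan is to establish the restricted weak-type estimate (\ref{triest002}) by the standard time--frequency decomposition, adapted to the multi-scale nature of the tiles here. First I would define the exceptional set
$$E := \{x : M\mathbf{1}_{F_1}(x) > C_0 (1+|n_1|)^{10}|F_1|\} \cup \{x : M\mathbf{1}_{F_2}(x) > C_0 (1+|n_2|)^{10}|F_2|\}$$
with $C_0$ chosen so that $|E| < 1/2$, and put $F_3' := F_3 \setminus E$. Any $f_3 \in X(F_3')$ then vanishes on $E$, so wave-packet contributions whose spatial intervals sit deep inside $E$ will enjoy rapid decay and can be absorbed into the error.

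Next I would discretize: for each $\ell\in\{1,2,3\}$ and each $j$, decompose $f_{\ell,j,n_\ell}$ as a sum of wave packets $\langle f_\ell,\phi_{P_\ell}\rangle\phi_{P_\ell}$ supported on tiles $P_\ell=I_{P_\ell}\times\w_{\ell,j}$ with $|I_{P_\ell}|=2^{-(L_\ell j+M_\ell)}$, turning $\La_{L_1,L_2,M_1,M_2,n_1,n_2}$ into a sum over compatible tile triples $(P_1,P_2,P_3)$. The modulation $e^{2\pi i n_\ell(\cdot)}$ built into $\wh\Phi_{\ell,j,n_\ell}$ costs each wave packet a factor of at most $(1+|n_\ell|)^{10}$ in its spatial decay, which is exactly the source of the polynomial $n$-factors in the final bound.

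The tile triples would then be organized into trees whose tops are adapted to the coarsest frequency scale $\w_{3,j}$, and the usual size/energy machinery is applied. Using the exceptional-set control one gets $\text{size}_\ell \lesssim (1+|n_\ell|)^{10}|F_\ell|$ (with rapid decay measured in the distance from the tree top to $E$) for $\ell=1,2$ and $\text{size}_3\lesssim 1$, while Bessel's inequality yields $\text{energy}_\ell\lesssim\|f_\ell\|_2\leq|F_\ell|^{1/2}$. The standard tree lemma then bounds each tree's contribution by
$$\prod_{\ell=1}^{3}\text{size}_\ell^{1-\theta_\ell}\,\text{energy}_\ell^{\theta_\ell}$$
for any admissible triple $(\theta_1,\theta_2,\theta_3)$, and choosing these exponents so as to match the constraint $1/p_1+1/p_2\geq 1$ delivers the claimed right-hand side $|F_1|^{1/p_1}|F_2|^{1/p_2}$; since $|F_3'|\leq 1$ and $f_3\in X(F_3')$, the third exponent is absorbed for free.

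The main obstacle is the genuinely multi-scale feature $L_1\neq L_2$: the three factors live at different frequency resolutions, so the tile geometry underlying the trees must be defined relative to the coarsest scale, with the finer-scale tiles summed orthogonally inside each tree. Since $C$ is allowed to depend on $L_1, L_2$, geometric losses in the ratio $L_2/L_1$ are acceptable, but one has to verify carefully that none of the size, energy, or Bessel estimates accumulates powers of $M_1$ or $M_2$, since independence in those parameters is the whole point of the theorem.
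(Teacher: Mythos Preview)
Your proposal has a genuine structural gap. The size/energy machinery you invoke is designed for modulation-invariant operators such as the bilinear Hilbert transform, where distinct trees carry disjoint frequency supports and Bessel's inequality gives $\text{energy}_\ell\lesssim\|f_\ell\|_2$. Here there is no modulation parameter: for every tile the frequency interval is completely determined by the scale index $j$, so different trees are distinguished only spatially and Bessel yields nothing useful. The paper accordingly never uses energy; it runs a pure size/count decomposition (Lemma~\ref{prilem}), where $\ct(\bS_\sigma)\lesssim\sigma^{-p}|F_\ell|$ comes directly from the weak-type bound for $M_p(M\Id_{F_\ell})$, and the entire burden is shifted to the single-tree estimate $|\La_\bT|\lesssim\size_1^*(\bT)\size_2^*(\bT)|I_\bT|$ (Lemma~\ref{sizelem}).

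That single-tree estimate is where the real difficulty lies, and your sketch does not address it. Because $L_1\neq L_2$, at each $j$ only one of the three rectangles $I_s\times\w_{\ell,j}$ is a Heisenberg box; the other two are eccentric, so the usual dichotomy ``one factor controlled by a maximal function, the other two by square functions'' is not directly available. The paper's remedy is: (i) build the spatial grid at the \emph{coarsest} spatial scale $2^{-k_j}$ with $k_j=\min_\ell k_{j\ell}$ --- not at each $2^{-(L_\ell j+M_\ell)}$ separately, and not at the scale of $\w_{3,j}$, which is the \emph{largest} frequency window, not the coarsest; (ii) define sizes via an $L^p$ norm that also controls one derivative (Definition~\ref{defzetanorm}), and run a John--Nirenberg argument (Lemmas~\ref{goodbmo1} and \ref{goodbmo2}) to upgrade this to $L^\infty$/BMO control on the factor whose box is Heisenberg; (iii) in the case $|\w_{2,j}|>|\w_{1,j}|/6$, rerun the telescoping of Section~\ref{para1} \emph{inside} each tree, carrying along the auxiliary $\zeta$-seminorm to track the shifted scales. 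The phrase ``finer-scale tiles summed orthogonally inside each tree'' does not substitute for this: orthogonality alone will not deliver the $L^{p'}$ bound on $\Delta_1(\bT)$ needed in the tree estimate, which is precisely what the BMO interpolation supplies.
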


Lemma {\ref{triest0}} and Lemma {\ref{largerest}} implies the
estimates (\ref{uniest1}) by an interpolation argument in
\cite{MTT2}. Therefore we obtain Theorem \ref{para0uniest} once we
finish a proof of Lemma \ref{triest0}. The following subsections are
devoted to proof of Lemma \ref{triest0}.

\subsection{Definitions}
To prove Lemma \ref{triest0}, we introduce some definitions first.
Let $\psi$ be a nonnegative Schwartz function such that $\wh \psi$
is supported in $[-1/100,1/100]$ and satisfies $\wh\psi(0)=1$. Let
$\psi_k(x)=2^k\psi(2^k x)$ for any $k\in \mathbb Z$. For
$j\in\mathbb Z$ and $\ell\in\{1,2,3\}$, define  $k_{j\ell}$ to be an
integer such that $|\w_{\ell, j}|\sim 2^{k_{j\ell}}$. Denote
$\min_{\ell\in\{1,2,3\}}k_{j\ell}$ by $k_j$.
 And define
$$
I_{k_j,n} = [2^{-k_j}n,
2^{-k_j}(n+1)]\,.
$$
Define
$$
\Id^*_{j,n}(x) = \Id_{I_{k_j, n}}*\psi_{k_{j}}(x)\,.
$$
It is easy to see that
$$
\La_{L_1, L_2, M_1, M_2, n_1, n_2}(f_1, f_2, f_3) =
\int\sum_{j\in\mathbb Z}\sum_{n\in\mathbb Z}\Id^*_{j,n}(x)
\prod_{\ell=1}^3 f_{\ell,j, n_\ell}(x)dx\,.
$$
For an integer $ \gamma$ with $0\leq \ga < 2^{100}$, let $\mathbb
Z(\ga)$ be the set of all integers congruent to $\ga$ modulo
$2^{100}$.  For
${\bf S}\subset \mathbb Z(\ga)\times \mathbb Z$
 we define
\begin{equation}\label{ladef3}
\La_{\bf S}(f_1, f_2, f_3) = \int_{\mathbb R}
\sum_{(j,n)\in \mathbb {\bf S}}\Id^*_{j,n}(x)
 \prod_{\ell =1}^3  f_{\ell, j, n_\ell}(x)dx\,.
\end{equation}
$\La_{\bf S}$ depends on $L_1, L_2, M_1, M_1, n_1, n_2$. We suppress
this dependence for notational convenience. Note that there are
finite congruence classes modulo $2^{100}$. We will therefore
concentrate on proving Lemma {\ref{triest0}} for the trilinear form
$\La_{\bS}$.

In time-frequency space, each function $f_{\ell, j, n}$ for
$\ell\in\{1,2, 3\}$ corresponds to a box $I_{k_j,n}\times
 \w_{\ell, j}$. The most difficult situation is when only one of
boxes is the Heisenberg box, i.e., $|I_{k,j,n}||\w_{\ell,j}|\sim 1$.
In this situation, we can use the John-Nirenberg type argument to
get the equivalence of $L^p$ estimates of  Littlewood-Paley type 
 square functions  for only one of
functions. For other two functions, there is no such an equivalence and
an extra cost for it has to been paid if one estimates the $BMO$
norm. It turns out that the $L^p$ equivalence for at least one of
three functions is the most crucial key to solve the problem. Our
proof will heavily rely on this equivalence for one of functions.

Let $p$ be a positive number close to $1$. To obtain the Lemma
\ref{triest0}, it suffices to prove (\ref{triest002}) for $p_1\geq
p$, $p_2\geq p$ and $1/p_1+1/p_2\geq 1$. For simplicity, we only
deal with the case $n_1=n_2=n_3=0$. The general case can be handled
in the same way by paying at most a cost of
$(1+|n_1|)^{10}(1+|n_2|)^{10}$ in the constants.

We now start to prove that for $n_1=n_2=0$, any $1<p<2$ and any
measurable set $F_3$ with $|F_3|=1$ in $\mathbb R$, there exists a
subset $F'_3$ of $F_3$ with $|F_3'|\geq 1/2$ such that
\begin{equation}\label{triestp}
\big|\La_{\bS}(f_1, f_2, f_3)\big|\leq
 C |F_1|^{1/p_1}|F_2|^{1/p_2}\,
\end{equation}
holds for all $p_1\geq p, p_2\geq p$ with $1/p_1+1/p_2\geq 1$,
$f_1\in X(F_1), f_2\in X(F_2), f_3\in X(F'_3)$, where the constant
$C$ is independent of $\bS$, $F_1$, $F_2$, $F_3$, $f_1$, $f_2, f_3$,
$M_1$, $M_2$. Let us introduce some definitions first.

\begin{definition}\label{defofeset}
Let $p>1$.  Define the exceptional set  $\Omega$ by
\begin{equation}\label{defOmega}
\Omega = \bigcup_{\ell=1}^3\big\{ x\in\mathbb R:
M_p\big(M\Id_{F_\ell}\big)(x)> C_0 |F_\ell|^{1/p} \big\}
\end{equation}
where $Mf$ is the Hardy-Littlewood maximal function of $f$ and
$M_pf$ equals to $\big(M(|f|^p)\big)^{1/p}$.
\end{definition}

By this definition, for the measurable set $F_3$ with  $|F_3|=1$, we
take $F'_3=F_3\backslash \Omega$. If $C_0$ is chosen sufficiently
large we see that $|F'_3|\geq |F_3|/2$.

\begin{definition}\label{def4}
Given ${\bS}\subset \mathbb Z(\ga)\times \mathbb Z$ and
$s=(j,n)\in {\bf S}$. Let
$k_s=\min_{\ell\in\{1,2,3\}}\{k_{j\ell}\}$. The dyadic interval $
[2^{-k_s}n, 2^{-k_s}(n+1)]$ is called the time interval of $s$.  We
denote it by $I_{s}$.
\end{definition}

\begin{definition}
Let $\bS $ be a subset of $\mathbb Z(\ga)\times\mathbb Z$. We
say that $\bS$ is a convex set in $\mathbb Z(\ga)\times\mathbb
Z$ if for any $s\in \mathbb Z(\ga)\times\mathbb Z$ with
$I_{s_1}\subseteq I_s \subseteq I_{s_2}$ for some $s_1, s_2\in\bS$,
we have $s\in \bS$.
\end{definition}

\begin{definition}
Let $\bT\subset \bS$.
If there is $t\in\bT$ such that $I_{s}\subset I_{t}$ holds
for all $s\in T$, then $\bT$ is called a tree with top $t$. $\bT$ is called a
maximal tree with top $t$ in ${\bf S}$ if there does not
exist a larger tree in $\bS$ with the same top strictly containing
$\bT$.
\end{definition}

\begin{definition}\label{shadow}
Let $\bT $ be a tree in $\bS$. Define
$\scl(\bT)$ the set of scale indices
of $\bT$ by
$$
\scl(\bT)=\{j\in\mathbb Z: \exists n\in\mathbb Z, {\rm s.}\,\,{\rm t.}\,\,
  (j,n)\in \bT\}\,.
$$
For $j\in\scl(\bT)$, the $j$-th shadow of $\bT$ is defined by
$$
\Sh_j(\bT)=\bigcup\big \{ I_s: s=(j,n)\in\bT\big\} \,.
$$
Define an approximation of $\Id_{\Sh_j(\bT)}$ by
$$
\Id^*_{\Sh_j(\bT)}(x) = \Id_{\Sh_j(\bT)}*\psi_{k_j}(x)\,.
$$
\end{definition}

\begin{definition}\label{defzetanorm}
Let $(j,n)=s\in\bS$ and $\ell\in\{1,2,3\}$. And let
$$
 \Id^{**}_{ j,n}(x)=\int_{I_{k_j,n}}\frac{2^{ k_{j}}}
   {(1+ 2^{2 k_{j}}|x-y|^2)^{200}} dy
\,.
$$
 Define a semi-norm $\|f_\ell\|_{j,n}$ by
\begin{equation}\label{defseminorm}
\big\|f_\ell\big\|_{j,n}\, =\big\|f_\ell\big\|_s\,=
\frac{1}{|I_s|^{1/p}}\big\|\Id^{**}_{j,n}f_{\ell, j,
n_\ell}\big\|_p+
\frac{1}{|I_s|^{1/p}}\big\|2^{-k_{j\ell}}\Id^{**}_{j,n}Df_{\ell, j,
n_\ell}\big\|_p\,
\end{equation}
where $Df_{\ell, j, n_\ell}$ is the derivative of $f_{\ell, j,
n_\ell} $.

Define $\zeta(j,M,K)$ by
\begin{equation}\label{defofzeta}
\zeta(j,M,K)=\big[\frac{L_1j+M_1-M_2-6}{L_2}\big]+
\big[\frac{L_1}{L_2}\big]M + K\,, \end{equation}
where $L=2^{100}$,
$K$ is an integer between $-10L$ and $10L$ and $M$ is an integer
between $0$ and $6L$. For $\ell\in\{2,3\}$, we define a $\zeta$
semi-norm $\big\|f_\ell\big\|_{j,n,\zeta}$ by
\begin{equation}\label{defseminormzeta}
\big\|f_\ell\big\|_{j,n,\zeta}= \|f_\ell\|_{j,n}+\sup_{M,K}
\frac{1}{|I_s|^{1/p}}\big(\big\|\Id^{**}_{j,n}f_{\ell, \zeta(j,M,K),
0}\big\|_p+\big\||I_s|\Id^{**}_{j,n}Df_{\ell, \zeta(j,M, K),
0}\big\|_p\big)\,.
\end{equation}
For $\ell=1$, let the $\zeta$ semi-norm $\big\|f_1\big\|_{j,n,\zeta}
=\big\|f_1\big\|_{j,n}$.
\end{definition}

\begin{definition}
Let $\bT\subset \bS$ be a tree and  $t=(j_\bT, n_{\bT})\in\bT$ be
the top of $\bT$. Denote by $I_{\bT}$ the time interval of the top of
tree $\bT$.

\begin{itemize}
 \item[(a)]
  In the case $|\w_{2,j}| \leq |\w_{1,j}|/6$ for all $j\in\scl(\bT)$, define
$\Delta^*_\ell(\bT)$ for $\ell\in\{1,3\}$ by
\begin{equation}\label{defdelta1}
\Delta^*_\ell(\bT)(x) = \bigg(\sum_{(j,n)\in \bT}
\big|\Id^{**}_{j,n} f_{\ell,j,n_\ell}(x)\big|^2 \bigg)^{1/2}\,.
\end{equation}
For $\ell=2$,  define
\begin{equation}\label{defofdelta12}
\Delta^*_2(\bT)(x) = \big |\Id^{**}_{j_\bT,n_\bT}
f_{2,j_\bT,n_\ell}(x)\big|
 \,.
\end{equation}
And in this case,  for $\ell\in\{1,2,3\}$, define the $\ell$-size of
$\bT$ by
\begin{equation}\label{defofsizec1}
\size_\ell(\bT)= \frac{1}{|I_{\bT}|^{1/p}}
\big\|\Delta^*_\ell(\bT)\big\|_{p} +
\big\|f_\ell\big\|_{j_\bT,n_\bT}\,.
\end{equation}
\\

\item[(b)] In the case $|\w_{2,j}| > |\w_{1,j}|/6$ for
all $j\in\scl(\bT)$, for $\ell=2,3$,
let $f_{\ell,j, \bT}=f_{\ell,j,0}$ if $j\in\scl(\bT)$ and
$f_{\ell,j,\bT}\equiv 0$ if $j\notin \scl(\bT)$.
 Define the $\Delta^*_\ell(\bT)$ to be
\begin{equation}\label{defdelta2}
 \bigg(\sum_{(j,n)\in \bT} \big|\Id^{**}_{j,n} \big(f_{\ell,j,\bT}
- f_{\ell, j-L ,\bT} \big)(x)\big|^2 \bigg)^{1/2} +
\bigg(\sum_{(j,n)\in \bT} \big|\Id^{**}_{j,n} \big(f_{\ell,j,n_\ell}
- f_{\ell, j ,0} \big)(x)\big|^2 \bigg)^{1/2} \,.
\end{equation}
And define $\Delta^*_1(\bT)$ by
\begin{equation}\label{defdelta21}
\Delta^*_1(\bT)(x)= \bigg(\sum_{(j,n)\in \bT}
\big|\Id^{**}_{j,n}f_{1,j,n_1}(x)\big|^2 \bigg)^{1/2}\,.
\end{equation}
In this case,  for $\ell\in\{1,2,3\}$, define the $\ell$-size of
$\bT$ by
\begin{equation}\label{defofsize2b}
\size_\ell(\bT)= \frac{1}{|I_{\bT}|^{1/p}}
\big\|\Delta^*_\ell(\bT)\big\|_{p} +
\big\|f_\ell\big\|_{j_\bT,n_\bT,\zeta}\,.
\end{equation}
\end{itemize}

Let ${\bf P}$ be a subset of ${\bf S}$. Define the $\ell$-${size}^*$
 of ${\bf P}$
by
\begin{equation}\label{defofsize1}
\size^*_\ell({\bf P}) = \sup_{ \bT: \bT\subset {\bf P}}\size_\ell(\bT)\,,
\end{equation}
where $\bT$ ranges over all trees in ${\bf P}$.

In the definition of $\Id^{**}_{j,n}$, we can replace the exponent
$200$ by a larger number $2^{100}$ to define a new function. We denote
this function by  $\ti\Id^*_{j,n}$.
If $\Id^{**}_{j,n}$ is replaced by $\ti\Id^*_{j,n}$ in the definition
of $\Delta^*_\ell(\bT)$, we denote the corresponding function by
$\Delta_\ell(\bT)$.
\end{definition}

\begin{definition}\label{defcount}
Let ${\bS}$ be a subset of $\mathbb Z(\ga)\times\mathbb Z$.
Suppose that ${\bf S}$ is a union of trees $\bT\in {\mathcal F}$.
Define $\ct({\bf S})$ by
\begin{equation}\label{defofct}
 \ct({\bf S}) =\sum_{\bT\in {\mathcal F}}|I_{\bT}|\,.
\end{equation}
\end{definition}

\subsection{Reduction}
Let $\bS$ be a subset of $\mathbb Z(\ga)\times \mathbb Z$. For
$\Omega$ defined in (\ref{defOmega}), we define
\begin{equation}\label{defofbsomega}
\bS(\Omega) =\{s\in\bS: I_s\nsubseteq
\Omega\}\,.
\end{equation}
The following lemma indicates that we only need to seek the upper
bound for the trilinear form $\La_{\bS(\Omega)}$.
\begin{lemma}\label{lemomega}
Let $n_1=n_2=0$ and $f_3\in X({F_3'})$.   For all functions $f_1\in
X(F_1)$, $f_2\in X(F_2)$, the following inequality holds.
\begin{equation}\label{triest112}
\big| \La_{\bS}(f_1, f_2, f_3)-\La_{\bS(\Omega)}(f_1, f_2, f_3)\big|
\leq C\min\big\{1, |F_1|^{1/p}\big\}\min\big\{1,
|F_2|^{1/p}\big\}\,,
\end{equation}
where $C$ is a constant independent of $\bS$, $F_1$, $F_2$, $F_3$, $
f_1, f_2$, $f_3, M_1, M_2$.
\end{lemma}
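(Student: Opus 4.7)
The quantity $\La_{\bS}-\La_{\bS(\Omega)}$ equals $\La_{\bS\setminus\bS(\Omega)}(f_1,f_2,f_3)$, the trilinear form restricted to tiles $s=(j,n)$ whose time intervals $I_s$ lie entirely in $\Omega$. Since $f_3$ is supported in $F_3'\subseteq\Omega^c$ while $\Id^*_{j,n}$ essentially localizes on $I_s\subseteq\Omega$, every contribution comes from the ``leakage'' of $f_{3,j,n_3}=f_3*\Phi_{3,j,n_3}$ onto $I_s$. Schwartz decay of $\Phi_{3,j,n_3}$ at scale $2^{-k_{j3}}\leq|I_s|$ then forces this leakage to be small whenever $I_s$ sits deep inside $\Omega$, which is the entire source of the improvement over the trivial bound.

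To turn this into a quantitative estimate, I would decompose $\bS\setminus\bS(\Omega)$ dyadically by depth in $\Omega$: for each integer $k\geq 0$ set
\begin{equation*}
\bS^{(k)}=\bigl\{s\in\bS\setminus\bS(\Omega):\ 2^k|I_s|\leq 1+d(I_s,\Omega^c)<2^{k+1}|I_s|\bigr\},
\end{equation*}
and let $I_s^{(k)}$ denote the concentric $2^{k+2}|I_s|$-neighborhood of $I_s$. Two pointwise estimates then drive the argument. For $x\in I_s^{(k)}$, Schwartz decay of $\Phi_{3,j,n_3}$ gives $|f_{3,j,n_3}(x)|\leq C_N 2^{-Nk}$ for every $N$. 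For $\ell=1,2$, I would pick a point $y_s\in I_s^{(k)}\cap\Omega^c$; by the very definition of $\Omega$ we have $M_p(M\Id_{F_\ell})(y_s)\leq C_0|F_\ell|^{1/p}$, and combined with the pointwise bound $|f_{\ell,j,n_\ell}|\leq C\, M\Id_{F_\ell}$ this translates into the local estimate
\begin{equation*}
\|f_{\ell,j,n_\ell}\|_{L^p(I_s^{(k)})}\leq C\,(2^k|I_s|)^{1/p}|F_\ell|^{1/p}.
\end{equation*}

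Applying H\"older's inequality on $I_s^{(k)}$ with exponents $(p,p,\infty)$ when $p\geq 2$, or interpolating with the trivial bound $|f_\ell|\leq 1$ when $1<p<2$, and then summing in $n$ at fixed scale $j$ using the disjointness of $\{I_s\}_n$ together with $\sum_{(j,n)\in\bS^{(k)}}|I_s|\leq C2^k|\Omega|\leq C2^k$ (which follows from $|\Omega|\leq C$, itself a consequence of the weak-$L^p$ boundedness of $M_p\circ M$), and finally summing in $j$, the rapid decay $2^{-Nk}$ with $N$ chosen large defeats all geometric growth and yields $|\La_{\bS\setminus\bS(\Omega)}|\leq C|F_1|^{1/p}|F_2|^{1/p}$. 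Replacing $|F_\ell|^{1/p}$ by $\min\{1,|F_\ell|^{1/p}\}$ follows by alternately invoking the trivial bound $|f_\ell|\leq 1$ whenever $|F_\ell|\geq 1$ and repeating the above with $L^\infty$ in place of the $L^p$-average in that slot.

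The main obstacle will be controlling the sum over scales $j$ uniformly in $L_1,L_2,M_1,M_2$, since tiles in $\bS\setminus\bS(\Omega)$ can a priori span an unbounded number of scales; one must check that the disjointness of $\{I_s\}_n$ at each fixed $j$ together with the depth-based decay $2^{-Nk}$ from $f_3$ is enough to defeat this proliferation, independently of the paraproduct parameters. A secondary technicality is ensuring that the enlargement $I_s^{(k)}$ (defined at the coarsest scale $2^{-k_j}$) is compatible with the finer convolution scales $2^{-k_{j\ell}}$ for $\ell=1,2$; this is automatic because $k_j=\min_\ell k_{j\ell}$ guarantees that each $\Phi_{\ell,j,n_\ell}$ is concentrated at a scale no larger than $|I_s|$.
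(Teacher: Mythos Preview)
Your decomposition of $\bS\setminus\bS(\Omega)$ by depth in $\Omega$ is exactly the strategy the paper uses (there the layers are written as $\bS_L(\Omega)=\{s:2^LI_s\subseteq\Omega,\ 2^{L+1}I_s\nsubseteq\Omega\}$), and your identification of the $f_3$-leakage as the source of decay is correct. However, the execution has a real gap.

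The claim ``for $x\in I_s^{(k)}$, Schwartz decay of $\Phi_{3,j,n_3}$ gives $|f_{3,j,n_3}(x)|\leq C_N 2^{-Nk}$'' is false as stated: by your own construction $I_s^{(k)}$ must reach $\Omega^c$ (you need this to pick $y_s\in I_s^{(k)}\cap\Omega^c$), and near $\Omega^c$ the function $f_{3,j,n_3}$ can be of size $1$. So the two roles you assign to $I_s^{(k)}$ --- carrying the $f_3$ decay and touching $\Omega^c$ for the $M_p$-control on $f_1,f_2$ --- are in tension. Relatedly, the H\"older step with exponents $(p,p,\infty)$ cannot give $|F_1|^{1/p}|F_2|^{1/p}$ when $1<p<2$ (which is the regime of the lemma), since $2/p>1$; your suggested ``interpolation with $|f_\ell|\le 1$'' only produces $|F_\ell|^{1/q}$ for some $q\ge 2$, which is the wrong power when $|F_\ell|<1$.

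The paper resolves both issues at once by working with pointwise bounds valid on all of $\mathbb R$ rather than localizing and using H\"older. From the definition of $\Omega$ one gets the \emph{growth} estimate
\[
|f_{\ell,j,n_\ell}(x)|\le C\min\{1,|F_\ell|^{1/p}\}\bigl(1+2^{k_{j\ell}}\,\mathrm{dist}(x,\Omega^c)\bigr)^2,\qquad \ell=1,2,
\]
while $f_3$ supported in $\Omega^c$ gives the decay
\[
|\Id^*_{j,n}(x)\,f_{3,j,n_3}(x)|\le \frac{C_N}{(1+2^{k_j}\mathrm{dist}(x,I_s))^{3N}(1+2^{k_{j3}}\mathrm{dist}(x,\Omega^c))^{3N}}.
\]
The key structural fact is $2^{k_{j3}}\sim\max_\ell 2^{k_{j\ell}}$, so the $f_3$ decay (at the finest scale) dominates the combined growth of $f_1,f_2$; multiplying and integrating gives $C_N 2^{-NL}\prod_{\ell=1}^2\min\{1,|F_\ell|^{1/p}\}|I_s|$ for each $s\in\bS_L(\Omega)$. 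Your worry about the $j$-sum is not an obstacle: the depth condition forces each $x\in\Omega$ to lie in $O(1)$ intervals $I_s$ with $s\in\bS_L(\Omega)$ (recall $j\in\mathbb Z(\gamma)$ forces $k_j$-gaps of size $\ge 2^{100}$), so $\sum_{s\in\bS_L(\Omega)}|I_s|\le C|\Omega|\le C$, and summing over $L$ finishes.
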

\begin{proof}
Notice that if $s=(j, n)\in \bS(\Omega)^c$, then $I_s\subseteq \Omega$.
Let $\bS_{L}(\Omega)$ be defined  by
$$
\bS_L(\Omega) = \{s\in \bS(\Omega)^c: 2^LI_s\subseteq\Omega,
\,\, {\rm but}\,\, 2^{L+1}I_s\nsubseteq  \Omega  \}\,.
$$
We see that $\bS(\Omega)^c =\cup_{L=0}^\infty \bS_L(\Omega)$.
Let ${\mathcal J}_L$ be the set of all time intervals $I_s$'s for
$s\in \bS_L(\Omega)$. It is easy to see that ${\mathcal J}_L$ is
a collection of disjoint intervals and
$\sum_{J\in {\mathcal J}_L}|J|\leq |\Omega|< 1$. Hence,
it suffices to show that for any $J\in {\mathcal J}_L$ and
any $(j,n)=s\in \bS_L(\Omega)$ such that $I_s=J$, we have
\begin{equation}\label{diffest2}
\bigg| \int\Id^*_{j,n}(x)\prod_{\ell}
   f_{\ell, j, n_\ell} (x) dx\bigg|
\leq  {C 2^{-L}\min\big\{1, |F_1|^{1/p}\big\}\min\big\{1,
|F_2|^{1/p}\big\}} |J| \,,
\end{equation}
where $C$ is a constant independent of $f_1, f_2, f_3, M_1, M_2$,
since (\ref{triest112}) follows by summing all $L$'s and $J$'s
together.

We now prove (\ref{diffest2}).
Since $F'_3=F_3\backslash \Omega$ and $f_3\in X({F'_3})$, we get
for any $(j,n)\in \bS$ and any positive integer $N$,
\begin{equation}\label{estf3}
\big| \Id^*_{j,n}(x)f_{3,j,n_\ell}(x)\big|\leq
\frac{C_N}
{  \big(1+ 2^{k_{j}}{\rm dist}(x, I_s)\big)^{3N}  \big(1+ 2^{k_{j3}}{\rm dist}(x, \Omega^c)\big)^{3N}}\,.
\end{equation}
Clearly we have for $\ell\in\{1,2\}$ and $(j,n)\in\bS$,
\begin{equation}\label{estfl1}
\big| f_{\ell,j, n_\ell}(x)\big|\leq
       \int\!\frac{C_N|f_\ell(y)| 2^{k_{j\ell}}\, dy}
          { \big(1+ 2^{k_{j\ell}} |x-y| \big)^N}             \,.
\end{equation}
By the definition of $\Omega$, we have
for $\ell\in\{1,2\}$ and $(j,n)\in\bS$,
\begin{equation}\label{estfl2}
\big| f_{\ell, j, n_\ell}(x)\big|\leq {C_N \min\big\{1,
|F_\ell|^{1/p}\big\}\big(1+ 2^{k_{j\ell}}{\rm dist}(x, \Omega^c)
\big)^2}    \,.
\end{equation}
Thus (\ref{estf3}), (\ref{estfl2}) and the fact
$2^{k_{j3}}\sim 2^{\max\{k_{j\ell}\}}$ yield that
the left hand side of (\ref{diffest2}) is no more than
$$C_N 2^{-LN} \prod_{\ell=1}^2\min\big\{1, |F_\ell|^{1/p}\big\}    |J| $$
for any positive integer $N\geq 2$, which is the desired estimate.
\end{proof}

Hence, to prove (\ref{triestp}), we only need to prove the following
lemma for $\La_{\bS(\Omega)}$. The details of the proof of Lemma
\ref{ftriest} will be given in the next few subsections.


\begin{lemma}\label{ftriest}
Let $n_1=n_2=0$, $1<p <2$, $F_3\subset \mathbb R $, and
$\bS(\Omega)$ be the set defined in (\ref{defofbsomega}) and
$F'_3=F\backslash\Omega$. For all $p_1, p_2\geq p$ with
$1/p_1+1/p_2\geq 1 $, and all functions $f_1\in X(F_1)$, $f_2\in
X(F_2)$, $f_3\in X(F'_3)$, the following inequality holds.
\begin{equation}\label{triest1121}
\big|\La_{\bS(\Omega)}(f_1, f_2, f_3)\big| \leq
C|F_1|^{1/{p_1}}|F_2|^{1/{p_2}}\,,
\end{equation}
where $C$ is a constant independent of $\bS$, $F_1$, $F_2$, $F_3$,
$f_1, f_2$, $f_3, M_1, M_2$.
\end{lemma}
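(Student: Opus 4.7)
The plan is to run the standard time-frequency tree decomposition, specialized to the three-size setup in Definition~\ref{defofsizec1}/\ref{defofsize2b}. The whole argument splits into three building blocks: a single-tree estimate, size/counting inequalities, and a geometric summation.

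First I would prove the single-tree estimate: for every tree $\bT \subset \bS(\Omega)$,
\begin{equation*}
|\La_\bT(f_1,f_2,f_3)| \leq C\, \size_1(\bT)\,\size_2(\bT)\,\size_3(\bT)\,|I_\bT|.
\end{equation*}
On the spatial side $\Id^*_{j,n}$ localizes to $I_s$, so the integrand is essentially supported on $I_\bT$. In the easy regime $|\w_{2,j}| \leq |\w_{1,j}|/6$, three Heisenberg boxes of comparable scales occur only for $\ell = 2$, and Cauchy--Schwarz in $(j,n)$ pulls out the $\ell^2$ square functions $\Delta^*_1(\bT)$ and $\Delta^*_3(\bT)$, while $\Delta^*_2(\bT)$ degenerates to a single pointwise factor at the top, controlled by the semi-norm $\|f_2\|_{j_\bT,n_\bT}$ in $\size_2$. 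In the delicate regime $|\w_{2,j}| > |\w_{1,j}|/6$ the frequency windows of $f_2$ and $f_3$ are comparable, so the naive Littlewood--Paley orthogonality for two of the three factors fails; this is precisely what the telescoping-style differences $f_{\ell,j,\bT} - f_{\ell,j-L,\bT}$ and $f_{\ell,j,n_\ell} - f_{\ell,j,0}$ inside $\Delta^*_\ell(\bT)$ in \eqref{defdelta2} recover, after one replaces the unstructured piece by a single low-frequency evaluation at the top whose size is absorbed in $\|f_\ell\|_{j_\bT,n_\bT,\zeta}$ (this is why Definition~\ref{defzetanorm} carries the $\zeta(j,M,K)$ indices). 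The remaining Schwartz tails from $\Id^{**}_{j,n}$ versus $\ti\Id^*_{j,n}$ are absorbed as in the usual John--Nirenberg argument.

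Next I would establish the size estimates. For the third slot, $f_3 \in X(F'_3)$ with $F'_3 \subset F_3 \setminus \Omega$, and the definition of $\Omega$ forces $M\Id_{F_3} \lesssim 1$ on any $I_s \not\subset \Omega$, so a direct pointwise estimate on $\Id^{**}_{j,n} f_{3,j,0}$ using the Schwartz decay of $\Phi_{3,j}$ gives $\size^*_3(\bS(\Omega)) \leq C$. For $\ell \in \{1,2\}$, the analogous computation against $M_p(M\Id_{F_\ell})$ plus Littlewood--Paley yields $\size^*_\ell(\bS(\Omega)) \leq C |F_\ell|^{1/p}$. The selection lemma then states that, whenever $\size^*_\ell(\bP) \leq 2\sigma$, one can extract a family of maximal trees from $\bP$ whose count $\ct$ is bounded by $C\sigma^{-p}\|f_\ell\|_p^p \leq C\sigma^{-p}|F_\ell|$; this is proved by the standard top-selection algorithm followed by Bessel-type duality against the $\ell^2$ square function inside $\Delta_\ell(\bT)$. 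Here the $D f_{\ell,j,n_\ell}$ term in \eqref{defseminorm} and the extra $\zeta$ evaluations in \eqref{defseminormzeta} are what allow the Bessel step to survive in the overlapping-frequency regime, uniformly in $M_1,M_2,L_1,L_2$.

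Finally I would run the geometric summation. Split $\bS(\Omega) = \bigsqcup_{n_1,n_2,n_3 \geq 0} \bP_{n_1,n_2,n_3}$ so that $\size^*_\ell$ is of order $2^{-n_\ell}$ on each piece (with $n_3 \geq 0$ thanks to the size estimate for $f_3$, and $n_\ell \geq -\log|F_\ell|^{1/p}$ for $\ell = 1,2$). Each piece is a union of trees, and the single-tree estimate gives
\begin{equation*}
|\La_{\bP_{n_1,n_2,n_3}}| \leq C\, 2^{-n_1-n_2-n_3}\, \ct(\bP_{n_1,n_2,n_3}).
\end{equation*}
Interpolating the three available count bounds $\ct(\bP_{n_1,n_2,n_3}) \leq C\,2^{n_\ell p}|F_\ell|$ (with the convention $|F_3|=1$) with exponents $\theta_1,\theta_2,\theta_3 \geq 0$ summing to $1$, and then summing the resulting geometric series in $n_1,n_2,n_3$, produces $\prod_\ell |F_\ell|^{\theta_\ell}$ whenever the exponents of $2^{-n_\ell}$ are strictly positive, i.e.\ whenever $\theta_\ell < 1/p$ for each $\ell$. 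For any target $p_1,p_2 \geq p$ with $1/p_1 + 1/p_2 \geq 1$ one can choose $(\theta_1,\theta_2,\theta_3) = (1/p_1, 1/p_2, 1 - 1/p_1 - 1/p_2)$ (making $\theta_3 \in [0,1)$), which lies in the admissible region since $p > 1$, and this yields exactly \eqref{triest1121}. I expect the main obstacle to be the single-tree estimate in the overlapping-frequency case $|\w_{2,j}| > |\w_{1,j}|/6$: verifying that the combination of $\ell^2$-telescoping in \eqref{defdelta2} and the $\zeta$ semi-norm in \eqref{defseminormzeta} really does pay for the loss of straightforward orthogonality, while keeping every constant independent of $M_1,M_2$.
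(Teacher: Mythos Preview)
Your overall architecture (single-tree estimate $+$ selection lemma $+$ geometric summation) is the same as the paper's, and you correctly flag the overlapping-frequency regime $|\w_{2,j}|>|\w_{1,j}|/6$ as the hard case, with telescoping and the $\zeta$ semi-norm doing the work. However, two concrete points separate your outline from a working proof.

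\textbf{The final summation does not close.} You propose $(\theta_1,\theta_2,\theta_3)=(1/p_1,1/p_2,1-1/p_1-1/p_2)$ and claim $\theta_3\in[0,1)$. But the hypothesis of the lemma is $1/p_1+1/p_2\ge 1$, so in fact $\theta_3\le 0$, and your convex interpolation of the three count bounds is unavailable precisely in the below-$L^1$ range that the lemma is about. The paper avoids this by dropping the third size altogether: Lemma~\ref{sizelem} gives the \emph{two}-size tree estimate $|\La_\bT|\le C\,\size^*_1(\bT)\,\size^*_2(\bT)\,|I_\bT|$ (the contribution of $f_3$ is absorbed via $\|f_3\|_\infty\le 1$ and $\|\Delta_3(\bT)\|_{p'}\le C|I_\bT|^{1/p'}$). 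Correspondingly the organization is run with a \emph{single} dyadic parameter $\sigma$ (so that $\size^*_\ell\le \min\{1,\sigma|F_\ell|^{1/p}\}$ for $\ell=1,2$ and $\ct\le C\sigma^{-p}$), and the elementary sum $\sum_{\sigma\le 1}\sigma^{-p}\min\{1,\sigma|F_1|^{1/p}\}\min\{1,\sigma|F_2|^{1/p}\}$ yields \eqref{triest1121} directly. The three-parameter decomposition is not needed and is what creates your difficulty.

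\textbf{Convexity is essential and missing.} The tree estimate in the hard regime is proved for \emph{convex} trees only: the paper first splits $\bS(\Omega)=\bS^{(1)}(\Omega)\cup\bS^{(2)}(\Omega)$ (Lemma~\ref{convexitylem}) so that each tree lies in a single regime, then uses convexity through Lemma~\ref{shadowest} (boundary of shadows), Lemma~\ref{difftree}, and Lemma~\ref{diffest23}; these are exactly what makes the telescoping square functions in \eqref{defdelta2} summable. Your outline treats the two regimes as cases inside a single tree and does not invoke convexity, so as written the tree estimate does not go through. A minor related point: the selection (Lemma~\ref{prilem}) here does not need any Bessel-type duality; since there are no modulations, the count bound comes straight from the weak-type estimate for $M_p(Mf_\ell)$ via Lemma~\ref{Lpest}, and the algorithm is chosen so that maximal trees removed from a convex set are again convex.
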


\subsection{Principle Lemmas}

We now state some lemmata which will be used in proof of Lemma
\ref{ftriest}.

\begin{lemma}\label{Lpest}
Let  $1<q<\infty$, $\ell\in\{1,2,3\}$ and $\bT$ be a tree in $\bS$.
Then
\begin{equation}\label{Lpest1}
\big\|\Delta^*_\ell(\bT)\big\|_{q}\leq C \inf_{x\in
I_\bT}M_q(Mf_\ell)(x)|I_\bT|^{1/q}\,,
\end{equation}
\begin{equation}\label{Lpest2}
\size_\ell(\bT)\leq C \inf_{x\in I_\bT}M_p(Mf_\ell)(x)\,,
\end{equation}
where $C$ is a constant independent of $f_\ell, \bT$, $\bS$, $M_1,
M_2$.
\end{lemma}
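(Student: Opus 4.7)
The plan is to reduce both inequalities to a localized Littlewood--Paley square function estimate together with standard pointwise convolution bounds. The $\size$ inequality \eqref{Lpest2} will follow from the square function inequality \eqref{Lpest1} (for the first term in the definition of $\size_\ell$) and from direct pointwise estimates for the semi-norm $\|f_\ell\|_{j_\bT,n_\bT}$ or its $\zeta$-version (for the second term). So the main work is \eqref{Lpest1}.

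Step one is pointwise localization. The weights $\Id^{**}_{j,n}(x)$ are non-negative bumps adapted to $I_{k_j,n}\subseteq I_\bT$ with rapid off-diagonal decay of order $(1+2^{k_j}\mathrm{dist}(x,I_{k_j,n}))^{-200}$. For each fixed $j$, the intervals $\{I_{k_j,n}:(j,n)\in\bT\}$ are disjoint and all contained in $I_\bT$, so summing shows
$$
\sum_{n:\,(j,n)\in\bT}\Id^{**}_{j,n}(x)\le C\,\Phi_{I_\bT}(x),\qquad \Phi_{I_\bT}(x)=\bigl(1+\mathrm{dist}(x,I_\bT)/|I_\bT|\bigr)^{-100},
$$
uniformly in $j$. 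Combined with $\Id^{**}_{j,n}\le C$, this yields the pointwise bound
$$
\Delta^{*}_\ell(\bT)(x)^{2}\le C\,\Phi_{I_\bT}(x)\sum_{j\in\scl(\bT)}|g_{\ell,j}(x)|^{2},
$$
where $g_{\ell,j}$ is the relevant frequency piece: $f_{\ell,j,n_\ell}$ in case (a)/\eqref{defdelta21}, or a Littlewood--Paley difference $f_{\ell,j,\bT}-f_{\ell,j-L,\bT}$, etc., in case (b). In case (a) when $\ell=2$ the sum collapses to a single term, and the argument below simplifies accordingly.

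Step two is the localized Littlewood--Paley estimate. Split $f_\ell=f_\ell\chi_{C_0 I_\bT}+f_\ell\chi_{(C_0 I_\bT)^c}$ for a large constant $C_0$. For the inside part, the $L^{q}$ boundedness of the Littlewood--Paley square function associated to the frequency pieces $g_{\ell,j}$ (each $\widehat{g}_{\ell,j}$ is supported in a lacunary family of intervals; in case (b) one uses that differences of bumps on adjacent scales are also standard Littlewood--Paley pieces) gives
$$
\Bigl\|\bigl(\tsum_j|g_{\ell,j}^{\text{in}}|^{2}\bigr)^{1/2}\Bigr\|_{q}\le C\|f_\ell\chi_{C_0 I_\bT}\|_{q}\le C|I_\bT|^{1/q}\inf_{x\in I_\bT}M_q f_\ell(x).
$$
For the outside part, the rapid decay of the convolution kernels $\Phi_{\ell,j,n_\ell}$ (whose scale is $\ge 2^{-k_j}\gtrsim |I_\bT|^{-1}$) produces the pointwise bound $|g_{\ell,j}^{\text{out}}(x)|\le C_N(1+\mathrm{dist}(x,(C_0I_\bT)^c)/|I_\bT|)^{-N}Mf_\ell(y_0)$ for any $y_0\in I_\bT$; summing in $j$ and pairing against $\Phi_{I_\bT}$ in the $L^q$ norm gives the same bound. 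Combining the two pieces with the weight $\Phi_{I_\bT}$ from step one and bounding $M_qf_\ell\le M_q(Mf_\ell)$ proves \eqref{Lpest1}.

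Step three: the semi-norm terms in $\size_\ell$. The term $\|f_\ell\|_{j_\bT,n_\bT}$ is, up to the factor $|I_\bT|^{-1/p}$, the $L^p$ norm of $\Id^{**}_{j_\bT,n_\bT}$ times a single frequency piece of $f_\ell$ (and a scaled derivative), to which the step-two argument applies verbatim with the trivial square function of one term; this yields a bound by $\inf_{x\in I_\bT}M_p(Mf_\ell)(x)$. For $\ell\in\{2,3\}$ the $\zeta$-semi-norm adds supremum over finitely many additional frequency shifts $\zeta(j,M,K)$ (since $0\le M\le 6L$, $|K|\le 10L$, and $L=2^{100}$ is fixed), each giving another frequency piece of $f_\ell$ bounded pointwise by $C\,Mf_\ell$ via standard convolution estimates; taking $L^p$ norm against $\Id^{**}_{j_\bT,n_\bT}$ contributes only a bounded number of terms, each with the same $M_p(Mf_\ell)$ bound.

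The main obstacle is the localized Littlewood--Paley estimate in step two: the usual theorem is global in $L^q$, while \eqref{Lpest1} demands the maximal function evaluated at a point in $I_\bT$. The inside/outside decomposition with Fefferman--Stein on the inside and rapid kernel decay on the outside is the standard remedy, but in case (b) one must verify that the Littlewood--Paley inequality still applies to the difference pieces $f_{\ell,j,\bT}-f_{\ell,j-L,\bT}$ and to $f_{\ell,j,n_\ell}-f_{\ell,j,0}$ with a constant independent of $L_1,L_2,M_1,M_2$; this follows because the Fourier supports of these differences still form a lacunary family (after rescaling) with uniformly controlled Littlewood--Paley constants.
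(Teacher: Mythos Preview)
Your overall scheme---pointwise domination by a localized Littlewood--Paley square function, then an inside/outside split of $f_\ell$ relative to $C_0I_\bT$---is exactly the strategy the paper uses, and Steps one and three are fine. The one genuine gap is in the outside piece of Step two. The bound you write,
\[
|g_{\ell,j}^{\text{out}}(x)|\le C_N\bigl(1+\mathrm{dist}(x,(C_0I_\bT)^c)/|I_\bT|\bigr)^{-N}Mf_\ell(y_0),
\]
has no dependence on the scale $j$, so ``summing in $j$'' over the (potentially infinite) set $\scl(\bT)$ diverges. The actual decay is at the kernel scale $2^{-k_{j\ell}}$, namely $|g_{\ell,j}^{\text{out}}(x)|\le C_N(1+2^{k_{j\ell}}\mathrm{dist}(x,(C_0I_\bT)^c))^{-N}Mf_\ell(x)$; since $2^{k_{j\ell}}\ge 2^{k_j}\ge |I_\bT|^{-1}$ and the $k_j$ are lacunary, this \emph{does} sum in $j$ for $x$ well inside $C_0I_\bT$, and the weight $\Phi_{I_\bT}$ from Step one handles $x$ far away. (Your parenthetical ``whose scale is $\ge 2^{-k_j}\gtrsim |I_\bT|^{-1}$'' mixes spatial and frequency scales; the point is that the \emph{spatial} scale $2^{-k_{j\ell}}\le 2^{-k_j}$ shrinks as $j$ grows.)

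The paper's treatment of the outside part is organized slightly differently and avoids this pitfall: it keeps the weights $\Id^{**}_{j,n}$ rather than bounding them all by $\Phi_{I_\bT}$, obtains a per-tile pointwise bound
\[
\big|\Id^{**}_{j,n}(x)\,(f_\ell\Id_{(2I_\bT)^c})*\Phi_{\ell,j,0}(x)\big|\le \frac{C\,Mf_\ell(x)}{(1+2^{k_j}\mathrm{dist}(x,I_s))^{50}(1+2^{k_j}\mathrm{dist}(I_s,(2I_\bT)^c))^{50}},
\]
then takes the $L^q$ norm tile by tile and sums via the triangle inequality, using that $\sum_{s\in\bT}|I_s|^{1/q}(1+|I_s|^{-1}\mathrm{dist}(I_s,(2I_\bT)^c))^{-25}\le C|I_\bT|^{1/q}$. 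Either route works once the scale-dependent decay is retained; just make that correction in your Step two.
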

\begin{proof}
(\ref{Lpest1}) is a consequence of the following $L^q$ estimates of
$\Delta_\ell(\bT)$.
\begin{equation}\label{Lpest12}
\big\|\Delta^*_\ell(\bT)\big\|_q\leq C\|f_\ell\|_q\,.
\end{equation}
In fact, one can decompose $f_\ell$ into $f_\ell\Id_{2I_\bT}$ and
$f_\ell\Id_{(2I_\bT)^c}$. For the first function, apply
(\ref{Lpest12}) to get the desired estimates. For the second
function, the desired estimates follow by the fast decay due to
$\Delta^*_\ell(\bT)$ is essentially supported on $I_\bT$.

Note that we consider only the case $n_\ell=0$. For $n_\ell\neq 0$,
the following argument still works if one changes the constant $C$
to $C(1+|n_\ell|)^{5}$.  We only give the details for the case
$|\w_{2,j}|\leq |\w_{1,j}|/2$ and $\ell\in\{1,3\}$ since other cases
can be done in the same way. In this case, we have
$$
\Delta^*_\ell(\bT)(x)=\bigg(\sum_{(j,n)\in\bT}\big|\Id_{j,n}^{**}f_{\ell,
j, 0}(x)\big|^2\bigg)^{1/2}\,.
$$
Notice that $\Delta^*_\ell(\bT)(x)$ is dominated by
$$
\bigg(\sum_{j\in\mathbb Z}\big|f_{\ell, j,0}(x)\big|^2
\bigg)^{1/2}\,,
$$
where $f_{\ell, j,0}$ is defined by $\wh f_{\ell, j,0} = \wh f_\ell
\wh\Phi_{\ell, j,0}$. Note that $\wh\Phi_{\ell,j,0}$ is supported on
$\w_{\ell,j}$ and $\w_{\ell,j}$'s are disjoint. Thus the Littlewood-Paley theorem then yields the $L^q$ estimates (\ref{Lpest12}). To get
(\ref{Lpest1}), it suffices to show that
$$
\big\|\Delta^*_{\ell, {\rm out}}(\bT)\big\|_{q}\leq C \inf_{x\in
I_\bT}M_q(Mf_\ell)(x)|I_\bT|^{1/q}\,,
$$
where $\Delta^*_{\ell,{\rm out}}(\bT)$ is defined by
 $$ \Delta^*_{\ell, {\rm
 out}}(\bT)(x)=\bigg(\sum_{(j,n)\in\bT}\big|\Id_{j,n}^{**}(x)
 \big((f\Id_{(2I_\bT)^c})*\Phi_{\ell,
j, 0}\big)(x)\big|^2\bigg)^{1/2}\,.$$ By the definition of
$\Id^{**}_{j, n}$ and $\Phi_{\ell, j,0}$, we have that for any
positive integer $N$,
$$
\big|\Id_{j,n}^{**}(x)
 \big((f\Id_{(2I_\bT)^c})*\Phi_{\ell,
j, 0}\big)(x)\big| \leq \frac{C_N}{\big(1+ 2^{k_j}{\rm dist}(x,
I_s)\big)^{100}} \int_{(2I_{\bT})^c}
\frac{|f_\ell(y)|2^{k_{j\ell}}}{\big(1+2^{k_{j\ell}}|x-y|\big)^N}dy\,.
$$
which is clearly dominated by
$$
\frac{CMf_\ell(x)}{\big(1+ 2^{k_j}{\rm dist}(x, I_s)\big)^{50}
\big(1+ 2^{k_j}{\rm dist}(I_s, (2I_\bT)^c)\big)^{50} }\,.
$$
Thus for $s\in \bT$,
$$
\big\|\Id_{j,n}^{**} \big((f\Id_{(2I_\bT)^c})*\Phi_{\ell, j, 0}\big)
\big\|_q^q\leq \frac{C|I_s|}{\big(1+ 2^{k_j}{\rm dist}(I_s,
(2I_\bT)^c)\big)^{25q}} \big(\inf_{x\in
I_\bT}M_q(Mf_\ell)(x)\big)^q\,.
$$
By triangle inequality, we obtain that
$$
\big\|\Delta^*_{\ell, {\rm out}}(\bT)\big\|_q\leq \sum_{s\in \bT}
\frac{C|I_s|^{1/q}}{\big(1+ |I_s|^{-1}{\rm dist}(I_s,
(2I_\bT)^c)\big)^{25}} \inf_{x\in
I_\bT}M_q(Mf_\ell)(x)\,,
$$
which yields the desired estimate (\ref{Lpest1}). Notice that
$$
\big\| \Id^{**}_{j_\bT,n_\bT}f_{\ell,j_\bT,n_\ell} \big\|_p
 +
\big\| {2^{-k_{j_\bT\ell}}}\Id^{**}_{j_\bT,n_\bT}
 Df_{\ell,j_\bT,n_\ell} \big\|_p
 \leq \bigg\| \frac{CMf_\ell(\cdot)}{\big(1+ |I_\bT|^{-1} {\rm dist}(\cdot, I_\bT)\big)^N}
 \bigg\|_p\,,
$$
which is clearly dominated by $\inf_{x\in
 I_\bT}M_p(Mf_\ell)(x)|I_\bT|^{1/p} $. Therefore we obtain
 (\ref{Lpest2}).
\end{proof}

\begin{lemma}\label{goodbmo1}
Suppose that  $s=(j,n)\in\bS$.

If $2^{k_{j\ell}}\sim 2^{k_j}$, then
\begin{equation}\label{smallbound}
\big\|\Id^{**}_{j,n} f_{\ell,j,n_\ell}\big\|_\infty \leq
C\big\|f_\ell\big\|_{j,n}
\end{equation}
holds for $\ell\in\{1,2,3\}$, where $C$ is a constant independent of
$s,  f_\ell, n_\ell$.

If $2^{k_{j1}}\sim 2^{k_j}$, then
\begin{equation}\label{smallboundzeta}
\big\|\Id^{**}_{j,n} f_{\ell,\zeta(j,M, K),n_\ell}\big\|_\infty \leq
C\big\|f_\ell\big\|_{j,n,\zeta}
\end{equation}
holds for $\ell\in\{2,3\}$, where $\zeta(j,M, K) $ is defined in
Definition \ref{defzetanorm} and $C$ is a constant independent of
$s,  f_\ell, n_\ell, \zeta, M, K$.
\end{lemma}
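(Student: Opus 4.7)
The strategy is a one-dimensional Sobolev-type embedding applied to the smoothly localized function $g = \Id^{**}_{j,n}f_{\ell,j,n_\ell}$ (respectively $g = \Id^{**}_{j,n}f_{\ell,\zeta(j,M,K),0}$) at the length scale $|I_s| = 2^{-k_j}$. The window $\Id^{**}_{j,n}$ forces $g$ to look like a function concentrated on $I_s$, so Sobolev at scale $|I_s|$ applies and recovers exactly the two pieces appearing in the $(j,n)$ semi-norm, with no need to exploit the intrinsic frequency scale of $f_{\ell,j,n_\ell}$ itself.

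First I would derive the elementary 1D embedding
$$\|g\|_\infty \leq C_p\, L^{-1/p}\bigl(\|g\|_p + L\,\|g'\|_p\bigr)$$
from the identity $|g(x)|^p = p\int_{-\infty}^x |g|^{p-1}\,\mathrm{sgn}(g)\,g'\,dt$, H\"older's inequality, and weighted AM--GM in the parameter $L$, taking $L = |I_s|$. Next I would expand $g'$ via the product rule as $(D\Id^{**}_{j,n})f_{\ell,j,n_\ell} + \Id^{**}_{j,n}Df_{\ell,j,n_\ell}$. Differentiating under the integral defining $\Id^{**}_{j,n}$ and using $2^{k_j}|x-y|\leq (1+2^{2k_j}|x-y|^2)^{1/2}$ to absorb one extra factor of $|x-y|$ gives the pointwise bound $|D\Id^{**}_{j,n}(x)|\leq C\,2^{k_j}\Id^{**}_{j,n}(x)$, from which
$$|I_s|\,\|g'\|_p \leq C\,\bigl\|\Id^{**}_{j,n}f_{\ell,j,n_\ell}\bigr\|_p + |I_s|\,\bigl\|\Id^{**}_{j,n}Df_{\ell,j,n_\ell}\bigr\|_p.$$

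To finish (\ref{smallbound}) I would invoke the hypothesis $2^{k_{j\ell}}\sim 2^{k_j}$ to replace the weight $|I_s| = 2^{-k_j}$ on the derivative term by $2^{-k_{j\ell}}$, which matches the definition of $\|f_\ell\|_{j,n}$ and closes the estimate. For (\ref{smallboundzeta}) the same three steps apply verbatim with $f_{\ell,\zeta(j,M,K),0}$ in place of $f_{\ell,j,n_\ell}$; since the $\zeta$-semi-norm already carries the weight $|I_s|$ on the derivative, no rescaling between $|I_s|$ and $2^{-k_{j\ell}}$ is needed and the desired inequality drops out after taking the supremum over $M,K$ in the definition of $\|f_\ell\|_{j,n,\zeta}$. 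No step is deep; the mildly delicate point is that a Bernstein-type replacement of $Df_{\ell,\zeta(j,M,K),0}$ by $2^{L_\ell\zeta+M_\ell}f_{\ell,\zeta(j,M,K),0}$ would fail, since $2^{L_\ell\zeta+M_\ell}$ can be much larger than $|I_s|^{-1}$. The Sobolev approach circumvents this by working with the localized function $g$ at the length scale of the window $\Id^{**}_{j,n}$, so the Fourier scale of $f_{\ell,\zeta,0}$ never enters the estimate.
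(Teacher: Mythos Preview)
Your argument is correct and is in fact more direct than the paper's. The paper proceeds by first establishing a $BMO$ bound for $g=\Id^{**}_{j,n}f_{\ell,j,n_\ell}$ (splitting into the cases $|J|\ge|I_s|$ and $|J|<|I_s|$), then interpolates with the $L^p$ control contained in the semi-norm to obtain $\|g\|_q\le C\mu|I_s|^{1/q}$ for every $q\ge p$, and only at the end applies a Gagliardo--Nirenberg inequality $\|g\|_\infty\le\|g\|_{p'}^{1/2}\|g'\|_p^{1/2}$, where the required $L^{p'}$ information is supplied by that interpolation. Your proof short-circuits the $BMO$ detour: the identity $|g(x)|^p=p\int_{-\infty}^{x}|g|^{p-1}\operatorname{sgn}(g)\,g'$ together with H\"older gives $\|g\|_\infty^{\,p}\le p\,\|g\|_p^{\,p-1}\|g'\|_p$ directly, which after the AM--GM rescaling at length $L=|I_s|$ and the product-rule computation yields exactly the two $L^p$ pieces in the semi-norm. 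The hypothesis $2^{k_{j\ell}}\sim 2^{k_j}$ enters precisely where you say, to trade the window-scale weight $|I_s|$ for the frequency-scale weight $2^{-k_{j\ell}}$ on the derivative term; for the $\zeta$-part no such trade is needed because the $\zeta$-semi-norm already carries $|I_s|$. One remark: later in the paper (proof of Lemma~\ref{goodbmo2}) the $L^q$ bounds $\|\Id^{**}_{j,n}f_{\ell,j,n_\ell}\|_q\le C\mu|I_s|^{1/q}$ are quoted ``by Lemma~\ref{goodbmo1}''. In the paper's route these arise as an intermediate step; in your route they follow immediately from your $L^\infty$ bound and the $L^p$ bound by interpolation, so nothing is lost downstream.
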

\begin{proof}
We only prove (\ref{smallbound}) since (\ref{smallboundzeta})
essentially  is a consequence of (\ref{smallbound}).
 Let $\mu=\big\|f_\ell\big\|_{j,n}$.
 By the definition of the semi-norm, we have
\begin{equation}\label{smalldelta2}
\big\|\Id^{**}_{j,n} f_{\ell,j,n_\ell}\big\|_p +
\big\|{|I_{s}|}\Id^{**}_{j,n}
 Df_{\ell,j,n_\ell} \big\|_p \leq \mu |I_s|^{1/p}\,.
\end{equation}
 First we prove the BMO estimate for the function, that is
\begin{equation}\label{smallbmo}
\big\|\Id^{**}_{j,n} f_{\ell,j,n_\ell}\big\|_{BMO} \leq C\mu\,.
\end{equation}
If $|I_s|\leq |J|$, by (\ref{smalldelta2}) we have
$$\inf_c\int_J\big| \Id^{**}_{j,n}(x)f_{\ell,j,n_\ell}(x)-c\big| dx
  \, \leq \, \big\|\Id^{**}_{j,n} f_{\ell,j,n_\ell} \big\|_p|J|^{1-\frac{1}{p}}
 \,\leq \, \mu |I_s|^{\frac{1}{p}}|J|^{1-\frac{1}{p}}
  \, \leq \, \mu |J|\,.
$$
If $|I_s|\geq |J|$, by (\ref{smalldelta2}) we obtain that
\begin{eqnarray*}
 &  & \inf_c\int_J\big| \Id^{**}_{j,n}(x)f_{\ell,j,n_\ell}(x)-c\big| dx\\
 & \leq & |J|\int_J\bigg| \big(\Id^{**}_{j,n}f_{\ell,j,n_\ell}\big)' (x)\bigg| dx\\
 & \leq & |J|\int_J \big|\big(\Id^{**}_{j,n}\big)'(x)f_{\ell,j,n_\ell}(x)  \big|dx + |J|\int_J\big| \Id^{**}_{j,n}(x)Df_{\ell,j,n_\ell}(x) \big|dx\\
 & \leq & C|J||I_s|^{-1}\big\|\Id^{**}_{j,n}f_{\ell,j,n_\ell}\big\|_p|J|^{1-\frac{1}{p}} + |J|\big\| \Id^{**}_{j,n}Df_{\ell,j,n_\ell}\big\|_p
 |J|^{1-\frac{1}{p}}\\
 &\leq & C\mu |J|^{2-\frac{1}{p}} |I_s|^{\frac{1}{p}-1}\,\leq\, C\mu|J|\,.
\end{eqnarray*}
Thus we get the BMO estimate (\ref{smallbmo}). Interpolating (\ref{smallbmo})
and (\ref{smalldelta2}), we have for any $p\leq q <\infty$,
$$
\big\|\Id^{**}_{j,n}f_{\ell,j,n_\ell}\big\|_q\leq C\mu
|I_s|^{1/q}\,.
$$
Notice that an integration by parts and H\"older inequality yield that
$$
\big\|\Id^{**}_{j,n} f_{\ell,j,n_\ell}\big\|_\infty \leq
 \big\| \Id^{**}_{j,n} f_{\ell,j,n_\ell}  \big\|_{p'}^{1/2}
 \big\| \big( \Id^{**}_{j,n} f_{\ell,j,n_\ell}  \big)'  \big\|_p^{1/2}\,,
$$
where $1/p+1/p'=1$. Hence the desired estimate (\ref{smallbound})
 follows by (\ref{smalldelta2}) and $L^{p'}$ estimates for the functions.
\end{proof}

\begin{lemma}\label{goodbmo2}
Suppose that $2^{k_{j\ell}}\sim 2^{k_j}$ holds for
all $(j,n)\in \bS$.
Then for any tree $\bT $ in $\bS$, we have
\begin{equation}\label{smallBMOp}
\big\| \Delta_\ell(\bT) \big\|_{BMO} \leq C \size^*_\ell(\bT) \,,
\end{equation}
where $C$ is a constant independent of $ \bT, \bS, L_1, L_2, M_1, M_2,
 f_\ell, n_\ell$.
\end{lemma}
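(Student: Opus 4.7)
The strategy is to derive the BMO bound for $\Delta_\ell(\bT)$ from the tree-adapted $L^p$ control encoded in $\size^*_\ell(\bT)$, using the Heisenberg-box hypothesis $2^{k_{j\ell}} \sim 2^{k_j}$ to extract matching pointwise control on the individual tiles $\ti\Id^*_{j,n}\,f_{\ell,j,n_\ell}$ via Lemma \ref{goodbmo1}. Fix an interval $J \subset \mathbb R$ with center $x_J$. By Cauchy--Schwarz it suffices to exhibit a constant $c_J$ such that $\frac{1}{|J|}\int_J |\Delta_\ell(\bT)(x) - c_J|^2\,dx \leq C(\size^*_\ell(\bT))^2$.

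Split $\bT = \bT^{\text{big}} \cup \bT^{\text{sm}}$ with $\bT^{\text{big}} = \{s=(j,n) \in \bT : |I_s| \geq |J|\}$, and write $\Delta^{\text{big}}(x)^2$, $\Delta^{\text{sm}}(x)^2$ for the corresponding partial square-function sums so that $\Delta_\ell(\bT)^2 = (\Delta^{\text{big}})^2 + (\Delta^{\text{sm}})^2$. Setting $c_J := \Delta^{\text{big}}(x_J)$ and applying Minkowski one gets
$$
(\Delta_\ell(\bT)(x)-c_J)^2 \leq 2\Delta^{\text{sm}}(x)^2 + 2\sum_{(j,n)\in \bT^{\text{big}}}\big|\ti\Id^*_{j,n}f_{\ell,j,n_\ell}(x)-\ti\Id^*_{j,n}f_{\ell,j,n_\ell}(x_J)\big|^2.
$$
The large-scale sum is easy: at each scale $j$ at most one $(j,n)\in\bT^{\text{big}}$ has $I_{(j,n)}\supset J$, and for such $s$ the derivative portion of $\|f_\ell\|_{j,n}$ (controlled by $\size^*_\ell(\bT)$), the hypothesis $2^{-k_{j\ell}}\sim |I_s|$, and the $L^\infty$ bound of Lemma \ref{goodbmo1} applied to $Df_{\ell,j,n_\ell}$ give $\|(\ti\Id^*_{j,n}f_{\ell,j,n_\ell})'\|_\infty \leq C|I_s|^{-1}\size^*_\ell(\bT)$; hence $|\ti\Id^*_{j,n}f_{\ell,j,n_\ell}(x)-\ti\Id^*_{j,n}f_{\ell,j,n_\ell}(x_J)| \leq C(|J|/|I_s|)\size^*_\ell(\bT)$ on $J$. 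Contributions from $s$ with $I_s\not\supset J$ are rapidly decaying thanks to the strong cutoff exponent $2^{100}$ in $\ti\Id^*_{j,n}$. Summing $(|J|/|I_s|)^2$ over geometric $|I_s|\geq |J|$ converges to $\leq C(\size^*_\ell(\bT))^2$.

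The heart of the proof is the small-scale estimate $\int_J \Delta^{\text{sm}}(x)^2\,dx \leq C|J|(\size^*_\ell(\bT))^2$. I would run a stopping-time decomposition: for each dyadic level $\lambda \geq \size^*_\ell(\bT)$, extract a maximal collection of sub-tree tops $s^*_i$ inside $\bT^{\text{sm}}$ on which a localized version of $\Delta^*_\ell$ exceeds $\lambda$ on at least half of $I_{s^*_i}$. The size bound $\|\Delta^*_\ell(\bT_{s^*_i})\|_p \leq \size^*_\ell(\bT)\,|I_{s^*_i}|^{1/p}$ coupled with Chebyshev controls $\sum_i |I_{s^*_i}| \leq C(\lambda/\size^*_\ell(\bT))^{-p}|J|$, while the Heisenberg hypothesis $k_{j\ell} \sim k_j$ ensures via Lemma \ref{goodbmo1} that each tile $\ti\Id^*_{j,n}f_{\ell,j,n_\ell}$ is pointwise bounded by $C\size^*_\ell(\bT)$. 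The resulting layer-cake integral, together with geometric-decay bookkeeping for tiles whose $I_s$ lies outside a fixed neighbourhood of $J$ (handled by the $2^{100}$-exponent), then produces the claimed $L^2$ estimate on $\Delta^{\text{sm}}$.

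The main obstacle is precisely this small-scale step: it is a John--Nirenberg-type upgrade of the $L^p$-Carleson bound ($p<2$) provided by $\size^*_\ell$ into an $L^2$ (hence BMO) bound on the square function. The tile-wise $L^\infty$ control furnished by Lemma \ref{goodbmo1} under the Heisenberg hypothesis $k_{j\ell}\sim k_j$ is exactly what makes the upgrade possible; without that hypothesis there is no uniform $L^\infty$ control per tile and no BMO bound can be expected at the level of $\size^*_\ell(\bT)$.
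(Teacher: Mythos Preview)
Your treatment of the large-scale tiles ($|I_s|\ge |J|$) is fine and matches the paper's argument for that part. The real divergence from the paper, and the place where your sketch has a gap, is the small-scale piece.

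You choose to bound the $L^2$ mean oscillation on $J$, which forces you to upgrade the $L^p$ control encoded in $\size^*_\ell(\bT)$ (with $p<2$) to an $L^2$ bound on $\Delta^{\mathrm{sm}}$. Your proposed John--Nirenberg step does not work as written: from $\|\Delta^*_\ell(\bT_{s^*_i})\|_p\le \mu\,|I_{s^*_i}|^{1/p}$ together with ``$\Delta^*_\ell(\bT_{s^*_i})>\lambda$ on half of $I_{s^*_i}$'' Chebyshev gives only $\lambda\le 2^{1/p}\mu$; it tells you nothing about $\sum_i|I_{s^*_i}|$. If instead you apply Chebyshev to the full $\Delta^{\mathrm{sm}}$ over $J$ you get weak-$L^p$ decay of the superlevel sets, and $\int_0^\infty \lambda\,(\mu/\lambda)^p\,d\lambda$ diverges for $p<2$. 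A genuine John--Nirenberg iteration can be made to work (one must, at each stopping interval, peel off the coarser scales using the derivative/$L^\infty$ control from Lemma~\ref{goodbmo1} before re-applying the size bound to the finer tree), but this requires substantially more than what you wrote.

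The paper sidesteps all of this by bounding the $L^1$ mean oscillation $\inf_c\int_J|\Delta_\ell(\bT)-c|\,dx$ rather than the $L^2$ version. Then for the tiles with $I_s\subseteq J$ one simply writes, by H\"older,
\[
\int_J\Big(\sum_{s\in\bT_J}|\ti\Id^*_{j,n}f_{\ell,j,n_\ell}|^2\Big)^{1/2}dx
\ \le\ \big\|\Delta^*_\ell(\bT_J)\big\|_p\,|J|^{1-1/p}\ \le\ \mu\,|J|^{1/p}\,|J|^{1-1/p}\ =\ \mu\,|J|,
\]
using directly that $\bT_J$ is a (union of) tree(s) with top interval contained in $J$ and the definition of $\size^*_\ell$. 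No stopping time, no upgrade from $L^p$ to $L^2$, is needed. The tiles with $|I_s|\le|J|$ but $I_s\not\subseteq J$ are handled by the rapid decay of $\ti\Id^*_{j,n}$ together with the embedding $\ell^2\hookrightarrow\ell^p$. I would recommend switching to the $L^1$ formulation; it makes the small-scale estimate a one-line application of the size hypothesis.
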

\begin{proof}
We only give the a proof for $\ell=1$. Other cases can be handled in the same way. Let $\mu=\size^*_\ell(\bS)$.
Let $J$ be a dyadic interval and $\bT_J=\{s\in\bT: I_s\subseteq J\}$.
We then dominate $\inf_c\int_J\big|\Delta_\ell(\bT)(x) - c\big|dx$ by a sum of
the following three parts.
$$
  \int_J \bigg(\sum_{s\in \bT_J} \big|\ti\Id^{*}_{j,n}(x) f_{\ell,j,n_\ell}
(x) \big|^2\bigg)^{1/2} dx\,,
$$
$$
 \int_J \bigg(\sum_{\substack{s\in \bT\backslash\bT_J\\|I_s|\leq |J|}} \big|\ti\Id^{*}_{j,n}(x) f_{\ell,j,n_\ell}
(x) \big|^2\bigg)^{1/2} dx\,,
$$
and
$$
 \inf_c\int_J \bigg|\bigg(\sum_{\substack{s\in \bT\backslash\bT_J\\|I_s|>|J|}} \big|\ti\Id^{*}_{j,n}(x) f_{\ell,j,n_\ell}
(x) \big|^2\bigg)^{1/2}-c\bigg| dx\,.
$$
The first part is clearly dominated by $\mu |J|$ because of the H\"older inequality and the fact that $\mu$ is the $\ell$-${\rm size}^*$ of $\bS$.

Since $p\leq 2$ we estimate the second part by
\begin{eqnarray*}
  & &  \bigg\| \bigg(\sum_{\substack{s\in \bT\backslash\bT_J\\|I_s|\leq |J|}} \big|\ti\Id^{*}_{j,n} f_{\ell,j,n_\ell}\big|^2\bigg)^{1/2}   \bigg\|_{L^p(J)}|J|^{1-\frac{1}{p}}\\
 & \leq &  \bigg(\sum_{\substack{s\in \bT\backslash\bT_J\\|I_s|\leq |J|}} \big\|\ti\Id^{*}_{j,n} f_{\ell,j,n_\ell}\big\|^p_{L^p(J)}\bigg)^{1/p}
|J|^{1-\frac{1}{p}}\\
  & \leq   & \bigg(\sum_{\substack{s\in \bT\backslash\bT_J\\|I_s|\leq |J|}}
 \frac{ C
 \big\|\Id^{**}_{j,n} f_{\ell,j,n_\ell}\big\|^p_p  }
 {\big( 1+ |I_s|^{-1}{\rm dist}(J, I_s)\big)^{100}}\bigg)^{1/p}|J|^{1-\frac{1}{p}}\\
 &\leq & \mu \bigg(\sum_{\substack{s\in \bT\backslash\bT_J\\|I_s|\leq |J|}}
 \frac{ C |I_s| }
 {\big( 1+ |I_s|^{-1}{\rm dist}(J, I_s)\big)^{100}}   \bigg)^{1/p} |J|^{1-\frac{1}{p}}\,\, \leq \,\, C\mu|J|\,.
\end{eqnarray*}

The third part is estimated by
\begin{eqnarray*}
  &  &  \bigg(\inf_c\int_J \bigg|\bigg(\sum_{\substack{s\in \bT\backslash\bT_J\\|I_s|>|J|}} \big|\ti\Id^{*}_{j,n}(x) f_{\ell,j,n_\ell}
(x) \big|^2\bigg)^{1/2}-c\bigg|^2dx\bigg)^{1/2}|J|^{1/2}\\
  & \leq &\bigg(\inf_c\int_J \bigg|\sum_{\substack{s\in \bT\backslash\bT_J\\|I_s|>|J|}} \big|\ti\Id^{*}_{j,n}(x) f_{\ell,j,n_\ell}
(x) \big|^2-c\bigg|dx\bigg)^{1/2}|J|^{1/2}\\
  & \leq & C\bigg(\int_J \sum_{\substack{s\in \bT\backslash\bT_J\\|I_s|>|J|}} \bigg|
  \bigg(\big|\ti\Id^{*}_{j,n}(x) f_{\ell,j,n_\ell}
(x) \big|^2\bigg)'\bigg|dx  \bigg)^{1/2}|J|\,,
\end{eqnarray*}
which is dominated by a sum of the following two terms,
$$
R_1= C\bigg(\int_J \sum_{\substack{s\in
\bT\backslash\bT_J\\|I_s|>|J|}}
  |I_s|^{-1}\big|\ti\Id^{*}_{j,n}(x) f_{\ell,j,n_\ell}
(x) \big|^2 dx  \bigg)^{1/2}|J|\,,
$$
and
$$
R_2= C\bigg(\int_J \sum_{\substack{s\in
\bT\backslash\bT_J\\|I_s|>|J|}}\big|\ti\Id^{*}_{j,n}(x)f_{\ell,j,n_\ell}
(x) \big| \big|\ti\Id^{*}_{j,n}(x) Df_{\ell,j,n_\ell} (x) \big| dx
\bigg)^{1/2}|J|\,,
$$
By Lemma \ref{goodbmo1}, we see that for any
$q\geq p$,
$$
 \big\|\Id^{**}_{j,n}f_{\ell,j,n_\ell}\big\|_q\leq
 C\mu|I_\bT|^{1/q}\,.
$$
Thus, by H\"older inequality, the first term $R_1$ is estimated by
\begin{eqnarray*}
 &  & C\bigg( \sum_{\substack{s\in
\bT\backslash\bT_J\\|I_s|>|J|}}\frac{|I_s|^{-1}}{\big(1+|I_s|^{-1}{\rm
dist}(J, I_s)\big)^{100}}
\big\|\Id^{**}_{j,n}f_{\ell,j,n_\ell}\big\|_4^2 |J|^{1/2}
\bigg)^{1/2}|J| \\
 & \leq & C\mu\bigg( \sum_{\substack{s\in
\bT\backslash\bT_J\\|I_s|>|J|}}\frac{|I_s|^{-1/2}|J|^{1/2}}{\big(1+|I_s|^{-1}{\rm
dist}(J, I_s)\big)^{100}}\bigg)^{1/2}|J| \,\,\leq \, C\mu |J|\,,
\end{eqnarray*}
and the second term $R_2$ is estimated by
\begin{eqnarray*}
  &  &C\bigg(\sum_{\substack{s\in
\bT\backslash\bT_J\\|I_s|>|J|}}\big\|\ti\Id^{*}_{j,n}f_{\ell,j,n_\ell}
 \big\|_{L^{p'}(J)} \big\|\Id^{**}_{j,n}Df_{\ell,j,n_\ell}
 \big\|_p\bigg)^{1/2}|J|\\
 & \leq &C\bigg( \mu \sum_{\substack{s\in
\bT\backslash\bT_J\\|I_s|>|J|}}\frac{|I_s|^{\frac{1}{p}-1}
\big\|\Id^{**}_{j,n}f_{\ell,j,n_\ell}
 \big\|_{p'+1}|J|^{\frac{1}{p'(p'+1)}} }
{\big(1+|I_s|^{-1}{\rm dist}(J, I_s)\big)^{100}}
 \bigg)^{1/2}|J|\\
 & \leq & C\mu\bigg( \sum_{\substack{s\in
\bT\backslash\bT_J\\|I_s|>|J|}}\frac{|I_s|^{-\frac{1}{p'(p'+1)}}
|J|^{\frac{1}{p'(p'+1)}} } {\big(1+|I_s|^{-1}{\rm dist}(J,
I_s)\big)^{100}} \bigg)^{1/2}|J|\,\,\leq \,\, C\mu|J|\,.
\end{eqnarray*}
This completes the proof of (\ref{smallBMOp}).
\end{proof}

The principal lemma is the following organization lemma.
\begin{lemma}\label{prilem}
Let $\ell\in\{1,2, 3\}$ and ${\bf S}$ be a subset of $\mathbb
Z(\ga)\times \mathbb Z$. ${\bf S}$ can be partitioned to two
parts ${\bf S}_1$ and ${\bf S}_2$ such that ${\bf S}_1$ is a union
of maximal trees with
\begin{equation}\label{s1est}
\ct({\bf S}_1) \leq C\big(\size^*_\ell({\bf S})\big)^{-p}|F_\ell|\,,
\end{equation}
and
\begin{equation}\label{s2est}
\size^*_\ell({\bf S}_2)\leq \frac{1}{2}\size^*_\ell({\bf S})\,,
\end{equation}
where $C$ is a constant independent of $\bS, M_1, M_2, f_\ell,
F_\ell$.
\end{lemma}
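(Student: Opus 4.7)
The plan is the classical greedy stopping-time selection algorithm from time-frequency analysis (cf.\ \cite{MTT1,T}). Let $\mu := \size^*_\ell({\bf S})$ and set ${\bf P}_0 := {\bf S}$. At each stage $k \geq 1$, if $\size^*_\ell({\bf P}_{k-1}) \geq \mu/2$, pick a tree $\bT_k \subset {\bf P}_{k-1}$ with $\size_\ell(\bT_k) \geq \mu/2$ whose top time interval $I_k := I_{\bT_k}$ is of \emph{maximal} length (with ties broken leftmost); enlarge $\bT_k$ to the maximal tree $\bT^*_k \subset {\bf P}_{k-1}$ sharing the same top, and set ${\bf P}_k := {\bf P}_{k-1} \setminus \bT^*_k$. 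Terminate when $\size^*_\ell$ drops below $\mu/2$, and declare ${\bf S}_1 := \bigcup_k \bT^*_k$, ${\bf S}_2 := {\bf S} \setminus {\bf S}_1$. By construction, ${\bf S}_2$ satisfies (\ref{s2est}) and ${\bf S}_1$ is a disjoint union of maximal trees; the maximal-length tie-breaking rule additionally guarantees $\sum_k \mathbf{1}_{I_k}(x) \lesssim 1$ pointwise.

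For (\ref{s1est}), $\size_\ell(\bT^*_k) \geq \mu/2$ forces at least one of the two terms in (\ref{defofsizec1}) or (\ref{defofsize2b}) to exceed $\mu/4$. When the pointwise semi-norm dominates, Definition \ref{defseminorm} together with the Schwartz bound $|\Id^{**}_{j,n} f_{\ell,j,n_\ell}(x)| \leq C_N Mf_\ell(x)(1+|I_k|^{-1}{\rm dist}(x,I_k))^{-N}$ gives $\mu^p |I_k| \leq C\int_{\mathbb R} (Mf_\ell(x))^p(1+|I_k|^{-1}{\rm dist}(x,I_k))^{-Np}\,dx$; summing over $k$, using the bounded overlap of $\{I_k\}$ together with Hardy--Littlewood, produces $\mu^p \sum_k |I_k| \lesssim \|f_\ell\|_p^p \leq |F_\ell|$. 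When the square-function term dominates, $\|\Delta^*_\ell(\bT^*_k)\|_p^p \geq c\mu^p |I_k|$; disjointness of the $\bT^*_k$ inside ${\bf S}$, combined with the bounded overlap of $\{\Id^{**}_{j,n}\}_n$ at each fixed $j$, yields the pointwise comparison
\[
\sum_k \bigl(\Delta^*_\ell(\bT^*_k)(x)\bigr)^2 \leq C\bigl(Sf_\ell(x)\bigr)^2
\]
for the Littlewood--Paley square function $S$ adapted to $\{\w_{\ell,j}\}_j$. Using $\max_k a_k^p \leq (\sum_k a_k^2)^{p/2}$ together with $\sum_k \mathbf{1}_{I_k} \lesssim 1$ and the essential support of $\Delta^*_\ell(\bT^*_k)$ on $I_k$, one deduces the pointwise bound $\sum_k (\Delta^*_\ell(\bT^*_k))^p \mathbf{1}_{I_k} \lesssim (Sf_\ell)^p$; integrating and applying Littlewood--Paley for $1<p<\infty$ gives $\sum_k \|\Delta^*_\ell(\bT^*_k)\|_p^p \lesssim \|f_\ell\|_p^p \leq |F_\ell|$, hence $\mu^p \sum_k |I_k| \lesssim |F_\ell|$.

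The principal obstacle will be the Schwartz-tail bookkeeping in the square-function case: $\Delta^*_\ell(\bT^*_k)$ is only essentially, not exactly, supported on $I_k$, so one must verify that tails summed across the selected trees still feed into a single global Littlewood--Paley square function of $f_\ell$. The uniform BMO bound of Lemma \ref{goodbmo2} on $\Delta_\ell(\bT^*_k)$, combined with John--Nirenberg, is what permits the $L^p$ and $L^2$ norms on $I_k$ to be exchanged with only $p$-dependent constants, absorbing these tail interactions cleanly. The two cases (a)/(b) in the definitions of $\Delta^*_\ell(\bT)$ require slightly different bookkeeping---case (b) invokes the telescoped differences $f_{\ell,j,\bT} - f_{\ell,j-L,\bT}$ to secure lacunarity for $\ell = 2$---but the structural logic is identical.
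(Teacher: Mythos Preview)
Your selection algorithm is exactly the paper's, and the disjointness of the selected top intervals $I_k$ is correct. The divergence is entirely in how you establish (\ref{s1est}).

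The paper's argument here is a three-line application of the already-proved Lemma~\ref{Lpest}: from (\ref{Lpest2}) one has $\size_\ell(\bT^*_k)\leq C\inf_{x\in I_k}M_p(Mf_\ell)(x)$, so $\size_\ell(\bT^*_k)\geq\mu/2$ forces $I_k\subset\{M_p(Mf_\ell)>\mu/(2C)\}$. The disjointness of the $I_k$ and the weak-type $(p,p)$ bound for $M_p\circ M$ then give $\sum_k|I_k|\leq C\mu^{-p}\|f_\ell\|_p^p\leq C\mu^{-p}|F_\ell|$ immediately. All the analytic content was already absorbed into Lemma~\ref{Lpest}; nothing further is needed.

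By contrast, your direct route has a real gap. In the square-function case you need $\sum_k\|\Delta^*_\ell(\bT^*_k)\|_p^p\lesssim\|f_\ell\|_p^p$ with $p<2$. Disjointness of the $(j,n)$ across trees gives only the pointwise $\ell^2$ bound $\sum_k(\Delta^*_\ell(\bT^*_k))^2\leq C(Sf_\ell)^2$, and for $p<2$ the inequality between $\ell^p$ and $\ell^2$ sums runs the wrong way. Your fix via $\sum_k\mathbf{1}_{I_k}\lesssim 1$ only controls the pieces living on $I_k$; the Schwartz tails off $I_k$---which you correctly flag as the obstacle---are never actually summed, and invoking Lemma~\ref{goodbmo2} plus John--Nirenberg does not help here: that lemma bounds $\|\Delta_\ell(\bT^*_k)\|_{BMO}$ by $\size^*_\ell(\bT^*_k)\sim\mu$, which tells you nothing new about $\sum_k|I_k|$ and is moreover only stated under the hypothesis $2^{k_{j\ell}}\sim 2^{k_j}$, not available for arbitrary $\bS$. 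The cleanest repair is simply to quote (\ref{Lpest2}) and argue via the maximal-function level set, as the paper does.
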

\begin{proof}
Let ${\mathcal F}_0$ be the set of all trees $\bT\subset \bS$ such
that $\size_\ell(\bT) > \size^*_\ell(\bS)/2$. Recall that $I_\bT$ is
the time interval for the top of $\bT$.  Let $\mathcal I$ denote the
collection of all possible $I_\bT$'s for trees $\bT\in\mathcal F_0$.
Initially, set $\bS_1:=\emptyset$, $\mathcal I_{\rm stock}:=\mathcal
I$, and $\bS_{\rm stock}:=\bS$. Take a longest interval $J$ in
$\mathcal I_{\rm stock}$. By the defintion of $\mathcal I$, there
must be a tree $\bT\in \mathcal F_0$ whose top is $J$. Let $\ti\bT$
be the maximal tree in $\bS_{\rm stock}$ with the top $J$. Obviously
$\size_\ell(\ti\bT)\geq \size^*_\ell(\bS)/2$.
 We remove
this maximal tree from $\bS_{\rm stock}$.  Update $\bS_{\rm stock}:=
\bS_{\rm stock}\backslash \ti\bT$,  $\bS_1 :=\bS_1\cup\ti\bT$,
and  
$$\mathcal I_{\rm stock}:= \mathcal I_{\rm
stock}\backslash\{I\in\mathcal I_{\rm stock}: I\subseteq J\} \,.$$
Repeat this procedure until $\mathcal I_{\rm stock}=\emptyset$.
Clearly when this process terminates, $\bS_1$ is a union of a trees
$\ti\bT$'s and $I_{\ti\bT}$'s are disjoint due to the maximality of
trees. By (\ref{Lpest2}) and the size condition on $\ti\bT$, we have
$$
\inf_{x\in I_{\ti\bT}}M_p(Mf_\ell)(x)\geq \size^*_\ell(\bS)/2\,,
$$
which implies that
$$
\bigcup_{\ti\bT} I_{\ti\bT} \subseteq \big\{x\in \mathbb R:
 M_p(Mf_\ell)(x)\geq \size^*_\ell(\bS)/2\big\}\,.
$$
Thus the disjointness property of $I_{\ti\bT}$'s and  (weak)
$L^q$ estimates for $1\leq q\leq \infty $ of Hardy-Littlewood
maximal functions yield (\ref{s1est}).
Let $\bS_2=\bS\backslash \bS_1$. Clearly $\bS_2$ satisfies
(\ref{s2est}). Therefore we complete the proof of Lemma
{\ref{prilem}}.
\end{proof}

\subsection{The size estimate for a tree}

Let $\bS$ be a convex subset
of $\mathbb Z(\ga)\times \mathbb Z$. By the definition of
$\bS(\Omega)$ in (\ref{defofbsomega}), it is clear that $\bS(\Omega)$
is convex. Partition $\bS(\Omega)$ into two subsets $\bS^{(1)}(\Omega)$ and $\bS^{(2)}(\Omega)$, where
\begin{equation}\label{defofSgood}
\bS^{(1)}(\Omega) = \big\{(j,n)\in\bS(\Omega): |\w_{2,j}|\leq
 |\w_{1,j}|/6\big\}
\end{equation}
\begin{equation}\label{defofSbad}
\bS^{\rm (2)}(\Omega) = \big\{(j,n)\in\bS(\Omega): |\w_{2,j}|>
 |\w_{1,j}|/6\big\}\,.
\end{equation}
For any $(j,n)\in\bS^{(1)}(\Omega)$, $k_{j2}=k_{j}$ by the definition
of $k_j$. And for any $(j,n)\in\bS^{(2)}(\Omega)$,
$ 2^{k_{j1}}\sim 2^{k_j}$.

\begin{lemma}\label{convexitylem}
For $\kappa\in\{1, 2\}$, $\bS^{(\kappa)}(\Omega)$ is convex.
\end{lemma}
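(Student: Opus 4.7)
The plan is to observe that the two conditions partitioning $\bS(\Omega)$ into $\bS^{(1)}(\Omega)$ and $\bS^{(2)}(\Omega)$ depend only on the scale index $j$, not on the spatial index $n$. Since $|\w_{1,j}|=\tfrac{3}{2}\cdot 2^{L_1j+M_1}$ and $|\w_{2,j}|=2\cdot 2^{L_2j+M_2}$, the ratio
\[
\frac{|\w_{2,j}|}{|\w_{1,j}|}\;=\;\tfrac{4}{3}\cdot 2^{(L_2-L_1)j+M_2-M_1}
\]
is a monotone function of $j$ (strictly so when $L_1\neq L_2$, constant when $L_1=L_2$). Consequently the set $\mathcal J_1:=\{j\in\mathbb Z:|\w_{2,j}|\leq |\w_{1,j}|/6\}$ is a half-line of $\mathbb Z$, or all of $\mathbb Z$, or empty, and its complement $\mathcal J_2$ has the same shape. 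In particular each $\mathcal J_\kappa$ is order-convex in $\mathbb Z$: if $j_1,j_2\in\mathcal J_\kappa$ and $\min(j_1,j_2)\leq j\leq\max(j_1,j_2)$, then $j\in\mathcal J_\kappa$.

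Next I would check that the time-interval length $|I_s|=2^{-k_s}$ is a strictly decreasing function of $j$ alone. By construction of $\w_{3,j}$ one has $|\w_{3,j}|\sim\max(|\w_{1,j}|,|\w_{2,j}|)$, so
\[
2^{k_s}\;=\;\min_{\ell\in\{1,2,3\}}2^{k_{j\ell}}\;\sim\;\min(|\w_{1,j}|,|\w_{2,j}|).
\]
Both $|\w_{1,j}|$ and $|\w_{2,j}|$ are strictly increasing in $j$ since $L_1,L_2\ge 1$, hence so is their minimum, and therefore $|I_s|$ is strictly decreasing in $j$. Consequently, writing $s_i=(j_i,n_i)$, the chain $I_{s_1}\subseteq I_s\subseteq I_{s_2}$ forces $|I_{s_1}|\le|I_s|\le|I_{s_2}|$, which gives $j_2\le j\le j_1$.

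Putting these pieces together: fix $s_1,s_2\in\bS^{(\kappa)}(\Omega)$ and $s=(j,n)\in\mathbb Z(\ga)\times\mathbb Z$ with $I_{s_1}\subseteq I_s\subseteq I_{s_2}$. Convexity of $\bS(\Omega)$, recorded in the paper immediately after \eqref{defofbsomega}, gives $s\in\bS(\Omega)$. The scale-monotonicity from the previous paragraph places $j$ in the integer interval $[\min(j_1,j_2),\max(j_1,j_2)]$, and order-convexity of $\mathcal J_\kappa$ combined with $j_1,j_2\in\mathcal J_\kappa$ then yields $j\in\mathcal J_\kappa$, i.e., $s\in\bS^{(\kappa)}(\Omega)$.

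There is no serious obstacle here; the result is essentially a structural consistency check. The only point that deserves explicit verification is that $|I_s|$ is a strictly monotone function of $j$ alone, which is why I would record both that $|\w_{3,j}|\sim\max(|\w_{1,j}|,|\w_{2,j}|)$ (so that the minimum over $\ell\in\{1,2,3\}$ reduces to a minimum over $\ell\in\{1,2\}$) and that a minimum of strictly increasing functions of $j$ is itself strictly increasing.
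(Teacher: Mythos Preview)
Your proof is correct and follows essentially the same route as the paper's: both arguments use the convexity of $\bS(\Omega)$ to place $s$ there, then extract $j_2\le j\le j_1$ from the nesting $I_{s_1}\subseteq I_s\subseteq I_{s_2}$, and finally invoke the linearity (equivalently, monotonicity) of $j\mapsto (L_1j+M_1)-(L_2j+M_2)$ to conclude the defining inequality for $\bS^{(\kappa)}$. The only difference is presentational: the paper splits into cases according to which of $|\w_{1,j}|,|\w_{2,j}|$ realizes $2^{k_j}$ and argues the scale comparison in each case separately, whereas you abstract this into the single observation that $k_j\sim\log_2\min(|\w_{1,j}|,|\w_{2,j}|)$ is strictly increasing because a minimum of strictly increasing functions is strictly increasing, which is cleaner and handles both $\kappa$ at once.
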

\begin{proof}

We only prove the lemma for $\kappa=2$.
One can prove the lemma for $\kappa=1$ similarly.
Let $s_1=(j_1, n_1), s_2=(j_2, n_2)$ in $\bS^{(2)}(\Omega)$. And
$s=(j,n)\in \mathbb Z(\ga)\times\mathbb Z$ such that
$ I_{s_2}\subseteq I_s\subseteq I_{s_1}$. By the convexity of $\bS(\Omega)$
we get $s\in\bS(\Omega)$. In order to get $s\in\bS^{(2)}(\Omega)$, we
need to show that $|\w_{2,j}| > |\w_{1,j}|/6$.
The simple case is the case $2^{k_j}=|\w_{1,j}|$. In this case, 
 $|\w_{1,j_2}|/10 \leq |\w_{1,j}|\leq  10|\w_{1, j_1}|$, which implies $j_2\leq j\leq j_1$. Since $|\w_{2, j_1}|>|\w_{1, j_1}|/6$
and $|\w_{2,j_2}|>|\w_{1, j_2}|/6$, the linearity of
the function $f(j)=(L_1j+M_1)-(L_2j+M_2)$
yields that $|\w_{2,j}|>|\w_{1, j}|/6$.

We now turn to another case $2^{k_j}=|\w_{2,j}|$. Since $I_s$
is nested between $I_{s_1}$ and $I_{s_2}$, we get
$|\w_{1, j_2}|/10 \leq |\w_{2,j}|\leq 10|\w_{1, j_1}|$.
The first half part of this inequality and the definition of $k_{j}$ imply
$j_2\leq  j$. And the second half part of the inequality and the fact
$(j_1, n_1)\in\bS^{(2)}(\Omega)$ yield $j\leq j_1$. Thus we
get $|\w_{2,j}|>|\w_{1,j}|/6$ by the linearity of the function $f(j)$.
 Hence $s$ must be in $\bS^{(2)}(\Omega)$ in
either case.  This proves the lemma.
 \end{proof}

\begin{lemma}\label{shadowest}
Let $\kappa\in\{1,2\}$, $\bT$ be a convex tree in $\bS^{(\kappa)}(\Omega)$  with the top $t=(j_\bT, n_\bT)$ and
$\partial\Sh_j(\bT)$ be the boundary of the $j$-th shadow of $\bT$.
Let $\card (\partial\Sh_j(\bT))$ denote the cardinality of
the boundary of the $j$-th shadow. Then
\begin{equation}\label{shadowest1}
\sum_{j\geq j_\bT}2^{-k_j}\card(\partial\Sh_j(\bT)) \leq C|I_\bT|\,,
\end{equation}
where $C$ is a constant independent of $\bT$.
\end{lemma}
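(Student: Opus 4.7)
The plan is to use the convexity of $\bT$ to show that $\{\Sh_j(\bT)\}_{j\geq j_\bT}$ forms a nested decreasing family of finite unions of dyadic intervals, and then to bound the number of components at each scale by a gap-counting argument combined with a telescoping step that exploits $k_{j+1}-k_j\geq 1$. Since $\Sh_j(\bT)=\emptyset$ when $j\notin\scl(\bT)$, the sum in the statement is effectively over $j\in\scl(\bT)$.

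First I would establish nesting: for $j_\bT\leq j\leq j'$ in $\scl(\bT)$ and any $s=(j',n')\in\bT$, the unique dyadic interval $\tilde I$ of length $2^{-k_j}$ containing $I_s$ sits inside $I_\bT$ (dyadic nesting plus $I_s\subseteq I_\bT$); writing $\tilde I=I_{\tilde s}$ with $\tilde s=(j,\tilde n)$ and applying convexity to the chain $I_s\subseteq I_{\tilde s}\subseteq I_t$ forces $\tilde s\in\bT$, so $I_s\subseteq \Sh_j(\bT)$. Hence $\Sh_{j'}(\bT)\subseteq \Sh_j(\bT)$. Since two elements of $\bT$ at scale $j_\bT$ would have equal-length nested intervals and therefore coincide, $\Sh_{j_\bT}(\bT)=I_\bT$, so $N_{j_\bT}=1$, where $N_j$ denotes the number of connected components of $\Sh_j(\bT)$; note $\card(\partial \Sh_j(\bT))=2N_j$.

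Next, for two consecutive scales $j<j'$ in $\scl(\bT)$, nesting implies that every component of $\Sh_{j'}(\bT)$ lies inside a unique parent component of $\Sh_j(\bT)$. If such a parent has $\kappa\geq 1$ children, the children are separated inside the parent by $\kappa-1$ disjoint gaps, each a union of dead dyadic intervals at scale $k_{j'}$ and hence of length $\geq 2^{-k_{j'}}$; these gaps lie in $\Sh_j(\bT)\setminus \Sh_{j'}(\bT)$. Letting $M_{j'}$ be the number of parents with at least one child, summing across parents yields
$$
(N_{j'}-M_{j'})\cdot 2^{-k_{j'}}\;\leq\;\bigl|\Sh_j(\bT)\setminus \Sh_{j'}(\bT)\bigr|,
$$
and summing this over consecutive pairs in $\scl(\bT)$ telescopes the right-hand side to at most $|I_\bT|$. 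Since $M_{j'}\leq N_j$, iterating $N_{j'}\leq N_j+(N_{j'}-M_{j'})$ from $j_\bT$ gives $N_j\leq 1+\sum_{j_\bT<j''\leq j,\,j''\in\scl(\bT)}(N_{j''}-M_{j''})$. Substituting and exchanging summation order,
$$
\sum_{j\geq j_\bT}2^{-k_j}\, N_j\;\leq\;\sum_{j\in\scl(\bT)}2^{-k_j}\;+\;\sum_{j''}(N_{j''}-M_{j''})\sum_{j\in\scl(\bT),\,j\geq j''}2^{-k_j}.
$$
Because each $L_\ell$ is a positive integer, $k_{j+1}-k_j\geq 1$, so each geometric tail satisfies $\sum_{j\geq j_0}2^{-k_j}\leq 2\cdot 2^{-k_{j_0}}$. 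Combined with the telescoped bound, both terms on the right are $O(|I_\bT|)$, producing $\sum_j 2^{-k_j}\card(\partial \Sh_j(\bT))\leq C|I_\bT|$.

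The main obstacle is the geometric lower bound in the gap step: one must verify that each "extra" component at scale $j'$ beyond the parent count $M_{j'}$ actually corresponds to a disjoint dead dyadic interval of length $\geq 2^{-k_{j'}}$ inside $\Sh_j(\bT)\setminus \Sh_{j'}(\bT)$. This hinges on the nested dyadic structure -- siblings share a single parent component and are spatially separated within it -- and convexity of $\bT$ is indispensable, since without nested shadows the notion of "parent component" breaks down and the gap-area estimate can fail.
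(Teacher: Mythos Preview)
Your proof is correct but takes a genuinely different route from the paper's. The paper argues directly: for each left boundary point $z\in\partial_{\rm left}\Sh_j(\bT)$ it associates the interval $I_j(z)=(z-2^{-k_j},\,z-2^{-k_j}/2)$ and shows, using the nesting $\Sh_{j'}(\bT)\subseteq\Sh_j(\bT)$ from convexity, that the collection $\{I_j(z)\}_{j,z}$ is pairwise disjoint. Since every $I_j(z)$ has length $2^{-k_j-1}$ and lies within a fixed dilate of $I_\bT$, summing lengths immediately gives the bound. Right endpoints are handled symmetrically.

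Your approach instead tracks the component count $N_j$: you bound the growth $N_{j'}-M_{j'}$ by a gap-area estimate, telescope the areas $|\Sh_j\setminus\Sh_{j'}|$, and then feed the recursion $N_j\leq 1+\sum_{j''\leq j}(N_{j''}-M_{j''})$ through the geometric tail $\sum_{j\geq j_0}2^{-k_j}\leq 2\cdot 2^{-k_{j_0}}$. This works, and every step is justified, but it is a two-stage argument (gap $\to$ recursion $\to$ geometric sum) where the paper gets away with a single disjointness observation. The paper's proof is shorter and does not need the tail-sum estimate or the monotonicity $k_{j+1}-k_j\geq 1$ at all; your proof, on the other hand, makes the combinatorics of how components split across scales more transparent, and the gap-counting idea would adapt more readily to settings where one wants finer control on $N_j$ itself rather than just the weighted sum.
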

\begin{proof}
This lemma is similar to one technical lemma (Lemma 4.8) in \cite{MTT3}. We give a
similar proof. 
Note that the $j$-th shadow consists of finite disjoint intervals and
its boundary thus contains all endpoints of the intervals. It is
sufficient to consider only all left endpoints since the right endpoints
can be handled in the same way. Let $\partial_{\rm left}(\Sh_j(\bT))$ denote
the collection of all left endpoints of the intervals in the $j$-th shadow.
Let $z\in \partial_{\rm left}(\Sh_j(\bT))$ and
$I_j(z)=(z-2^{-k_j}, z-2^{-k_j}/2)$.
To prove (\ref{shadowest1}), it suffices to show that the
intervals $I_j(z)$'s are disjoint for all possible $j, z$.
Assume that there are $j, j'\in\scl(\bT)$, $z\in
\partial_{\rm left}(\Sh_j(\bT))$ and $z'\in \partial_{\rm left}(\Sh_{j'}(\bT))$   such that $(j, z)\neq (j', z')$ and $I_j(z)\cap I_{j'}(z')\neq \emptyset$.
By the nesting property of dyadic intervals and the fact that
$z-2^{-k_j}$ is an endpoint of some dyadic intervals, we see that
$j\neq j'$. Without loss of generality, suppose that $j<j'$. The
fact that $I_j(z)$ and $I_{j'}(z')$ have nonempty intersection then
implies $z'\in (z-2^{-k_j}, z)$. Since $z$ is a left endpoint of
some intervals in the $j$-th shadow, $z'$ can not be in
$\Sh_j(\bT)$. However, the convexity of $\bT$ yields that
$\Sh_{j'}(\bT)\subseteq \Sh_j(\bT)$. This is a contradiction.
Therefore
 we obtain the lemma.
\end{proof}

\begin{lemma}\label{difftree}
Let $\kappa\in\{1,2\}$,
$\bT$ be a convex tree in $\bS^{(\kappa)}(\Omega)$ and
$\ti\La_\bT(f_1, f_2,  f_3)$ be defined by
\begin{equation}\label{deftila}
\ti\La_\bT(f_1, f_2, f_3)=\sum_j\int\prod_{\ell=1}^3
\sum_{n\in\bT_j} F_{\ell, j, n}(x)dx \,,
\end{equation}
where $\bT_j=\{n\in \mathbb Z: (j,n)\in \bT\}$ and
$F_{\ell, j, n}$ is defined by
\begin{equation}
 F_{\ell, j, n}(x)=\Id^*_{j,n}(x)f_{\ell, j, n_\ell}(x)\,.
\end{equation}
Then we have
\begin{equation}\label{tri1diffest}
 \big|\La_\bT(f_1, f_2, f_3)-\ti\La_\bT(f_1, f_2, f_3)\big|
\leq C\size^*_1({\bT})\size^*_2({\bT})|I_{\bT}|\,,
\end{equation}
where $C$ is a constant independent of $\bT, {\bS}, f_1, f_2, f_3$.
\end{lemma}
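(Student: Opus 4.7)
The plan is to express $\La_\bT-\ti\La_\bT$ as an integral whose integrand concentrates on the boundaries of the shadows $\Sh_j(\bT)$, then exploit pointwise size bounds on $f_1,f_2$ together with the shadow estimate (Lemma \ref{shadowest}). Since $\sum_{n\in\bT_j}\Id^*_{j,n}(x)=\Id^*_{\Sh_j(\bT)}(x)=:\varphi_j(x)$ and $f_{\ell,j,n_\ell}$ does not depend on $n$, one has
$$\La_\bT-\ti\La_\bT=\sum_{j\in\scl(\bT)}\int\bigl(\varphi_j-\varphi_j^3\bigr)\prod_{\ell=1}^3 f_{\ell,j,n_\ell}(x)\,dx.$$
Because $\varphi_j=\Id_{\Sh_j(\bT)}*\psi_{k_j}$ is a smoothed indicator (close to $1$ well inside $\Sh_j(\bT)$ and to $0$ well outside), the factor $\eta_j:=\varphi_j(1-\varphi_j)(1+\varphi_j)$ is concentrated in a $2^{-k_j}$-neighborhood of $\partial\Sh_j(\bT)$ and obeys
$$|\eta_j(x)|\leq C_N\sum_{z\in\partial\Sh_j(\bT)}\bigl(1+2^{k_j}|x-z|\bigr)^{-N}\quad\text{for every }N.$$

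Next I bound $\prod_\ell|f_{\ell,j,n_\ell}|$ on $2^{-k_j}$-neighborhoods of the boundary points. Since $\|f_3\|_\infty\leq 1$ and $\|\Phi_{3,j}\|_1=O(1)$ uniformly in $j$, $|f_{3,j,0}|\leq C$ everywhere. For any index $\ell$ with $2^{k_{j\ell}}\sim 2^{k_j}$, Lemma \ref{goodbmo1} yields $\|\Id^{**}_{j,n}f_{\ell,j,n_\ell}\|_\infty\leq C\|f_\ell\|_{j,n}\leq C\size^*_\ell(\bT)$; since $\Id^{**}_{j,n}\sim 1$ on the $2^{-k_j}$-neighborhood of any endpoint $z$ of $I_{k_j,n}$, we get $|f_{\ell,j,n_\ell}(x)|\leq C\size^*_\ell(\bT)$ there. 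This covers $\ell=2$ when $\kappa=1$ and both $\ell=1,2$ when $\kappa=2$ (since then $|\w_{1,j}|\sim|\w_{2,j}|$). In the remaining subcase $\kappa=1,\ell=1$ the function $f_{1,j,n_1}$ oscillates at the finer scale $2^{-k_{j1}}<2^{-k_j}$, so I instead apply H\"older's inequality with the $L^p$ component of $\|f_1\|_{j,n}$: for $z$ an endpoint of $I_s=I_{k_j,n}$ with $(j,n)\in\bT$,
$$\int_{|x-z|\leq C2^{-k_j}}|f_{1,j,n_1}(x)|\,dx\leq |I_s|^{1/p'}\|\Id^{**}_{j,n}f_{1,j,n_1}\|_p\leq |I_s|\,\|f_1\|_{j,n}\leq 2^{-k_j}\,\size^*_1(\bT).$$

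Combining the pieces, each boundary point $z\in\partial\Sh_j(\bT)$ contributes at most $C\,2^{-k_j}\size^*_1(\bT)\size^*_2(\bT)$ to $\int|\eta_j f_1 f_2 f_3|\,dx$; the Schwartz tails of $\eta_j$ produce only a summable geometric series across dyadic annuli which is absorbed into the constant. Summing over $z$ and $j\in\scl(\bT)$ and invoking Lemma \ref{shadowest} (which uses the convexity of $\bT$) yields
$$|\La_\bT-\ti\La_\bT|\leq C\,\size^*_1(\bT)\size^*_2(\bT)\sum_{j\in\scl(\bT)}2^{-k_j}\card(\partial\Sh_j(\bT))\leq C\,\size^*_1(\bT)\size^*_2(\bT)|I_\bT|,$$
which is the desired bound. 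The principal obstacle I foresee is the subcase $\kappa=1,\ell=1$: Lemma \ref{goodbmo1} is unavailable, so the $L^p$-based estimate must be carefully matched against the Schwartz decay of $\eta_j$ across annular layers around each $z$, and one must verify that summing over all boundary points and scales does not produce any entropic loss --- which is precisely the geometric content of the shadow estimate in Lemma \ref{shadowest}.
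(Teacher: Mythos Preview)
Your approach is essentially the paper's: write the difference as $\sum_j\int(\varphi_j-\varphi_j^3)\prod_\ell f_{\ell,j,n_\ell}$, localize near $\partial\Sh_j(\bT)$, control the product of the $f_\ell$'s by $\size^*_1\size^*_2$ via H\"older, Lemma~\ref{goodbmo1}, and the seminorm $L^p$ bound, and conclude with Lemma~\ref{shadowest}. The paper organizes the localization by inserting the weight $(\ti\Id^*_{\Sh_j(\bT)})^{-1/10}$ and summing over unit-scale intervals $I$, but this is equivalent to your boundary-point decomposition.

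There is, however, a genuine slip in your case analysis. In $\bS^{(2)}(\Omega)$ one only has $|\w_{2,j}|>|\w_{1,j}|/6$; nothing prevents $|\w_{2,j}|\gg|\w_{1,j}|$, so your parenthetical ``since then $|\w_{1,j}|\sim|\w_{2,j}|$'' is false. What \emph{is} guaranteed in $\kappa=2$ is $2^{k_{j1}}\sim 2^{k_j}$, so Lemma~\ref{goodbmo1} applies to $\ell=1$ (not necessarily to $\ell=2$). Thus there is an additional ``remaining subcase'' you omitted: $\kappa=2$, $\ell=2$ with $2^{k_{j2}}\gg 2^{k_j}$. The fix is immediate and is exactly your own $L^p$ argument: near a boundary point $z\in\partial I_{k_j,n}$ with $(j,n)\in\bT$,
\[
\int_{|x-z|\le C2^{-k_j}}|f_{2,j,n_2}(x)|\,dx\le |I_s|^{1/p'}\|\Id^{**}_{j,n}f_{2,j,n_2}\|_p\le 2^{-k_j}\,\size^*_2(\bT),
\]
while $\ell=1$ now plays the role of the $L^\infty$ factor via Lemma~\ref{goodbmo1}. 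With this correction (and taking $N$ in the $\eta_j$ decay large enough to absorb the polynomial growth of $(\Id^{**}_{j,n})^{-1}$ on the tail annuli), your argument goes through and coincides with the paper's.
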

\begin{proof}
Observe that the difference  $|\La_\bT-\ti\La_\bT|$ by
$$
\sum_{j\in\scl(\bT)}\int \bigg|\Id^*_{\Sh_j(\bT)}(x) -
 \big(\Id^*_{\Sh_j(\bT)}\big)^3 (x) \bigg|\prod_{\ell=1}^3 \big|f_{\ell,j, n_\ell}(x)\big| dx\,,
$$
which is dominated by
$$
\sum_{j\in\scl(\bT)}\sum_{I: |I|=2^{-k_j}}
\int_I \bigg|\bigg(\Id^*_{\Sh_j(\bT)}(x) -
 \big(\Id^*_{\Sh_j(\bT)}\big)^3 (x)\bigg)
 \big(\ti\Id^{*}_{\Sh_j(\bT)}(x)\big)^{-\frac{1}{10}} \bigg|
  \Pi_{j,\bT}(f_1, f_2, f_3)(x)dx\,,
$$
where
\begin{equation}\label{defoftish}
\ti\Id^{*}_{\Sh_j(\bT)}(x) = \int_{\Sh_j(\bT)} \frac{2^{k_j}}{\big(1+2^{2k_j}|x-y|^2\big)^{2^{1000}}} dx \,
\end{equation}
and
$$
\Pi_{j,\bT}(f_1, f_2, f_3)(x)=
\prod_{\ell=1}^3
  \big|\big(\ti\Id^{*}_{\Sh_j(\bT)}\big)^{1/30} f_{\ell,j, n_\ell}(x)\big| \,.
$$
H\"older inequality, Lemma \ref{goodbmo1} and (\ref{Lpest1}) then yield
that
\begin{equation}\label{sigle}
\big\|\Pi_{j, \bT}(f_1, f_2, f_3)\big\|_{L^1(I)}
\leq C\size^*_1({\bT})\size^*_2({\bT})
 2^{-k_j} \,.
\end{equation}
Thus we estimate the difference $|\La_\bT-\ti\La_\bT|$ by
$$
C\size^*_1({\bT})\size^*_2({\bT})\sum_{j\in\scl(\bT)}\sum_{I: |I|=2^{-k_j}}
|I| \bigg\|\bigg(\Id^*_{\Sh_j(\bT)} -
 \big(\Id^*_{\Sh_j(\bT)}\big)^3\bigg)
 \big(\ti\Id^{*}_{\Sh_j(\bT)}\big)^{-1/10} \bigg\|_{L^\infty(I)}\,,
$$
By the definition of $\Id^{*}_{\Sh_j(\bT)}$, it is easy to see that
it is a smooth approximation of $\Id_{\Sh_j(\bT)}$ and for any
positive interger $N$ the following inequality holds.
$$
|I|\bigg\|\bigg(\Id^*_{\Sh_j(\bT)} -
 \big(\Id^*_{\Sh_j(\bT)}\big)^3\bigg)
 \big(\ti\Id^{*}_{\Sh_j(\bT)}\big)^{-1/10} \bigg\|_{L^\infty(I)}
\leq \frac{C_N|I|}{\big( 1+|I|^{-1}{\rm dist}(I, \partial\Sh_j(\bT))\big)^N}\,.
$$
Summing up all $I$'s with $|I|=2^{-k_j}$, we estimate the difference by
$$
C\size^*_1({\bT})\size^*_2({\bT})\sum_{j\in \scl(\bT)} 2^{-k_j}\card(\partial\Sh_j(\bT)) \,.
$$
Hence the lemma follows by Lemma \ref{shadowest}.
\end{proof}

\begin{lemma}\label{diffest23}
Let $\bT$ be a convex tree in $\bS^{(2)}(\Omega)$.
For $\ell\in\{2,3\}$,
let $F_{\ell, j}$ be defined by
\begin{equation}\label{defofFlj}
 F_{\ell,j}(x)=\Id^*_{\Sh_j(\bT)}(x)f_{\ell,j, 0}(x)\,,
\end{equation}
if $\bT_j\neq \emptyset$, and $F_{\ell, j}\equiv 0$ if
$\bT_j=\emptyset$.
Then we have
\begin{equation}\label{diffestfinal}
\sup_M\bigg\|\bigg(\sum_j\big|
F_{\ell,j-M}-F_{\ell,j-M-L}\big|^2\bigg)^{1/2}\bigg\|_p\leq
C\size^*_\ell(\bT)|I_\bT|^{1/p}\,,
\end{equation}
where $L=2^{100}$, $M$ ranges over all integers between $0$ and $6L$
and $C$ is a constant independent of $f_\ell, \bT$.
\end{lemma}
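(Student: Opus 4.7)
The plan is as follows. Since the map $j\mapsto j-M$ is a bijection of $\mathbb Z$, $\sum_j|F_{\ell,j-M}-F_{\ell,j-M-L}|^2=\sum_j|F_{\ell,j}-F_{\ell,j-L}|^2$, so the $\sup_M$ is not genuine and it suffices to treat the case $M=0$. Extending the convention $\Id^*_{\Sh_j(\bT)}\equiv 0$ and $f_{\ell,j,\bT}\equiv 0$ when $\bT_j=\emptyset$, I decompose
$$F_{\ell,j}-F_{\ell,j-L} = \Id^*_{\Sh_j(\bT)}(f_{\ell,j,\bT}-f_{\ell,j-L,\bT}) + (\Id^*_{\Sh_j(\bT)}-\Id^*_{\Sh_{j-L}(\bT)})f_{\ell,j-L,\bT} =: A_{\ell,j}+B_{\ell,j},$$
into a ``Littlewood--Paley'' piece and a ``shadow-boundary'' piece.

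For the main piece $A_{\ell,j}$, at each scale $j$ the time intervals $I_s$, $s=(j,n)\in\bT$, are disjoint dyadic intervals of length $2^{-k_j}$, so that $\Id^*_{\Sh_j(\bT)}=\sum_{n:(j,n)\in\bT}\Id^*_{j,n}$ exactly; combined with the Schwartz decay of $\psi$, this gives the pointwise domination $|\Id^*_{\Sh_j(\bT)}(x)|^2\leq C\sum_n|\Id^{**}_{j,n}(x)|^2$. Summing over $j$,
$$\sum_j|A_{\ell,j}(x)|^2\leq C\sum_{(j,n)\in\bT}|\Id^{**}_{j,n}(x)(f_{\ell,j,\bT}-f_{\ell,j-L,\bT})(x)|^2,$$
which is exactly the square of the first summand of $\Delta^*_\ell(\bT)$ in (\ref{defdelta2}). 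Taking $L^p$ norms and using the built-in bound $\|\Delta^*_\ell(\bT)\|_p\leq\size^*_\ell(\bT)|I_\bT|^{1/p}$ closes this half.

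For the boundary piece $B_{\ell,j}$, the inclusion $\bT\subset\bS^{(2)}(\Omega)$ forces $2^{k_{j1}}\sim 2^{k_j}$ on all of $\bT$, so Lemma \ref{goodbmo1} applied at the top gives $\|\Id^{**}_{j_\bT,n_\bT}f_{\ell,\zeta(j_\bT,M',K),0}\|_\infty\leq C\,\size^*_\ell(\bT)$; by choosing $M'\in[0,6L]$ and $|K|\leq 10L$ so that $\zeta(j_\bT,M',K)$ sweeps the values $j-L$ for every relevant $j\in\scl(\bT)$, this yields $|f_{\ell,j-L,0}|\leq C\,\size^*_\ell(\bT)$ on a neighborhood of $I_\bT$. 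Hence $|B_{\ell,j}|\leq C\,\size^*_\ell(\bT)\Id_{E_j}$, where $E_j$ is an $O(2^{-k_j})$-thickening of $\partial\Sh_j(\bT)\cup\partial\Sh_{j-L}(\bT)$, giving $\|B_{\ell,j}\|_p\leq C\,\size^*_\ell(\bT)|E_j|^{1/p}$. Since $1<p<2$, the $\ell^{p/2}$-subadditivity $\bigl(\sum_j a_j\bigr)^{p/2}\leq\sum_j a_j^{p/2}$ yields
$$\Bigl\|\bigl(\sum_j|B_{\ell,j}|^2\bigr)^{1/2}\Bigr\|_p\leq\Bigl(\sum_j\|B_{\ell,j}\|_p^p\Bigr)^{1/p}\leq C\,\size^*_\ell(\bT)\Bigl(\sum_j|E_j|\Bigr)^{1/p},$$
and Lemma \ref{shadowest} supplies $\sum_j|E_j|\leq C|I_\bT|$.

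The hard part will be the boundary piece: the $\zeta$ semi-norm, tailored in (\ref{defseminormzeta}) precisely for this kind of scale shift, must be used to convert $\size^*_\ell(\bT)$-control at the top of $\bT$ into uniform $L^\infty$ control of $f_{\ell,j-L,0}$ for every $j\in\scl(\bT)$, and this has to be combined with Lemma \ref{shadowest} at the right $L^p$ exponent, where the restriction $p<2$ (through the $\ell^{p/2}$-subadditivity) is used crucially.
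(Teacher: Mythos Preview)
Your treatment of the Littlewood--Paley piece $A_{\ell,j}$ is correct and matches the paper's argument. The boundary piece $B_{\ell,j}$, however, contains two genuine gaps.

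\medskip
\textbf{The $L^\infty$ bound on $f_{\ell,j-L,0}$ is not available as you claim.} The assertion that $\zeta(j_\bT,M',K)$ ``sweeps the values $j-L$ for every relevant $j\in\scl(\bT)$'' is false: with $M'\in[0,6L]$ and $|K|\leq 10L$, the integer $\zeta(j_\bT,M',K)$ ranges over a bounded set (of cardinality $O(L)$ or $O(L\cdot L_1/L_2)$), whereas $\scl(\bT)$ can contain arbitrarily many scales. Even if you apply the $\zeta$ semi-norm at a general $(j',n')\in\bT$ rather than at the top, you still need $\zeta(j',M',K)=j-L$; but $\zeta(j',0,0)=[\frac{L_1 j'+M_1-M_2-6}{L_2}]$ is designed so that $L_2\zeta(j',0,0)+M_2\approx L_1 j'+M_1\approx k_{j'}$, i.e.\ so that $f_{\ell,\zeta,0}$ has frequency scale matching the \emph{spatial} scale $2^{-k_{j'}}$. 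In $\bS^{(2)}(\Omega)$ the frequency scale $2^{L_2(j-L)+M_2}$ of $f_{\ell,j-L,0}$ can be arbitrarily larger than $2^{k_{j-L}}$ (there is no upper bound on $|\omega_{2,j}|/|\omega_{1,j}|$), so $j-L$ need not lie in the range of $\zeta(j',\cdot,\cdot)$ for any $j'$ near $j$. The $\zeta$ machinery is tailored for the re-indexing in Lemma~\ref{sizelem}, not for the shift $j\mapsto j-L$ here.

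\medskip
\textbf{Your set $E_j$ misses the bulk.} The difference $\Id^*_{\Sh_j(\bT)}-\Id^*_{\Sh_{j-L}(\bT)}$ is not concentrated on a thickening of the boundaries; since $\Sh_j(\bT)\subset\Sh_{j-L}(\bT)$ by convexity, it is essentially $-\Id_{\Sh_{j-L}(\bT)\setminus\Sh_j(\bT)}$ plus transition layers near $\partial\Sh_j(\bT)\cup\partial\Sh_{j-L}(\bT)$. The annuli $\Sh_{j-L}(\bT)\setminus\Sh_j(\bT)$ are pairwise disjoint and satisfy $\sum_j|\Sh_{j-L}(\bT)\setminus\Sh_j(\bT)|\leq |I_\bT|$, and this --- not Lemma~\ref{shadowest} --- is the main contribution.

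\medskip
\textbf{What the paper does instead.} The paper does not seek an $L^\infty$ bound on $f_{\ell,j-L,0}$. It writes $(\Id^*_{\Sh_j}-\Id^*_{\Sh_{j-L}})f_{\ell,j-L,0}$ as $\big[(\Id^*_{\Sh_j}-\Id^*_{\Sh_{j-L}})(\ti\Id^*_{\Sh_{j-L}})^{-1/10}\big]\cdot\big[(\ti\Id^*_{\Sh_{j-L}})^{1/10}f_{\ell,j-L,0}\big]$, puts the bracketed cutoff factor in $L^\infty$ on each dyadic $I$ with $|I|=2^{-k_j}$, and controls the second factor $\Pi_j^*(f_\ell)$ in $L^p(I)$ via the semi-norm $\|f_\ell\|_{j-L,n'}$ at the nodes $(j-L,n')\in\bT$ (this is part of $\size^*_\ell$). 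The $L^\infty$ localization of the cutoff factor then confines the contribution to the annulus $\Sh_{j-L}\setminus\Sh_j$ and the boundary layers, whose total measure is $\leq C|I_\bT|$ by the disjointness of the annuli and Lemma~\ref{shadowest}. This is the step you should rework.
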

\begin{proof}
For simplicity, we only prove the lemma for $M=0$.
 It is easy to see that
$|F_{\ell, j}-F_{\ell,j-L}(x)|$ is dominated by
$$
\big|\Id^*_{\Sh_j(\bT)}(x)\big(f_{\ell, j, 0}(x)- f_{\ell,j-L,0}(x)\big)  \big| +
\big|\big(\Id^*_{\Sh_j(\bT)}(x) - \Id^*_{\Sh_{j-L}(\bT)}(x)\big)f_{\ell,j-L}(x)\big|\,.
$$
Clearly,  by the definition of $\Delta^*_\ell(\bT)$ and $\size_\ell^*(\bT)$,
we get
$$
\bigg\|\bigg(\sum_j\big|\Id^*_{\Sh_j(\bT)}\big(f_{\ell, j, 0}- f_{\ell,j-L,0}\big) \big|^2\bigg)^{1/2}\bigg\|_p \leq
 C\big\|\Delta^*_\ell(\bT)\big\|_p\leq C\size^*(\bT)|I_\bT|^{1/p}\,.
$$
Thus to obtain (\ref{diffestfinal}), it suffices to show that
\begin{equation}\label{diffestfinal1}
\bigg\|\bigg(\sum_j\big|\big(\Id^*_{\Sh_j(\bT)} - \Id^*_{\Sh_{j-L}(\bT)}\big)f_{\ell,j-L,0}   \big|^2\bigg)^{1/2}\bigg\|_p\leq C\size^*(\bT)|I_\bT|^{1/p} \,.
\end{equation}
Heuristically one can consider $\Id^*_{\Sh_j(\bT)}$ as $ \Id_{\Sh_j(\bT)}$.
Then by the nesting property of the $j$-th shadows due to the convexity of
the tree, we see that $\Sh_{j-L}(\bT)\backslash\Sh_j(\bT)$'s are disjoint
and this is the reason why we have such an estimate.

Now we go to the technical details.
Since $p\leq 2$, we  estimate
the left hand side of (\ref{diffestfinal1}) by
$$
\bigg(\sum_{j\in\scl(\bT)}\big\|\big(\Id^*_{\Sh_j(\bT)} - \Id^*_{\Sh_{j-L}(\bT)}\big) f_{\ell, j-L,0}\big\|_p^p \bigg)^{1/p}\,.
$$
This is dominated by
$$
\bigg(\sum_{j\in\scl(\bT)}\sum_{I:|I|=2^{-k_j}}
\int_I \bigg| \big(\Id^*_{\Sh_j(\bT)} - \Id^*_{\Sh_{j-L}(\bT)}\big)(x)
 (\ti\Id^*_{\Sh_{j-L}(\bT)}(x))^{-\frac{1}{10}} \Pi_j^*(f_\ell)(x) \bigg|^p dx \bigg)^{1/p}\,,
$$
where $\ti\Id^*_{\Sh_j(\bT)}$ is the function defined in (\ref{defoftish})
and $\Pi^*_j(f_\ell)=(\ti\Id^*_{\Sh_{j-L}(\bT)})^{1/10}f_{\ell,j-L,0}$.
H\"older inequality, Lemma \ref{goodbmo1} and (\ref{Lpest1}) then yield
that
\begin{equation}\label{sigle2}
\big\|\Pi^*_{j}(f_\ell)\big\|_{L^p(I)}
\leq C \size^*_1({\bT})\size^*_2({\bT})|I|^{1/p} \,.
\end{equation}
Thus we dominate the left hand side of (\ref{diffestfinal1}) by
$$
C\size^*_1({\bT})\size^*_2({\bT})
\bigg( \sum_{j\in\scl(\bT)}\sum_{I:|I|=2^{-k_j}}
 \bigg\| \big(\Id^*_{\Sh_j(\bT)} - \Id^*_{\Sh_{j-L}(\bT)}\big)
 (\ti\Id^*_{\Sh_{j-L}(\bT)})^{-\frac{1}{10}}     \bigg\|_{L^\infty(I)} |I|
\bigg)^{1/p}
$$
Since $\Sh_{j}(\bT)\subset\Sh_{j-L}(\bT)$, it is easy to see that
$$
 \big| \Id^*_{\Sh_j(\bT)}(x) - \Id^*_{\Sh_{j-L}(\bT)}(x) \big|
 \leq  C \ti\Id^*_{\Sh_{j-L}(\bT)}(x)\,.
$$
On the other hand,
observe that
$|\Id^*_{\Sh_j(\bT)}-\Id^*_{\Sh_{j-L}(\bT)}| $ is dominated by
$$
 \dSh^*_j(x) = \Id_{\Sh_{j-L}(\bT)\backslash \Sh_{j}(\bT)}*\psi_{k_{j-L}}(x)
+ \frac{C_N}{\big(1+ 2^{k_j}{\rm dist}(x, \partial(\Sh_j(\bT)))\big)^N}\,,
$$
for any positive integer $N$. Hence the $L^{\infty}(I)$ norm
of
$ \big(\Id^*_{\Sh_j(\bT)} - \Id^*_{\Sh_{j-L}(\bT)}\big)
 (\ti\Id^*_{\Sh_{j-L}(\bT)})^{-\frac{1}{10}} $ is estimated
by
$$
\frac{C_N}{\big(1+|I|^{-1}{\rm dist}(I, \Sh_{j-L}(\bT)\backslash\Sh_{j}(\bT))   \big)^N } + \frac{C_N}{\big(1+|I|^{-1}{\rm dist}(I, \partial(\Sh_j(\bT))  ) \big)^N }\,.
$$
For those $I$'s contained in $\Sh_j(\bT)$, we have
$$
\frac{1}{\big(1+|I|^{-1}{\rm dist}(I, \Sh_{j-L}(\bT)\backslash\Sh_{j}(\bT))   \big)^N }\leq \frac{1}{\big(1+|I|^{-1}{\rm dist}(I, \partial(\Sh_j(\bT))  ) \big)^N }\,.
$$
For those $I$'s contained in $(\Sh_{j-L}(\bT))^c$, we get
$$
\frac{1}{\big(1+|I|^{-1}{\rm dist}(I, \Sh_{j-L}(\bT)\backslash\Sh_{j}(\bT))   \big)^N }\leq \frac{1}{\big(1+|I|^{-1}{\rm dist}(I, \partial(\Sh_{j-L}(\bT))  ) \big)^N }\,.
$$
Thus we have
\begin{eqnarray*}
  &   & \sum_{I: |I|=2^{-k_j}} \frac{1}{\big(1+|I|^{-1}{\rm dist}(I, \Sh_{j-L}(\bT)\backslash\Sh_{j}(\bT))\big)^N  }\\
 & \leq &
 |I|^{-1} \big|\Sh_{j-L}(\bT)\backslash\Sh_j(\bT)\big|
+ \card\big(\partial\Sh_j(\bT)\big) + \card\big(\partial\Sh_{j-L}(\bT)\big)\,.
\end{eqnarray*}
By the nesting property of $j$-th shadows, the fact $2^{k_j}\sim 2^{k_{j-L}}$,
and Lemma \ref{shadowest}, we obtain that
$$
\sum_{j\in\scl(\bT)}\sum_{I:|I|=2^{-k_j}}
 \bigg\| \big(\Id^*_{\Sh_j(\bT)} - \Id^*_{\Sh_{j-L}(\bT)}\big)
 (\ti\Id^*_{\Sh_{j-L}(\bT)})^{-\frac{1}{10}}     \bigg\|_{L^\infty(I)} |I|
\leq C|I_\bT|\,,
$$
which yields the desired estimate (\ref{diffestfinal1}).
Therefore we finish the proof.
\end{proof}

\begin{lemma}\label{sizelem}
Let $\kappa\in\{1, 2\}$
and $\bT$ be a convex tree in $\bS^{(\kappa)}(\Omega)$.  Then we have
\begin{equation}\label{lemest1115}
\big|\La_{\bT}(f_1, f_2, f_3)\big|
\leq  C\size^*_1({\bT})\size^*_2({\bT})|I_{\bT}|\,,
\end{equation}
where $C$ is a constant independent of $\bT, {\bS}, f_1, f_2, f_3$.
\end{lemma}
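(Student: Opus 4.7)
The plan is to use Lemma \ref{difftree} to reduce $\La_\bT$ to $\ti\La_\bT$, and then treat the two structural cases $\kappa = 1, 2$ of $\bS^{(\kappa)}(\Omega)$ separately. By Lemma \ref{difftree},
$$
|\La_\bT(f_1, f_2, f_3)| \leq |\ti\La_\bT(f_1, f_2, f_3)| + C\,\size^*_1(\bT)\size^*_2(\bT)|I_\bT|,
$$
so it suffices to bound $|\ti\La_\bT|$ by the same right-hand side. Performing the inner sum over $n\in\bT_j$ identifies $\sum_{n\in\bT_j}\Id^*_{j,n}$ with $\Id^*_{\Sh_j(\bT)}$, hence
$$
\ti\La_\bT(f_1,f_2,f_3) = \int \sum_{j\in\scl(\bT)}\big(\Id^*_{\Sh_j(\bT)}(x)\big)^3 \prod_{\ell=1}^{3} f_{\ell, j, n_\ell}(x)\,dx.
$$

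In Case $\kappa=1$ one has $2^{k_{j2}}\sim 2^{k_j}$, so Lemma \ref{goodbmo1} applied to each $(j,n)\in\bT$ with $\ell=2$ gives $\|\Id^{**}_{j,n}f_{2,j,0}\|_\infty \leq C\|f_2\|_{j,n}\leq C\size^*_2(\bT)$, where the second inequality uses that $\{(j,n)\}$ is itself a single-point subtree of $\bT$. Modulo the polynomial off-support decay of $\Id^*_{\Sh_j(\bT)}$, this supplies a uniform-in-$j$ pointwise bound $|\Id^*_{\Sh_j(\bT)}(x) f_{2,j,n_2}(x)|\leq C\size^*_2(\bT)$. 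Pulling the $f_2$ factor out of the $j$-sum and applying Cauchy--Schwarz in $j$ on the remaining $\ell=1, 3$ factors then dominates $|\ti\La_\bT|$ by
$$
C\,\size^*_2(\bT)\int \Delta^*_1(\bT)(x)\,\Delta^*_3(\bT)(x)\,dx.
$$
A final H\"older in $x$ with exponents $p, p'$ satisfying $1/p+1/p'=1$, combined with $\|\Delta^*_1(\bT)\|_p\leq \size^*_1(\bT)|I_\bT|^{1/p}$ (from the definition of $\size^*_1$) and $\|\Delta^*_3(\bT)\|_{p'}\leq C|I_\bT|^{1/p'}$ (from Lemma \ref{Lpest} together with $|f_3|\leq 1$), finishes this case.

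Case $\kappa=2$ is more delicate since now $2^{k_{j1}}\sim 2^{k_j}$ and neither $f_2$ nor $f_3$ admits an immediate uniform $L^\infty$ bound. Here we split $f_{\ell,j,n_\ell} = f_{\ell,j,0} + (f_{\ell,j,n_\ell}-f_{\ell,j,0})$ for $\ell\in\{2,3\}$. The modulation-difference pieces pair with the second square-function term in (\ref{defdelta2}): Cauchy--Schwarz in $j$ places $f_1$ inside $\Delta^*_1(\bT)$ and the modulation difference of $f_\ell$ inside $\Delta^*_\ell(\bT)$, and H\"older in $x$ (using $|f_3|\leq 1$ for the spectator factor) bounds each such contribution by $C\,\size^*_1(\bT)\size^*_2(\bT)|I_\bT|$. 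For the remaining main piece
$$
\int \sum_{j\in\scl(\bT)}\big(\Id^*_{\Sh_j(\bT)}(x)\big)^3 f_{1,j,n_1}(x)\,f_{2,j,0}(x)\,f_{3,j,0}(x)\,dx,
$$
telescope in lacunary blocks of step $L=2^{100}$: since $\scl(\bT)\subset\mathbb Z(\ga)$, each $f_{\ell,j,0}$ expands as a telescoping sum of differences $F_{\ell,j-M}-F_{\ell,j-M-L}$ with $M\in[0,6L]$, and Lemma \ref{diffest23} supplies the uniform-in-$M$ square-function bound $\|(\sum_j|F_{\ell,j-M}-F_{\ell,j-M-L}|^2)^{1/2}\|_p\leq C\,\size^*_\ell(\bT)|I_\bT|^{1/p}$. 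Pair $\Delta^*_1(\bT)$ with the lacunary $\ell=2$ square function via Cauchy--Schwarz in $j$, and bound the $\ell=3$ factor by Lemma \ref{Lpest} combined with $|f_3|\leq 1$, to arrive at the desired $C\,\size^*_1(\bT)\size^*_2(\bT)|I_\bT|$.

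The main obstacle is Case $\kappa=2$: the lacunary telescoping must be organized so that every surviving contribution is an $\ell^2$ square function pairable with $\Delta^*_1(\bT)$ without accumulating $M$-dependent losses. This is precisely what the sup-in-$M$ estimate in Lemma \ref{diffest23} supplies, and that estimate in turn feeds on the convexity of $\bT$ (Lemma \ref{convexitylem}) together with the shadow boundary estimate (Lemma \ref{shadowest}).
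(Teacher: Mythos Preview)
Your Case $\kappa=1$ is correct and matches the paper's argument. The gap is in Case $\kappa=2$.

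Your description ``each $f_{\ell,j,0}$ expands as a telescoping sum of differences $F_{\ell,j-M}-F_{\ell,j-M-L}$'' is not the telescoping that is actually needed, and as written it does not produce a usable structure. The index $j$ is shared among all three factors, so one cannot telescope $f_{2,j,0}$ and $f_{3,j,0}$ independently in $j$. What the paper does (mirroring Lemma~\ref{telelem1}) is telescope the \emph{product} $F_{2,j}F_{3,j}$ down to a scale matching $\omega_{1,j}$, then change variables so that the resulting inner sum lands on the $f_1$ factor. This generates two genuinely different types of terms: (i) terms $\La_{\bT,1}$ where $f_1$ sits at a single shifted scale $j+m'(j)-M$ and one of $f_2,f_3$ is at a fixed shifted scale such as $j-L$ or $j-8L$; and (ii) terms $\La_{\bT,2}$ where $f_1$ appears as the block sum $\sum_{k=0}^{m'(j)}F_{1,j+k}$.

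Neither type is handled by what you wrote. For type (i), the factor $f_{\ell,\zeta(j,M,K),0}$ at a shifted scale is not controlled by the basic seminorm $\|f_\ell\|_{j,n}$; this is exactly why the $\zeta$-seminorm (\ref{defseminormzeta}) and the bound (\ref{smallboundzeta}) were introduced, and you never invoke them. Moreover, after Cauchy--Schwarz the $f_1$ square function must be placed in $L^{p'}$, not $L^p$, so the size definition alone is insufficient; one needs the BMO estimate of Lemma~\ref{goodbmo2} and interpolation to obtain $\|\Delta_1(\bT)\|_{p'}\le C\,\size^*_1(\bT)|I_\bT|^{1/p'}$. For type (ii), the object $\sup_j\bigl|\sum_{k=0}^{m'(j)}F_{1,j+k}\bigr|$ does not embed into $\Delta^*_1(\bT)$ at all; controlling it in $L^{p_1}$ for some $p_1>p'$ requires a separate Calder\'on--Zygmund/BMO argument on $\sum_j F_{1,j}$ (see (\ref{bmoest111}) and (\ref{p1normest}) in the paper). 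Your single line ``Pair $\Delta^*_1(\bT)$ with the lacunary $\ell=2$ square function'' collapses all of this and does not go through.
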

\begin{proof}
By Lemma \ref{difftree}, it is sufficient to show that
\begin{equation}\label{lemest1116}
\big|\ti\La_{\bT}(f_1, f_2, f_3)\big|
\leq  C\size^*_1({\bT})\size^*_2({\bT})|I_{\bT}|\,,
\end{equation}
where $C$ is a constant independent of $\bT, {\bS}, f_1, f_2, f_3$.\\

We first prove the simple case $\kappa=1$. In this case,
$k_{j2}= k_j$ for all $(j, n)\in\bT$.
We thus dominate $|\ti\La_\bT|$ by
$$
\int_{\mathbb R} \sup_{j}\bigg|\sum_{n\in\bT_j}F_{2,j, n}(x)\bigg|
 \prod_{\ell\neq 2} \bigg( \sum_{(j,n)\in\bT}\big| F_{\ell, j, n}(x)\big|^2 \bigg)^{1/2} dx\,.
$$
By the definition of $\Delta_\ell$ and H\"older inequality, we estimate
$|\La_\bT|$ by
$$
\big\|\sup_{(j,n)\in\bT}\big|F^*_{2,j, n_2}\big| \big\|_\infty
 \big\|\Delta_1(\bT)\big\|_p \big\|\Delta_3(\bT)\big\|_{p'}\,,
$$
where $1/p+1/p'=1$ and $F^*_{\ell,j, n}=\Id^{**}_{j,n}f_{\ell, j, n_\ell}$.
 Lemma \ref{goodbmo1} yields that
$$
\big\| F^*_{2,j, n}\big\|_\infty\leq \size^*_2({\bT})\,.
$$
Clearly the definition of size yields
$$\|\Delta_1(\bT)\|_p\leq \size^*_1({\bT})|I_\bT|^{1/p} \,.$$
And (\ref{Lpest1}) yields
$$
\big\|\Delta_3(\bT)\big\|_{p'}\leq C|I_\bT|^{1/p'}\,.
$$
Putting all of them together, we obtain (\ref{lemest1115}) for the case $\kappa=1$.
\\

We now prove the case $\kappa=2$. In this case, $ 2^{k_j}\sim
2^{k_{j1}}$ for all $(j,n)\in\bT $.
For simplicity, we only consider the case $n_\ell=0$. The general case can be done in the same way
by paying a cost of $(1+|n_\ell|)^{10}$ in the constant.
Then we write the trilinear form $\ti\La_\bT$ as
$$
\ti\La_\bT(f_1, f_2, f_3)=\sum_{j\in\mathbb Z} \int\prod_{\ell=1}^{3}
   F_{\ell, j}(x) dx\,,
$$
where $F_{\ell,j}$ is defined in (\ref{defofFlj}).  Here we take a
convenient notation that $F_{\ell, j}$ is identically zero if $j\notin
\scl(\bT)$. Let $L=2^{100}$. By the telescoping argument used in
Lemma \ref{telelem1}, we can write $\ti\La_\bT$ as a finite sum of
two types of trilinear forms. One type of them is defined by
\begin{equation}\label{goodtype}
\La_{\bT, 1}(f_1, f_2, f_3)=\int\sum_{j\in\mathbb Z } F_{1,
j+m'(j)-M}(x)
 \Pi_{j,L}(F_{2,j}, F_{3,j})(x)dx\,,
\end{equation}
where $m'(j)=[(L_2j+M_2-L_1j-M_1+6)/L_1]$, $M$ is an integer between
$0$ and $6L$, and $\Pi_{j,L}(F_{2,j}, F_{3,j})$ equals to
$(F_{2,j}-F_{2, j-L})
 F_{3, j-8L}$ or $F_{2, j-L}(F_{3,j}-F_{3,j-L})$.
Another type of them is defined by
\begin{equation}\label{badtype}
\int\sum_{j\in\mathbb Z } \bigg(\sum_{k=0}^{m'(j)}F_{1, j+k}(x)
\bigg)
 \big(F_{2,j}(x)-F_{2,j-L}(x)\big)\big(F_{3,j-M}(x)-F_{3,j-M-L}(x)\big)dx\,,
\end{equation}
which is denoted by $\La_{\bT, 2}(f_1, f_2, f_3)  $. \\

We now prove the estimate for the first type trilinear form $\La_{\bT,1}$.
Let us first consider  the case 
$$
\La_{\bT,1}(f_1, f_2, f_3)=\int\sum_{j\in\mathbb Z } F_{1,
j+m'(j)-M}(x)(F_{2,j}-F_{2, j-L})(x)
 F_{3, j-8L}(x)dx\,.
$$
In this case,  by  Cauchy-Schwarz inequality,
$|\La_{\bT,1}|$ is estimated by
$$
 \int \bigg(\sum_j\big|F_{1,j+m'(j)-M}(x)F_{3,j-8L}(x)\big|^2\bigg)^{1/2}
   \bigg(\sum_j\big| F_{2,j}(x)-F_{2,j-L}(x)\big|^2\bigg)^{1/2}
   dx\,.
$$
Using H\"older inequality,  we dominate it by
$$
\bigg\|\bigg(\sum_j\big|F_{1,j+m'(j)-M}F_{3,j-8L}\big|^2\bigg)^{1/2}
\bigg\|_{p'}\bigg\|\bigg(\sum_j\big|
F_{2,j}-F_{2,j-L}\big|^2\bigg)^{1/2} \bigg\|_p\,.
$$
The first factor in this expression is no more than
$$
\bigg\|\bigg(\sum_j\bigg|
 \sum_{n\in\bT_{j+m'(j)-M}}\Id^*_{j+m'(j)-M,n}f_{1,j+m'(j)-M,n_1}
 f_{3,j-8L,0}\bigg|^2\bigg)^{1/2} \bigg\|_{p'}\,,
$$
which is dominated by
$$
\bigg\|\bigg(\sum_j
 \sum_{n\in\bT_{j+m'(j)-M}}\bigg|\big(\ti\Id^{*}_{j+m'(j)-M,n}\big)^2f_{1,j+m'(j)-M,n_1}
 f_{3,j-8L,0}\bigg|^2\bigg)^{1/2} \bigg\|_{p'}\,.
$$
We estimate it by
$$
\bigg\|\bigg(\sum_{(j,n)\in\bT}
 \big|\ti\Id^{*}_{j,n}f_{1,j,n_1}\big|^2\bigg)^{1/2} \bigg\|_{p'}
 \sup_{(j,n)\in\bT}\big\|\ti\Id^{*}_{j,n}f_{3,\zeta(j,M,K),0}\big\|_\infty\,,
$$
where $K$ is some integer between $-10L$ and $10L$ and
$\zeta(j,M,K)$is defined as in (\ref{defofzeta}). Clearly,
$\ti\Id^*_{j,n}f_{3,\zeta(j,M,K),0}$ is bounded. Also by Lemma
\ref{goodbmo2} and an interpolation, we have
\begin{equation}\label{est1term}
\bigg\|\bigg(\sum_j\big|\ti\Id^{*}_{j,n}f_{1,j,n_1}
\big|^2\bigg)^{1/2} \bigg\|_{p'}\leq
C\size^*_1(\bT)|I_\bT|^{1/p'}\,.
\end{equation}
And Lemma \ref{diffest23} yields that
\begin{equation}\label{est2term}
\bigg\|\bigg(\sum_j\big| F_{2,j}-F_{2,j-L}\big|^2\bigg)^{1/2}
\bigg\|_p\leq \size_2^*(\bT)|I_\bT|^{1/p}\,.
\end{equation}
(\ref{est1term}) and (\ref{est2term}) give us the desired estimate
for $\La_{\bT,1}$ in the first case.

We now consider the case 
$$
\La_{\bT,1}(f_1, f_2, f_3)=\int\sum_{j\in\mathbb Z } F_{1,
j+m'(j)-M}(x)F_{2,j-L}(x) \big( F_{3, j}-F_{3, j-L}\big)(x)dx\,.
$$
In this case, using Cauchy-Schwarz inequality, we have that
$|\La_{\bT,1}|$ is estimated by
$$
 \int \bigg(\sum_j\big|F_{1,j+m'(j)-M}(x)F_{2,j-L}(x)\big|^2\bigg)^{1/2}
   \bigg(\sum_j\big| F_{3,j}(x)-F_{3,j-L}(x)\big|^2\bigg)^{1/2}
   dx\,.
$$
By H\"older inequality,  we dominate it by
$$
\bigg\|\bigg(\sum_j\big|F_{1,j+m'(j)-M}F_{2,j-L}\big|^2\bigg)^{1/2}
\bigg\|_{p'}\bigg\|\bigg(\sum_j\big|
F_{3,j}-F_{3,j-L}\big|^2\bigg)^{1/2} \bigg\|_p\,.
$$
The first factor in this expression is no more than
$$
\bigg\|\bigg(\sum_j
 \sum_{n\in\bT_{j+m'(j)-M}}\bigg|\big(\ti\Id^{*}_{j+m'(j)-M,n}\big)^2f_{1,j+m'(j)-M,n_1}
 f_{2,j-L,0}\bigg|^2\bigg)^{1/2} \bigg\|_{p'}\,.
$$
We estimate it by
$$
\bigg\|\bigg(\sum_{(j,n)\in\bT}
 \big|\ti\Id^{*}_{j,n}f_{1,j,n_1}\big|^2\bigg)^{1/2} \bigg\|_{p'}
 \sup_{(j,n)\in\bT}\big\|\ti\Id^{*}_{j,n}f_{2,\zeta(j,M,K),0}\big\|_\infty\,,
$$
where $K$ is some integer between $-10L$ and $10L$ and
$\zeta(j,M,K)$is defined as in (\ref{defofzeta}). By
(\ref{smallboundzeta}) and the definition of size, we see that
\begin{equation}\label{2case2est}
\sup_{(j,n)\in\bT}\big\|\ti\Id^{*}_{j,n}f_{2,\zeta(j,M,K),0}\big\|_\infty
\leq C\size^*_2(\bT)\,.
\end{equation}
Lemma \ref{diffest23} and (\ref{Lpest2}) yield that
\begin{equation}\label{est3term}
\bigg\|\bigg(\sum_j\big| F_{3,j}-F_{3,j-L}\big|^2\bigg)^{1/2}
\bigg\|_p\leq |I_\bT|^{1/p}\,.
\end{equation}
Putting (\ref{est1term}), (\ref{2case2est}) and (\ref{est3term})
together, we thus get the desired estimate for $\La_{\bT,1}$ in the
second case.\\

Finally let us estimate $\La_{\bT,2}$. The integrand in
(\ref{badtype}) is dominated by
$$
\sup_j\bigg|\sum_{k=0}^{m'(j)}F_{1, j+k}(x) \bigg|
\bigg(\sum_{j\in\mathbb Z }
 \big|\big(F_{2,j}-F_{2,j-L}\big)(x)\big|^2\bigg)^{\frac{1}{2}}
\bigg(\sum_{j\in\mathbb Z }
 \big|\big(F_{3,j-M}-F_{3,j-M-L}\big)(x)\big|^2\bigg)^{\frac{1}{2}}.
$$
There exist $p_1, p_3\in\mathbb R$ such that $1/p_1+1/p+1/p_3=1$ and
$p_1>p', p_3>1$. By H\"older inequality we dominate $\La_{\bT,2}$ by
$$
\bigg\|\sup_j\bigg|\sum_{k=0}^{m'(j)}F_{1, j+k}(x)
\bigg|\bigg\|_{p_1}\bigg\| \bigg(\sum_{j\in\mathbb Z }
 \big|F_{2,j}-F_{2,j-L}\big|^2\bigg)^{\frac{1}{2}}\bigg\|_{p}
\bigg\|\bigg(\sum_{j\in\mathbb Z }
 \big|F_{3,j-M}-F_{3,j-M-L}\big|^2\bigg)^{\frac{1}{2}}\bigg\|_{p_3}.
$$
Just notice that one can simply define the size with respect to any
number $p_3$ by using $L^{p_3}$, then (\ref{Lpest2}) and Lemma
\ref{diffest23} still hold. Thus we have
\begin{equation}\label{anyqest}
\bigg\|\bigg(\sum_{j\in\mathbb Z }
\big|F_{3,j-M}-F_{3,j-M-L}\big|^2\bigg)^{\frac{1}{2}}\bigg\|_{p_3}
\leq C|I_\bT|^{1/p_3}
\end{equation}
Notice that the supports of Fourier transform of $F_{1,j+k}$'s are
essentially disjoint. We thus have
$$
\bigg\|\sup_j\bigg|\sum_{k=0}^{m'(j)}F_{1, j+k}(x)
\bigg|\bigg\|_{p_1}\leq C\bigg \|\sum_j F_{1,j}\bigg\|_{p_1}\,.
$$
Clearly,
$$
 \bigg\|\sum_j F_{1,j}\bigg\|_2 \leq
 \big\|\Delta_1(\bT)\big\|_2\,.
$$
By Lemma \ref{goodbmo2} and an interpolation, we have that
$$
\big\|\Delta_1(\bT)\big\|_2\leq C\size_1^*(\bT)|I_\bT|^{1/2}\,.
$$
Thus we get
$$
\bigg\|\sum_j F_{1,j}\bigg\|_2\leq C\size_1^*(\bT)|I_\bT|^{1/2}\,.
$$
A routine argument as we did in Lemma \ref{goodbmo2} yields
\begin{equation}\label{bmoest111}
\bigg\|\sum_j F_{1,j}\bigg\|_{BMO}\leq C\size_1^*(\bT)\,.
\end{equation}
Now by an interpolation, we obtain that
\begin{equation}\label{p1normest}
\bigg\|\sum_j F_{1,j}\bigg\|_{p_1} \leq
C\size_1^*(\bT)|I_\bT|^{1/p_1}\,.
\end{equation}
Hence the desired estimate for $\La_{\bT,2}$ now follows by
(\ref{p1normest}), (\ref{est2term}) and (\ref{anyqest}). Therefore
we obtain Lemma \ref{sizelem}.

\end{proof}

\subsection{Proof of Lemma \ref{ftriest}}
We now prove Lemma \ref{ftriest}.  Without loss of generality, we
can assume that $\bS$ is a convex set.  Lemma \ref{convexitylem}
then yields that $\bS^{(1)}(\Omega)$ and $\bS^{(2)}(\Omega)$ are
convex. By the definition of convexity, we see that the convexity is
preserved for a maximal tree in a convex set and the remaining set
obtained by removing a maximal tree from a convex set. Thus,
applying the organization lemma \ref{prilem} for
$\bS^{(\kappa)}(\Omega)$ inductively, we decompose
\begin{equation}\label{goodde}
\bS^{(\kappa)}(\Omega)=\bigcup_{\sigma} \bS^{(\kappa)}_\si\,,
\end{equation}
where $\kappa \in\{1, 2\}$,  $\sigma $ ranges over all possible dyadic numbers,
$\bS^{(\kappa)}_\si=\cup_{\bT\in\mathcal F_\si^{(\kappa)}}\bT$
such that $\mathcal F_\si^{\kappa} $ is a collection of convex trees
with
\begin{equation}\label{ctest1}
 \ct(\bS_\si^{(\kappa)})\leq C\si^{-p}\,,
\end{equation}
and for both $\ell=1$ and $\ell=2$,
\begin{equation}\label{sizekaest}
\size^*_\ell(\bS_\si^{(\kappa)})\leq \si|F_\ell|^{1/p}\,.
\end{equation}
By Lemma \ref{Lpest} and the definition of $\bS(\Omega)$,
we know that $\sigma \leq 1$ in order to make
$\bS_\si^{(\kappa)}$ nonempty and
 we can also sharpen the upper bound in
the size estimate for $\bS_\si^{(\kappa)}$ by 
\begin{equation}\label{sizekaest1}
\size^*_\ell(\bS_\si^{(\kappa)})\leq \min\{1, \si|F_\ell|^{1/p}\}\,.
\end{equation}
Hence we estimate $\La_{\bS(\Omega)}$ by
$$
\big|\La_{\bS(\Omega)}(f_1, f_2, f_3)\big|\leq \sum_{\kappa=1}^2
 \sum_{\si \leq 1} \sum_{\bT\in\mathcal F_\si^{(\kappa)}}
\big|\La_\bT(f_1, f_2, f_3) \big|\,.
$$
Lemma \ref{sizelem} yields that
$$
 \big|\La_{\bS(\Omega)}(f_1, f_2, f_3)\big|\leq \sum_{\kappa=1}^2
 \sum_{\si \leq 1} \sum_{\bT\in\mathcal F_\si^{(\kappa)}}
 \size^*_1(\bS_\si^{(\kappa)})  \size^*_2(\bS_\si^{(\kappa)}) |I_\bT|       \,.
$$
Applying (\ref{sizekaest1}) and (\ref{ctest1}), we thus obtain
\begin{equation}\label{finalest}
\big|\La_{\bS(\Omega)}(f_1, f_2, f_3)\big|\leq
 C\sum_{\sigma\leq 1}\min\{1, \si|F_1|^{1/p}\}
   \min\{1, \si|F_2|^{1/p}\}\si^{-p}\,,
\end{equation}
which clearly implies (\ref{triest1121}).
Therefore we complete the proof of Lemma {\ref{ftriest}}.\\

\section{Proof of Theorem \ref{para2est}}\label{proofpara2}
\setcounter{equation}0

We now prove Theorem \ref{para2est}.
The uniform estimate from $L^2\times L^2$ to $L^1$
follows immediately by a change of variables and Littlewood-Paley theory
and (\ref{2large1}) is superfluous. 
Take this simple idea and we can get the uniform estimate for $p_1, p_2>2$
and $1<r<2$ in Proposition \ref{uniestp2good} for 
the case $2^{L_2j+M_2}< 2^{L_1j+M_1}/8$ or $2^{L_1j+M_1}<2^{L_2j+M_2}/8$.
For the general case, we pay a cost of $m$ in the operator norm
in this range of $p_1, p_2, p$ to get Lemma \ref{uniestp2}.

For $r<1$ case, we use some idea from Section \ref{para2} and one can
see that technically it is much simpler than what we did in Section \ref{para2}.
We have to assume (\ref{2large1}) and pay a little more for the operator norm such as $2^{\e m}$ (see Lemma \ref{uniestp11}). 
The uniform estimate might be true but $2^{\e m}$ for a small $\e>0$ is good enough for our application.

As we did in Section \ref{para2}, we set up a trilinear form first.
Let us ignore the condition (\ref{2large1}) for a while. 
 If $2^{L_2j+M_2}< 2^{L_1j+M_1}/8$,  let $\w'_{3,j}=\{\xi:
2^{L_1j+M_1}/8\leq |\xi|\leq 19\cdot 2^{L_1j+M_1}/8\}$ and $\Phi_{3, j}$ be a Schwartz
function whose Fourier transform is a bump function adapted to
$\w'_{3,j}$ such that $\wh\Phi_{3,j}(\xi)=1$ for all $2^{L_1j+M_1}/4  \leq
|\xi|\leq 9\cdot 2^{L_1j+M_1}/4$.

If $2^{L_1j+M_1}< 2^{L_2j+M_2}/8$,  let $\w'_{3,j}=\{\xi:
2^{L_2j+M_2}/8\leq |\xi|\leq  19\cdot 2^{L_2j+M_2}/8\}$ and $\Phi_{3, j}$ be a Schwartz
function whose Fourier transform is a bump function adapted to
$\w'_{3,j}$ such that $\wh\Phi_{3,j}(\xi)=1$ for all $2^{L_2j+M_2}/4  \leq
|\xi|\leq 9\cdot 2^{L_2j+M_2}/4$.

If $  2^{L_1j+M_1}/8 \leq 2^{L_2j+M_2}\leq
  8\cdot 2^{L_1j+M_1}$, let
$\w'_{3,j}=\{\xi: |\xi|\leq 18\cdot \max\{2^{L_1j+M_1}, 2^{L_2j+M_2}\}\}$
and $\Phi_{3, j}$ be a Schwartz function whose
Fourier transform is  a bump function adapted to $\w'_{3,j}$ such
that $\wh\Phi_{3,j}(\xi)=1$ for all $  |\xi| \leq 17\cdot
\max\{2^{L_1j+M_1}, 2^{L_2j+M_2}\}$.
Let $\Phi_{3,j,m}=\Phi_{3,j} $,
$f_{3, j, m}(x)=f_{3,j,0}(x)=f_3*\Phi_{3,j, 0}(x)$. Define a trilinear form
$\La_{L_1, L_2, M_1, M_2, m}$ by
\begin{equation}\label{defoftriformlam}
\La_{L_1, L_2, M_1, M_2, m}(f_1, f_2, f_3)
 =\int \sum_{j\in\mathbb Z}\prod_{\ell=1}^3 f_{\ell, j, m}(x) dx\,.
\end{equation}
Clearly $ \La_{L_1, L_2, M_1, M_2, m} = \int \Pi_{L_1, L_2,
M_1, M_2, m}(f_1, f_2)(x)f_3(x)dx$.

We will prove the following two lemmata.

\begin{lemma}\label{uniestp20}
Let $p_1, p_2> 2$ and $1< r < 2$ such that $1/p_1+1/p_1=1/r$. Let
$F_1, F_2, F_3$ be measurable sets in $\mathbb R$. There exists a
constant $C$ independent of $F_1, F_2, F_3, f_1, f_2, f_3$, $M_1$,
$M_2$, $m$ such that
\begin{equation}\label{largepest20}
\big|\La_{L_1, L_2, M_1, M_2,m}(f_1, f_2, f_3)\big|\leq
Cm|F_1|^{1/p_1}|F_2|^{1/p_2}|F_3|^{1/r'}\,
\end{equation}
holds for all $f_1\in X(F_1)$, $f_2\in X(F_2)$ and $f_3\in X(F_3)$.
\end{lemma}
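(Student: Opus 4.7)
The plan is to decompose the trilinear form according to the relative sizes of the two frequency windows $\w_{1,j}$ and $\w_{2,j}$, and bound each piece separately.

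Partition $\mathbb{Z}=\mathcal{J}_{<}\cup\mathcal{J}_{\sim}\cup\mathcal{J}_{>}$ where $\mathcal{J}_{<}=\{j:2^{L_2j+M_2}<2^{L_1j+M_1}/8\}$, $\mathcal{J}_{>}=\{j:2^{L_1j+M_1}<2^{L_2j+M_2}/8\}$, and $\mathcal{J}_{\sim}$ is the comparable regime. Write $\La_{L_1,L_2,M_1,M_2,m}=\La_{<}+\La_{\sim}+\La_{>}$. The construction of $\w_{3,j}'$ in the definition of $\La$ is arranged precisely so that on the two separated regimes the window of the $f_3$-factor dominates the other two, while on $\mathcal{J}_{\sim}$ all three windows are comparable. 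Consequently, for $\La_{<}$ and $\La_{>}$, Proposition~\ref{uniestp2good} (the ``simple case'' proved just before this lemma) applies and yields a bound $|\La_{<}|+|\La_{>}|\leq C|F_1|^{1/p_1}|F_2|^{1/p_2}|F_3|^{1/r'}$ with $C$ independent of $m$.

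For $\La_{\sim}$, the key observation is the pointwise identity
\[
\Phi_{\ell,j,m}(x)=2^{L_\ell j+M_\ell}\Phi_1\!\bigl(2^{L_\ell j+M_\ell}x+2^m\bigr),
\]
so that $f_{\ell,j,m}(x)=(f_\ell*\Phi_{\ell,j})(x+2^m/2^{L_\ell j+M_\ell})$ is just a translate, by $2^m$ wavelengths of its own scale, of the unmodulated Littlewood--Paley projection $\tilde{f}_{\ell,j}=f_\ell*\Phi_{\ell,j}$. Using the elementary inequality $\bigl|\sum_j a_jb_jc_j\bigr|\leq\sup_j|c_j|\bigl(\sum_j|a_j|^2\bigr)^{1/2}\bigl(\sum_j|b_j|^2\bigr)^{1/2}$ with $c_j=f_{3,j,0}$ and then H\"older in the exponents $(p_1,p_2,r')$ gives
\[
|\La_{\sim}|\leq\Big\|\sup_j|f_{3,j,0}|\Big\|_{r'}\Big\|\Big(\sum_j|f_{1,j,m}|^2\Big)^{1/2}\Big\|_{p_1}\Big\|\Big(\sum_j|f_{2,j,m}|^2\Big)^{1/2}\Big\|_{p_2}.
\]
The first factor is dominated by the Hardy--Littlewood maximal function of $f_3$, giving $\leq C|F_3|^{1/r'}$ with no $m$-dependence. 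The other two factors are shifted Littlewood--Paley square functions of $f_\ell$, which after Khintchine's inequality reduce to multiplier estimates for $\sum_j\varepsilon_j\Phi_{\ell,j,m}$; the modulation contributes derivatives of order $O(2^{km})$ at differentiation level $k$ on the Fourier side, so interpolating the resulting Mikhlin-type bound against the trivial $L^2$ bound yields an $L^{p_\ell}$-norm of order $O(m)$ for any $1<p_\ell<\infty$. Putting these estimates together gives $|\La_{\sim}|\leq Cm\cdot|F_1|^{1/p_1}|F_2|^{1/p_2}|F_3|^{1/r'}$ (absorbing a possible extra factor of $m$ by placing the supremum on the unmodulated $f_3$-factor, which is free of shift).

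The main obstacle is obtaining a \emph{linear} rather than polynomial dependence on $m$ in the comparable regime; this is where the hypothesis $p_1,p_2>2$ is essential, since it leaves enough room for the supremum-plus-two-square-functions splitting above, with the single $m$-cost concentrated on the two shifted square functions (the sup sitting on the unshifted $f_3$). For the subsequent application to Theorem~\ref{para2est}, any polynomial-in-$m$ bound would in fact suffice, since $m^C\leq 2^{\e m}$ for large $m$ and every $\e>0$.
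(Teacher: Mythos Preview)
Your overall decomposition into the separated regimes $\mathcal{J}_<,\mathcal{J}_>$ and the comparable regime $\mathcal{J}_\sim$ matches the paper's split $\bS^{(1)}\cup\bS^{(2)}$, and you are right that Proposition~\ref{uniestp2good} disposes of the separated part with a uniform constant. The gap is in your treatment of $\La_\sim$.

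The step that fails is the claimed $O(m)$ bound for the shifted Littlewood--Paley square function
\[
\Big\|\Big(\sum_j|\tilde f_{\ell,j}(\cdot+2^{m}/2^{L_\ell j+M_\ell})|^2\Big)^{1/2}\Big\|_{p_\ell}.
\]
After Khintchine you are looking at the multiplier $\sigma(\xi)=\sum_j\varepsilon_j e^{2\pi i 2^{m-L_\ell j-M_\ell}\xi}\widehat\Phi_1(\xi/2^{L_\ell j+M_\ell})$, for which indeed $|\xi^kD^k\sigma|\lesssim 2^{km}$. But this means the H\"ormander--Mikhlin hypothesis is violated by a factor $2^m$, so the standard Calder\'on--Zygmund machinery gives an $L^{p_\ell}$ operator norm of order $2^{c(p_\ell)m}$, not $m$; interpolating against the trivial $L^2$ bound only moves you along the line between $O(1)$ at $p=2$ and $O(2^{cm})$ at the endpoint, which is still exponential in $m$ for any fixed $p_\ell\neq 2$. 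No interpolation of this type can convert exponential growth into linear growth. (Note also that even if each shifted square function did cost $O(m)$, your splitting uses two of them, so you would land at $O(m^2)$; the parenthetical about ``absorbing'' the extra $m$ into the unshifted $f_3$-factor does not work, since both $f_1$ and $f_2$ carry $j$-dependent shifts.)

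The paper does \emph{not} try to bound a shifted square function globally. Instead it runs a time-frequency argument on $\bS^{(2)}$: it introduces $L^2$-based tree sizes (Definition after~(\ref{defS(2)})), proves the single-tree estimate~(\ref{atreesize}) with no $m$-loss, and then shows in Lemma~\ref{treeout} that removing one maximal tree $\bT$ from $\bP\cup\bT$ costs at most $Cm\prod_\ell\size^*_\ell(\bP\cup\bT)|I_\bT|$. The factor $m$ there is genuinely combinatorial: it counts the $O(m)$ scales $j\in[j_\bT,j_\bT+10m]$ for which the shifted interval $I(m_{j1})$ can lie inside $I_\bT$ while $I(m_{j2})$ lies outside, producing cross-terms that do not decay. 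Feeding Lemma~\ref{treeout} into the organization Lemma~\ref{prilem} and summing the resulting tree decomposition~(\ref{gooddep2})--(\ref{sizekaest123p2}) gives the linear $m$ in~(\ref{largepest2}). This mechanism is localized tree by tree and never confronts a global shifted square function; that is precisely how the exponential loss is avoided.
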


\begin{lemma}\label{uniestp110}
Let $\e$ be any positive number, $1<p<2$ and $F_1, F_2, F_3$ be
measurable sets in $\mathbb R$ such that $|F_3|=1$. 
Suppose (\ref{2large1}) holds for all $j$'s. Then 
there is a
subset $F_3'\subset F_3$ with $|F'_3|\geq |F_3|/2$ such that for all
$p_1, p_2\geq p$ with $1/p_1+1/p_2\geq 1$, and all functions $f_1\in
X(F_1)$, $f_2\in X(F_2)$,  $f_3\in X(F_3)$, the following inequality
holds.
\begin{equation}\label{psmallest110}
\big|\La_{L_1, L_2, M_1, M_2, m}(f_1, f_2, f_3)\big|\leq C2^{\e
m}|F_1|^{1/p_1}|F_2|^{1/p_2}
 \, ,
\end{equation}
where $C$ is a constant independent of $\bS$, $F_1, F_2, F_3, f_1,
f_2, f_3, M_1, M_2, m$.
\end{lemma}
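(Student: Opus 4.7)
The plan is to adapt the time-frequency framework from Section \ref{para2}, proving Lemma \ref{uniestp110} by closely mirroring the proof of Lemma \ref{ftriest}, while carefully tracking the dependence on the modulation parameter $m$ and using condition (\ref{2large1}) at the critical junctures to ensure that this dependence is only polynomial in $m$. Since $m^N \leq C_\e 2^{\e m}$ for any fixed $N$ and any $\e>0$, polynomial control is enough to yield the claimed bound.

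First I would carry out the preliminary reduction as in Sections 4.2--4.3. Define the exceptional set
\[
\Omega = \bigcup_{\ell=1}^3 \{ x \in \mathbb R : M_p(M\Id_{F_\ell})(x) > C_0 |F_\ell|^{1/p}\},
\]
take $F_3' = F_3 \setminus \Omega$ (of measure at least $|F_3|/2$ for $C_0$ large), and via an analogue of Lemma \ref{lemomega} reduce to bounding $|\La_{\bS(\Omega)}(f_1, f_2, f_3)|$ where $\bS(\Omega) = \{s : I_s \not\subset \Omega\}$ and the trilinear form is (\ref{defoftriformlam}). I would then introduce sizes, $\zeta$-semi-norms, and the partition into $\bS^{(1)}(\Omega)$ and $\bS^{(2)}(\Omega)$ for the modulated functions $f_{\ell,j,m}$ exactly as in Definitions \ref{defzetanorm}--\ref{defofsize1}. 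The purely combinatorial ingredients -- the convexity lemma \ref{convexitylem}, the shadow estimate \ref{shadowest}, and the organization lemma \ref{prilem} -- transfer verbatim since they only see the time intervals $I_s$ and the scale parameters $k_j$, not the modulation.

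Next I would establish the analogues of the technical lemmas \ref{Lpest}--\ref{goodbmo2} and the tree estimate \ref{sizelem} for the $m$-modulated setup. Since $\Phi_{\ell,j,m}$ has Fourier support exactly $\w_{\ell,j}$ (independent of $m$), the Littlewood--Paley square function estimate still applies; the modulation $e^{2\pi i 2^m \xi / 2^{L_\ell j+M_\ell}}$ only affects the derivative bounds used in the Calder\'on--Zygmund argument, and as observed in the proof of Lemma \ref{Lpest} (where shifting $n_\ell$ costs at most $(1+|n_\ell|)^5$) the resulting loss is polynomial in $m$. The same polynomial loss propagates through the BMO, size, and $\Delta^*_\ell$ estimates. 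The role of (\ref{2large1}) is critical in the hard case $|\w_{2,j}| > |\w_{1,j}|/6$ and when controlling the $\zeta$-semi-norm: the shift $2^m/2^{L_1j+M_1}$ in physical space induced by modulating $\Phi_{1,j,m}$ is dominated, under (\ref{2large1}), by $1/2^{L_2j+M_2}$, which is the scale of $\Phi_{2,j,m}$. This keeps all telescoping differences in the spirit of Lemma \ref{diffest23} bounded by a polynomial factor in $m$ rather than by $2^{Cm}$.

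The main obstacle will be executing this bookkeeping through the frequency-shifted $\zeta$-semi-norm used in the second case of Definition \ref{defzetanorm}, because $\zeta(j,M,K)$ differs from $j$ by an amount of order $m$, and a naive estimate at such a shifted scale costs $2^{Cm}$. Here (\ref{2large1}) must be invoked to show that $\zeta(j,M,K)$ still lies in a range where the function $f_\ell$ can be controlled by the corresponding size at polynomial cost. Once the polynomial-in-$m$ versions of Lemmas \ref{Lpest}--\ref{sizelem} are in hand, the final assembly is the same as the proof of Lemma \ref{ftriest}: apply the organization lemma inductively to obtain
\[
\bS^{(\kappa)}(\Omega) = \bigcup_\sigma \bS^{(\kappa)}_\sigma
\]
with $\ct(\bS^{(\kappa)}_\sigma) \leq C\sigma^{-p}$ and $\size^*_\ell(\bS^{(\kappa)}_\sigma) \leq \min\{1, \sigma |F_\ell|^{1/p}\}$; sum the tree estimates; and conclude
\[
\big|\La_{\bS(\Omega)}(f_1, f_2, f_3)\big| \leq C m^N |F_1|^{1/p_1} |F_2|^{1/p_2} \leq C_\e 2^{\e m} |F_1|^{1/p_1} |F_2|^{1/p_2}.
\]
The multilinear interpolation argument of \cite{MTT2} then handles the full range $p_1, p_2 \geq p$ with $1/p_1 + 1/p_2 \geq 1$, yielding (\ref{psmallest110}).
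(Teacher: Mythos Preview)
Your proposal contains a fatal gap: the claim that the loss from the modulation is polynomial in $m$ is incorrect. You write that ``shifting $n_\ell$ costs at most $(1+|n_\ell|)^5$'' and conclude ``the resulting loss is polynomial in $m$,'' but here $n_\ell=2^m$, so $(1+|n_\ell|)^5\sim 2^{5m}$ is polynomial in $n_\ell$ and \emph{exponential} in $m$. Indeed, the paper remarks explicitly (just after Theorem~\ref{para2est}) that plugging $n_\ell=2^m$ into Theorem~\ref{para0uniest} yields only $O(2^{10m})$, which is precisely what your scheme produces. Your use of (\ref{2large1}) is also inverted: the physical shift of $f_1$ is $2^m/2^{L_1j+M_1}$, and under (\ref{2large1}) this is at least $2^{2m}/2^{L_2j+M_2}$, so it is \emph{not} dominated by the scale of $\Phi_{2,j,m}$. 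What (\ref{2large1}) actually controls is the shift of $f_2$, namely $2^m/2^{L_2j+M_2}\le 2^{-(L_1j+M_1)}\sim 2^{-k_j}$, i.e.\ at most one unit at the finest scale.

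The paper's argument is structurally different from a rerun of Section~\ref{para2}. It first performs a change of variables converting the frequency modulation into spatial translations $\ti\Tr_{\ell,j,m}$, with $\ti\Tr_{2,j,m}(x)=x$. It then replaces the exceptional-set cutoff by a scale-dependent one built from $\Omega_j=\{x\in\Omega:\operatorname{dist}(x,\Omega^c)\ge 2^{\e^2 m}2^{-k_j}\}$; this slight enlargement is what permits the size bound $\size^*_{2,m}\le 2^{\e^2 m}|F_2|^{1/p}$ (Lemma~\ref{Lpestm}), while $\size^*_{1,m}$ is allowed to be as large as $2^{m}|F_1|^{1/p}$. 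The tree and tree-removal estimates (Lemma~\ref{treeoutp11}, (\ref{sizeestp11})) then lose only a factor of $m$, and the final summation over $\sigma$ --- now ranging up to $2^m$ rather than $1$ --- exploits the asymmetry $\beta_1=1$, $\beta_2=\e^2$ to close at $C2^{\e m}$. Without the translation picture and the $\e^2 m$-modified cutoff, the summation cannot be closed below $2^{Cm}$.
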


Theorem \ref{para2est} is a consequence of these two lemmas
by using interpolation and duality. We also have a corollary from 
Lemma \ref{uniestp20} by a simple interpolation.

\begin{corollary}\label{cor91}
Let $p_1, p_2> 2$ and $1< r < 2$ such that $1/p_1+1/p_1=1/r$.
There exists a
constant $C$ independent of $F_1, F_2, F_3, f_1, f_2, f_3$, $M_1$,
$M_2$, $m$ such that
\begin{equation}\label{largepest200}
\big\|\Pi_{L_1, L_2, M_1, M_2,m}(f_1, f_2)\big\|_r\leq
Cm \|f_1\|_{p_1}\|f_2\|_{p_2}\,
\end{equation}
holds for all $f_1\in L^{p_1}$ and $f_2\in L^{p_2}$.
\end{corollary}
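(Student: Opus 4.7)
} The plan is to derive Corollary \ref{cor91} from Lemma \ref{uniestp20} via a standard multilinear real interpolation, essentially in the spirit of the interpolation argument in \cite{MTT2} that was already invoked to pass from Lemma \ref{triest0} to Theorem \ref{para0uniest}. The key point is that Lemma \ref{uniestp20} provides a restricted-type estimate on the trilinear form with constant $Cm$ that holds on an \emph{open} region of exponent triples, so no endpoint surgery is required.

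\textbf{Step 1 (trilinear to bilinear).} Fix $p_1,p_2>2$ and $1<r<2$ with $1/p_1+1/p_2=1/r$, and fix measurable sets $F_1,F_2,F_3$ and $f_\ell\in X(F_\ell)$. Applying Lemma \ref{uniestp20} with $f_3(x)=\operatorname{sgn}\bigl(\Pi_{L_1,L_2,M_1,M_2,m}(f_1,f_2)(x)\bigr)\,\Id_{F_3}(x)\in X(F_3)$ yields
\begin{equation*}
\int_{F_3}\bigl|\Pi_{L_1,L_2,M_1,M_2,m}(f_1,f_2)(x)\bigr|\,dx
\leq Cm\,|F_1|^{1/p_1}|F_2|^{1/p_2}|F_3|^{1/r'}.
\end{equation*}
Taking $F_3=\{x:|\Pi_{L_1,L_2,M_1,M_2,m}(\Id_{F_1},\Id_{F_2})(x)|>\lambda\}$ then gives the restricted weak-type $(p_1,p_2,r)$ inequality
\begin{equation*}
\bigl|\{x:|\Pi_{L_1,L_2,M_1,M_2,m}(\Id_{F_1},\Id_{F_2})(x)|>\lambda\}\bigr|
\leq (Cm)^r\lambda^{-r}|F_1|^{r/p_1}|F_2|^{r/p_2},
\end{equation*}
i.e.\ a restricted weak-type bound $L^{p_1,1}\times L^{p_2,1}\to L^{r,\infty}$ with operator bound $O(m)$ that is uniform in $L_1,L_2,M_1,M_2$.

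\textbf{Step 2 (open region and interpolation).} Because Lemma \ref{uniestp20} is valid on the entire open set
\begin{equation*}
\Bigl\{(p_1,p_2,r): p_1,p_2>2,\ 1<r<2,\ \tfrac{1}{p_1}+\tfrac{1}{p_2}=\tfrac{1}{r}\Bigr\},
\end{equation*}
I can pick finitely many triples $(p_1^{(i)},p_2^{(i)},r^{(i)})$ inside this region whose convex hull (via the two-dimensional constraint $1/p_1+1/p_2=1/r$) contains the target triple in its interior. At each vertex the restricted weak-type bound of Step~1 holds with a constant of size $Cm$. Applying the multilinear Marcinkiewicz interpolation theorem (the same tool used after Lemma \ref{largerest} and formulated in \cite{MTT2}), I upgrade these restricted weak-type estimates to the strong-type bound
\begin{equation*}
\bigl\|\Pi_{L_1,L_2,M_1,M_2,m}(f_1,f_2)\bigr\|_r\leq Cm\,\|f_1\|_{p_1}\|f_2\|_{p_2},
\end{equation*}
which is (\ref{largepest200}). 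The interpolation contributes only a constant factor depending on the distance of the target triple from the boundary of the open region, so the linear dependence on $m$ is preserved.

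\textbf{Main obstacle.} There is no real obstacle here: once Lemma \ref{uniestp20} is in hand, the argument is a straightforward unpacking of duality plus the multilinear interpolation package that the paper has already used twice. The only minor point to track is that the interpolation constant must be absorbed into $C$ without multiplying $m$; this is automatic because one interpolates between bounds of the same form $Cm\cdot|F_1|^{1/p_1^{(i)}}|F_2|^{1/p_2^{(i)}}$ at each vertex.
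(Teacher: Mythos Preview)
Your proposal is correct and matches the paper's approach: the paper states only that Corollary \ref{cor91} follows from Lemma \ref{uniestp20} ``by a simple interpolation,'' and your duality-plus-multilinear-Marcinkiewicz argument is precisely the standard way to unpack that phrase. The only cosmetic wrinkle is the switch from $f_\ell\in X(F_\ell)$ to $\Id_{F_\ell}$ in Step~1, but since the restricted-type bound applies to all $f_\ell\in X(F_\ell)$ this is harmless.
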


\subsection{Proof of Lemma \ref{uniestp20}}\label{subpl}

For $\ell\in\{1,2,3\}$, let $\Tr_{\ell,j, m}$ be a translation
function defined by
\begin{equation}\label{defoftr}
\Tr_{\ell,j,m}(x)=x+{m_{j\ell}}\,,
\end{equation}
where $m_{j\ell}=2^{m-jL_\ell-M_\ell}$ if $\ell\in\{1,2\}$ and
$m_{j3}=0$. Notice that
$f_{\ell,j,m}(x)=f_{\ell,j,0}(\Tr_{\ell,j,m}(x))$. Write $\La_{L_1,
L_2, M_1, M_2, m}$ as
$$
\La_{L_1, L_2, M_1, M_2, m}(f_1, f_2, f_3)= \int_{\mathbb
R}\prod_{\ell=1}^{3}\sum_{(j, n)\in\mathbb Z\times \mathbb Z}
\Id^*_{j,n}\big(\Tr_{\ell,j,m}(x)\big)f_{\ell,j,
0}\big(\Tr_{\ell,j,m}(x)\big)
 dx\,.
$$

For $\bS\subset \mathbb Z(\gamma)\times \mathbb Z$ we define
\begin{equation}\label{deflam}
\La_{\bS, m}(f_1, f_2, f_3)=\int_{\mathbb R}\sum_{j\in\mathbb Z}
\prod_{\ell=1}^{3}\sum_{n\in\bS_j}
F_{\ell,j, n, m}(x) dx\,,
\end{equation}
where $\bS_j=\{n: (j,n)\in \bS\}$ and  $F_{\ell,j,n,m}$ is defined by
\begin{equation}\label{defFljnm}
F_{\ell, j, n, m}(x)=
\big( (\Id^*_{j,n}f_{\ell,j, 0})\circ\Tr_{\ell,j,m}\big)(x)\,.
\end{equation}

Let $k_{ j\ell}$ be an integer such that $|\w'_{\ell,j}|\sim 2^{k_{
j\ell}}$. For $s=(j,n)\in\bS$, let $k_s=k_j=\min_\ell{k_{j\ell}}$.
The time interval of $s$ is defined by $I_s=[2^{-k_s}n,
2^{-k_s}(n+1)]$. We then can define a tree in $\bS$ as in Section
\ref{para2}. To prove Lemma \ref{uniestp20}, it is sufficient to
prove the following lemma.

\begin{lemma}\label{uniestp2}
Let $p_1, p_2> 2$ and $1< r < 2$ such that $1/p_1+1/p_1=1/r$.
Let $F_1, F_2, F_3$ be measurable sets in $\mathbb R$.
There exists a constant $C$ independent of $F_1, F_2, F_3, f_1, f_2, f_3$,
$M_1$, $M_2$, $m$
such that
\begin{equation}\label{largepest2}
\big|\La_{\bS, m}(f_1, f_2, f_3)\big|\leq Cm|F_1|^{1/p_1}|F_2|^{1/p_2}|F_3|^{1/r'}\,
\end{equation}
holds for all $f_1\in X(F_1)$, $f_2\in X(F_2)$ and $f_3\in X(F_3)$.
\end{lemma}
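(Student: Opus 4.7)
The plan is to mirror the admissible trilinear form / square function strategy of Section \ref{para1}, now in the translated setting. Since $p_1,p_2>2$ and $p_3=r'>2$ (as $1<r<2$), we are in the Banach range, so no time--frequency analysis is needed; all of the work lies in tracking the cost of the shifts $\Tr_{\ell,j,m}$. First I would reduce to a single residue class in $\mathbb{Z}(\gamma)\times\mathbb{Z}$ modulo $2^{100}$, exploit the identity $f_{\ell,j,m}(x)=f_{\ell,j,0}(\Tr_{\ell,j,m}(x))$ to rewrite $\La_{\bS,m}$ in terms of the untranslated pieces, and split $\bS$ according to whether $|\w'_{1,j}|$ and $|\w'_{2,j}|$ differ by a factor of at least $8$ (call this subfamily $\bS^{(a)}$) or are comparable (the complement $\bS^{(b)}$).

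On $\bS^{(a)}$ the Fourier support of $\Phi_{3,j}$ is pinned to the larger of $|\w'_{1,j}|,|\w'_{2,j}|$, and a telescoping argument analogous to Lemma \ref{telelem1} reduces $\La_{\bS^{(a)},m}$ to an admissible trilinear form in the sense of Definition \ref{goodtri}. Applying Cauchy--Schwarz in one slot $\ell_0$ and H\"older in the others dominates the trilinear form by
$$\Bigl\|\sup_{(j,n)\in\bS^{(a)}} |F_{\ell_0,j,n,m}|\Bigr\|_{p_{\ell_0}}\, \prod_{\ell\neq\ell_0}\Bigl\|\Bigl(\sum_{(j,n)\in\bS^{(a)}}|F_{\ell,j,n,m}|^2\Bigr)^{1/2}\Bigr\|_{p_\ell}.$$
Since each $F_{\ell,j,n,m}$ is a translate of its untranslated counterpart and the Hardy--Littlewood maximal function is translation invariant, the sup factor is harmless and contributes $|F_{\ell_0}|^{1/p_{\ell_0}}$ by essentially the reasoning of Lemma \ref{largerest}. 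All of the $m$-dependence is then shifted onto the shifted Littlewood--Paley square functions in the remaining slots. The overlap piece $\bS^{(b)}$ is handled by the same telescoping machinery of Lemma \ref{telelem1} into $O(1)$ admissible forms, each again controlled by the same square function estimates.

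The main obstacle is securing \emph{linear} (rather than exponential) dependence on $m$ in the shifted Littlewood--Paley inequality
$$\Bigl\|\Bigl(\sum_j|f_{\ell,j,m}|^2\Bigr)^{1/2}\Bigr\|_{p_\ell}\leq C(1+m)\|f_\ell\|_{p_\ell}, \qquad p_\ell\geq 2.$$
A naive vector-valued Calder\'on--Zygmund analysis applied to the kernel $\{\Phi_{\ell,j,m}\}_j$ loses $(1+2^m)^N$, since $e^{2\pi i 2^m\xi}$ has $C^N$-norm of order $2^{mN}$. The remedy is to exploit the Bernstein/Plancherel--Polya property of the bandlimited pieces: since $f_{\ell,j,m}(x)=f_{\ell,j,0}(x+m_{j\ell})$ with $|m_{j\ell}|\cdot 2^{L_\ell j+M_\ell}=2^m$, a shift by one natural unit preserves $L^{p_\ell}$-norms up to a dimensional constant (via the Hardy--Littlewood maximal function), and iterating this bound along the dyadic decomposition $2^m=\sum_{k=0}^{m-1}2^k+1$ composes additively to give the $O(m)$ loss. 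Once this core inequality is in place, combining the contributions from $\bS^{(a)}$ and $\bS^{(b)}$ and invoking restricted weak-type interpolation yields \eqref{largepest2}.
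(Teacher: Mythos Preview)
Your proposal has a genuine gap at its core: the ``shifted Littlewood--Paley'' inequality
\[
\Bigl\|\Bigl(\sum_j|f_{\ell,j,m}|^2\Bigr)^{1/2}\Bigr\|_{p_\ell}\leq C(1+m)\|f_\ell\|_{p_\ell}
\]
is not established by your iteration argument, and I do not see a way to make it give a linear loss. Writing $2^m=1+\sum_{k=0}^{m-1}2^k$ decomposes the shift $m_{j\ell}=2^m\cdot 2^{-k_{j\ell}}$ into $m+1$ pieces, but those pieces are \emph{not} unit shifts at scale $2^{-k_{j\ell}}$ --- they range in size from $2^{-k_{j\ell}}$ up to $2^{m-1}\cdot 2^{-k_{j\ell}}$. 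Even if one could arrange $m$ genuine unit shifts, the Fefferman--Stein vector-valued maximal inequality applied $m$ times composes \emph{multiplicatively}, yielding $C^m$ rather than $Cm$. The ``additive'' composition you allude to would require something like $\|S_{m+1}f\|_{p_\ell}\leq \|S_m f\|_{p_\ell}+C\|f\|_{p_\ell}$, but Bernstein gives $|f_{\ell,j,0}(x+2h_j)-f_{\ell,j,0}(x+h_j)|\lesssim |h_j|\,2^{k_{j\ell}}\,Mf_{\ell,j,0}(\cdots)$, and $|h_j|\,2^{k_{j\ell}}=2^m$ already, so this route loses exponentially as well.

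The paper avoids the shifted $\ell^2$-square function altogether. On your $\bS^{(a)}$ (the paper's $\bS^{(1)}$) all three frequency windows are annular, and the paper (Proposition \ref{uniestp2good}) applies H\"older in both $j$ and $x$ with exponents $p_1,p_2,r'$ to get
\[
\prod_{\ell}\Bigl\|\Bigl(\sum_j|f_{\ell,j,0}\circ\Tr_{\ell,j,m}|^{p_\ell}\Bigr)^{1/p_\ell}\Bigr\|_{p_\ell}
=\prod_{\ell}\Bigl(\sum_j\|f_{\ell,j,0}\|_{p_\ell}^{p_\ell}\Bigr)^{1/p_\ell},
\]
the equality holding because the $L^{p_\ell}$-norm is exactly translation invariant; then $\ell^{p_\ell}\hookleftarrow\ell^2$ (as $p_\ell\ge 2$) and ordinary Littlewood--Paley give a bound \emph{uniform in $m$}. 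On $\bS^{(b)}$ (the paper's $\bS^{(2)}$) the third window contains the origin, so this trick fails, and the paper runs a tree decomposition: the single-tree estimate (\ref{atreesize}) is uniform in $m$, and the entire $m$-loss enters through Lemma \ref{treeout}, where removing a maximal tree $\bT$ from $\bP\cup\bT$ produces cross terms with one factor localized to $\bT$ and another to $\bP$; the $j$-dependent shifts $m_{j1},m_{j2}$ cause a mismatch in spatial localization that is summable only after paying a factor of $m$ (counting the scales $j_\bT\le j\le j_\bT+10m$). The organization Lemma \ref{prilem} then sums over trees. This mechanism for the $m$-loss is entirely different from the one you propose.
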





By scaling invariance, we can assume that $|F_3|=1$.
We partition $\bS$ into two subsets $\bS^{(1)}$ and $\bS^{(2)}$, where
\begin{equation}\label{defS(1)}
\bS^{(1)}= \{(j,n)\in \bS: |\w'_{2,j}|\leq |\w'_{1,j}|/10\,\, {\rm or}\,\,
 |\w'_{1,j}|\leq |\w'_{2,j}|/10\}
\end{equation}
\begin{equation}\label{defS(2)}
\bS^{(2)}= \bS\backslash \bS^{(1)}\,.
\end{equation}
We should change the definitions of sizes of trees in $\bS$.

\begin{definition}\label{defofnorm111}
Let $(j,n)\in\bS$ and $\ell\in\{1, 2,3\}$. Define a semi-norm
$\big\|f_\ell\big\|_{j,n}$  by
\begin{equation}\label{defnormm}
\big\|f_{\ell}\|_{j,n} =\frac{1}{|I_s|^{1/2}}
\big\|\Id^{**}_{j,n}f_{\ell, j,0}\big\|_2
   + \frac{1}{|I_s|^{1/2}}\big\|2^{-k_{j\ell}}\Id^{**}_{j,n}
 Df_{\ell, j, 0}\big\|_2\,,
\end{equation}
where $Df_{\ell,j, 0}$ is the derivative of $f_{\ell,j,0}$.
\end{definition}

\begin{definition}
For $\ell\in\{1,2, 3\}$ and a tree  $\bT$,
let $(j_\bT, n_\bT)$ be the top of the tree $\bT$.  And define
\begin{equation}\label{defdelta1111}
\Delta^*_{\ell}(\bT)(x)=\bigg(\sum_{(j,n)\in\bT}
\big|\Id^{*}_{j,n}f_{\ell, j,0}(x)\big|^2 \bigg)^{1/2}\,.
\end{equation}

If $\bT $ is a tree in $\bS^{(1)}$, we
 define
\begin{equation}\label{size12p2}
\size_{\ell}(\bT) = \frac{1}{|I_\bT|^{1/2}}\big\|\Delta^*_{\ell}(\bT)
\big\|_2 + \big\|f_\ell\big\|_{j_\bT,n_\bT}\,,
\end{equation}
for all $\ell\in\{1,2,3\}$.

If $\bT $ is a tree in $\bS^{(2)}$, define $\size_\ell(\bT)$
by (\ref{size12p2}) only for $\ell\in\{1,2\}$. For $\ell=3$,
we define the size by
\begin{equation}\label{size3p2}
\size_{3}(\bT) = \big\|f_3\big\|_{j_\bT,n_\bT}\,,
\end{equation}

Let ${\bf P}$ be a subset of $\bS$. Define the $\ell$-$\size^*$
of $\bT$ by
\begin{equation}
\size^*_\ell({\bf P})=\sup_{\bT: \bT\subset {\bf P}}\size_\ell(\bT)\,,
\end{equation}
where $\bT$ ranges over all trees in ${\bf P}$.
\end{definition}

One should notice that for $\La_{\bS^{(1)},m}$ we have a uniform estimate
for $p_1, p_2>2$ and $1<r<2$. We state it as follow

\begin{proposition}\label{uniestp2good}
Let $p_1, p_2>2$ and $1<r<2$ with $1/p_1+1/p_2=1/r$.
Let $f_1\in L^{p_1}$, $f_2\in L^{p_2}$ and $f_3\in L^{r'}$. Then
\begin{equation}\label{goodp2est}
\big|\La_{\bS^{(1)},m}(f_1, f_2, f_3)\big|
\leq C\|f_1\|_{p_1}\|f_2\|_{p_2}\|f_3\|_{r'}\,,
\end{equation}
where $C$ is independent of $m$, $f_1, f_2, f_3$.
\end{proposition}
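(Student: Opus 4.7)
The plan is to exploit the strong separation of scales built into $\bS^{(1)}$: by its definition every $(j,n) \in \bS^{(1)}$ satisfies either $|\w'_{2,j}| \leq |\w'_{1,j}|/10$ or $|\w'_{1,j}| \leq |\w'_{2,j}|/10$, so two of the three functions $f_{1,j,m}, f_{2,j,m}, f_{3,j,0}$ lie at the same frequency scale (the ``high'' functions) while the third is at a much smaller scale (the ``low'' function). This is the classical paraproduct regime; combined with the assumption $p_1, p_2, r' > 2$, we land entirely in the range where Rubio de Francia type square function inequalities are available with constants that see only the frequency supports and not the fine structure of the bump functions.

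First I would partition $\bS^{(1)} = \bS^{(1),A} \cup \bS^{(1),B}$ according to which inequality holds; Case B is just Case A with the roles of $f_1$ and $f_2$ swapped, so I treat only Case A in which $|\w'_{2,j}| \leq |\w'_{1,j}|/10$. In this case $\w'_{1,j}$ and $\w'_{3,j}$ both have scale $2^{L_1 j + M_1}$ and give lacunary families in $j$, while $f_{2,j,m}$ is a low-pass of $f_2$ at the coarser scale $2^{L_2 j + M_2}$. After the partition-of-unity factors $\sum_n \Id^*_{j,n} \circ \Tr_{\ell,j,m}$ are absorbed into harmless cutoffs (they sum to smoothed indicators of translated shadows, essentially $1$), the task reduces to bounding $\int \sum_j f_{1,j,m}(x) f_{2,j,m}(x) f_{3,j,0}(x) \, dx$.

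The central estimate is then a Cauchy--Schwarz in the $j$-sum that puts the low function in the supremum slot, followed by H\"older with exponents $p_1, p_2, r'$ (note $1/p_1 + 1/p_2 + 1/r' = 1$):
\[
|\La_{\bS^{(1),A}, m}(f_1,f_2,f_3)| \leq \bigg\|\bigg(\sum_j |f_{1,j,m}|^2\bigg)^{1/2}\bigg\|_{p_1} \bigg\|\sup_j |f_{2,j,m}|\bigg\|_{p_2} \bigg\|\bigg(\sum_j |f_{3,j,0}|^2\bigg)^{1/2}\bigg\|_{r'}.
\]
The two square functions are bounded by $C\|f_1\|_{p_1}$ and $C\|f_3\|_{r'}$ with $m$-independent constants: the modulation factor $e^{2\pi i 2^m \xi / 2^{L_\ell j + M_\ell}}$ only twists the phase of $\wh\Phi_{\ell,j,m}$ without changing the lacunary supports $\w'_{\ell,j}$, so Rubio de Francia's theorem in the range $p > 2$ gives constants depending only on the exponent. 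For the middle factor I would combine the pointwise bound $|f_{2,j,m}(x)| \leq C(Mf_2)(x + m_{j2})$ with a lacunary telescoping that writes $f_{2,j,m}$ as a partial sum of disjoint-frequency Littlewood--Paley pieces of $f_2$; the sup over $j$ of these partial sums is controlled by a maximal Littlewood--Paley function that is $L^{p_2}$-bounded by $C\|f_2\|_{p_2}$ uniformly in $m$ for $p_2 > 2$.

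The hard part I expect is precisely this uniformity in $m$ of the bound for $\|\sup_j |f_{2,j,m}|\|_{p_2}$: because the translations $m_{j\ell} = 2^{m - L_\ell j - M_\ell}$ can be astronomically large and vary exponentially in $j$, one cannot simply dominate everything by a single fixed maximal function. The resolution sketched above succeeds because the estimates ultimately depend only on the disjointness of the frequency annuli in the lacunary decomposition of $f_2$, a property Rubio de Francia's theorem is designed to exploit with constants independent of the phase and hence of $m$, throughout the range $p_1, p_2, r' > 2$.
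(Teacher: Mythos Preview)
Your strategy of separating the scales in $\bS^{(1)}$ is the right instinct, but there is a genuine gap in the bound for the $f_1$ square function. Recall that $f_{1,j,m}(x)=f_{1,j,0}(x+m_{j1})$ with $m_{j1}=2^{m-L_1 j-M_1}$, so the multiplier of the $j$-th piece is $e^{2\pi i m_{j1}\xi}\,\widehat{\Phi_{1,j,0}}(\xi)$, which carries $\sim 2^{m}$ full oscillations across $\w'_{1,j}$. Rubio de Francia's inequality is a statement about sharp frequency cutoffs, not about arbitrary bounded multipliers supported on disjoint intervals, and no standard square-function theorem controls such oscillatory multipliers uniformly in $m$ for $p_1>2$. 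Indeed the estimate $\big\|\big(\sum_j|g_j(\cdot+a_j)|^2\big)^{1/2}\big\|_{p}\le C\big\|\big(\sum_j|g_j|^2\big)^{1/2}\big\|_{p}$ uniformly in the translations $a_j$ is false for $p>2$: take lacunary pieces $g_j$ with disjoint spatial supports and common $L^p$-norm, so the untranslated square function has $L^p$-norm $\sim N^{1/p}$, then translate so the supports coincide to force $\sim N^{1/2}$. (Your bound for $\sup_j|f_{2,j,m}|$ happens to survive, but for the reason below rather than the maximal Littlewood--Paley telescoping you sketch; also note that in this paraproduct $f_{2,j,m}$ is a band-pass, not a low-pass.)

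The paper avoids the issue entirely by applying H\"older in $j$ and in $x$ with the \emph{same} three exponents $p_1,p_2,r'$, obtaining
\[
\big|\La_{\bS^{(1)},m}\big|\ \le\ \prod_{\ell}\Big\|\Big(\sum_j|f_{\ell,j,0}\circ\Tr_{\ell,j,m}|^{p_\ell}\Big)^{1/p_\ell}\Big\|_{p_\ell}
=\ \prod_{\ell}\Big(\sum_j\|f_{\ell,j,0}\|_{p_\ell}^{p_\ell}\Big)^{1/p_\ell},
\]
where the equality is just Fubini plus a change of variable in each fixed $j$; matching the inner and outer exponents is precisely what makes the $j$-dependent translations vanish. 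Since $p_1,p_2,r'\ge 2$, one then uses the elementary pointwise inequality $(\sum_j|a_j|^{q})^{1/q}\le(\sum_j|a_j|^{2})^{1/2}$ and ordinary Littlewood--Paley (all three $\w'_{\ell,j}$ are annuli in the $\bS^{(1)}$ regime) to conclude. Replacing your Cauchy--Schwarz $+$ H\"older step by this matched-exponent H\"older fixes the proof with no further work.
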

\begin{proof}
We do not need  time frequency analysis for this proposition.
The key point is that when $s\in\bS^{(1)}$ the support of Fourier
transform of $f_{3, j, 0}$ is away from the origin so that we can apply
Littlewood-Paley Theorem  for the square function generated by  $f_{3, j, 0}$'s.
Clearly $|\La_{\bS^{(1)}, m}|$ is estimated by
$$
\int_{\mathbb R} \sum_j\prod_{\ell=1}^3 f_{\ell, j, 0}(\Tr_{\ell, j, m}(x)) dx\,.
$$
By
H\"older inequality, we dominate $|\La_{\bS^{(1)}, m}|$ by
$$
\bigg\| \bigg( \sum_j \big|f_{1,j,0}\circ\Tr_{1,j,m}\big|^{p_1}\bigg)^{1/p_1}
\bigg\|_{p_1}
\bigg\|\bigg( \sum_j \big|f_{2,j,0}\circ\Tr_{2,j,m}\big|^{p_2}\bigg)^{1/p_2}
\bigg\|_{p_2}
\bigg\|\bigg( \sum_j \big|f_{3,j,0}\big|^{r'}\bigg)^{1/r'}\bigg\|_{r'}\,.
$$
By a change of variables, it is clear that for $\ell=1, 2$,
$$
\bigg\| \bigg( \sum_j \big|f_{\ell,j,0}\circ\Tr_{\ell,j,m}\big|^{p_\ell}\bigg)^{1/p_\ell}
\bigg\|_{p_\ell}
= \bigg\| \bigg( \sum_j \big|f_{\ell,j,0}\big|^{p_\ell}\bigg)^{1/p_\ell}
\bigg\|_{p_\ell}\,.
$$
Notice the elementary inequality
$$
\bigg(\sum_{j}|a_j|^q\bigg)^{1/q} \leq \bigg(\sum_{j}|a_j|^2\bigg)^{1/2}
$$
holds for $q \geq 2$.
We thus dominate $|\La_{\bS^{(1)}, m}|$ by
$$
\bigg\| \bigg( \sum_j \big|f_{1,j,0}\big|^{2}\bigg)^{1/2}
\bigg\|_{p_1}
\bigg\|\bigg( \sum_j \big|f_{2,j,0}\big|^{2}\bigg)^{1/2}
\bigg\|_{p_2}
\bigg\|\bigg( \sum_j \big|f_{3,j,0}\big|^{2}\bigg)^{1/2}\bigg\|_{r'}\,.
$$
Now Littlewood-Paley theorem yields the desired estimate
(\ref{goodp2est}). This proves the proposition.
\end{proof}

We now use  time frequency analysis to prove Lemma \ref{uniestp2}.
Although we only need to estimate $\La_{\bS^{(2)}, m}$ due to Proposition
\ref{uniestp2good}, we still write a proof for both of
$\La_{\bS^{(1)}, m}$ and $\La_{\bS^{(2)}, m}$.

We first prove the size estimate for a single tree, that is,
\begin{equation}\label{atreesize}
\big|\La_{\bT, m}(f_1, f_2, f_3)\big|\leq C \prod_{\ell=1}^3
\size^*_\ell(\bT)|I_\bT|\,.
\end{equation}

We only prove the case when $\bT$ is a tree in $\bS^{(2)}$ for (\ref{atreesize})
since the other case is similar.
In this case $2^{k_{j\ell}}\sim 2^{k_j}$ for all $\ell$ in $\{1,2,3\}$.
We thus dominate $|\La_{\bT,m}|$ by
$$
\int_{\mathbb R} \sup_{(j,n)\in\bT}\big|
(\Id^{**}_{j,n}f_{3,j,0})\circ\Tr_{\ell,j,m}(x)\big|
 \prod_{\ell\neq 3} \bigg( \sum_{(j,n)\in\bT}\big|
(\Id^{**}_{j,n}f_{\ell,j,0})\circ\Tr_{\ell,j,m}(x)\big|^2 \bigg)^{1/2} dx\,.
$$
By the definition of $\Delta_\ell$ and H\"older inequality, we estimate
$|\La_{\bT,m}|$ by
$$
\sup_{(j,n)\in\bT}\big\|F^*_{3,j, 0}\big\|_\infty
 \big\|\Delta^*_{1}(\bT)\big\|_2 \big\|\Delta^*_{2}(\bT)\big\|_2\,,
$$
where $F^*_{3,j,0}=\Id^{**}_{j,n}f_{3,j,0}$.
 Notice that Lemma \ref{goodbmo1} holds for the semi-norm. Thus  we have
$$
\big\| F^*_{3,j,0}\big\|_\infty\leq \size^*_3({\bT})\,.
$$
Clearly the definition of size yields
$$\|\Delta_\ell(\bT)\|_2\leq \size^*_\ell({\bT})|I_\bT|^{1/2} \,$$
for $\ell\in\{1,2\}$.
Putting all of them together, we obtain (\ref{atreesize}).\\

\begin{lemma}\label{treeout}
Let $\kappa\in\{1,2\}$, $\bT$ be a tree in $\bS^{(\kappa)}$
and ${\bf P}$ be a subset of $\bS^{(\kappa)}$.
Suppose that $\bP\cap\bT=\emptyset$ and  $\bT$ is a maximal tree in $\bP\cup
\bT$. Then we have
\begin{equation}\label{treeout1}
\big|\La_{\bP\cup\bT, m}(f_1, f_2, f_3)-\La_{\bP, m}(f_1, f_2, f_3)\big|
 \leq Cm\prod_{\ell=1}^3\size^*_\ell(\bT\cup \bP)|I_\bT|\,,
\end{equation}
where $C$ is independent of $f_1, f_2, f_3, \bP$, $\bT$.
\end{lemma}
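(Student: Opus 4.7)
The plan is to decompose the difference $\La_{\bP\cup\bT,m}-\La_{\bP,m}$ via the telescoping identity
\begin{equation*}
\prod_{\ell=1}^{3}g_{\ell,j,m}-\prod_{\ell=1}^{3}h_{\ell,j,m}=\sum_{\ell=1}^{3}\Big(\prod_{\ell'<\ell}g_{\ell',j,m}\Big)A_{\ell,j,m}\Big(\prod_{\ell'>\ell}h_{\ell',j,m}\Big),
\end{equation*}
where $g_{\ell,j,m}(x)=\sum_{n\in(\bP\cup\bT)_{j}}F_{\ell,j,n,m}(x)$, $h_{\ell,j,m}(x)=\sum_{n\in\bP_{j}}F_{\ell,j,n,m}(x)$, and $A_{\ell,j,m}(x)=g_{\ell,j,m}(x)-h_{\ell,j,m}(x)=\sum_{n\in\bT_{j}}F_{\ell,j,n,m}(x)$. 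This writes $\La_{\bP\cup\bT,m}-\La_{\bP,m}$ as a sum of three cross terms, each containing exactly one tree-only factor $A_{\ell,j,m}$, and it then suffices to bound each cross term by $Cm\prod_{\ell}\size^{*}_{\ell}(\bT\cup\bP)|I_{\bT}|$.

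For a typical cross term, say $\int\sum_{j}A_{1,j,m}\,g_{2,j,m}\,h_{3,j,m}\,dx$, I will apply Cauchy--Schwarz to pair $A_{1,j,m}$ with $g_{2,j,m}$ into $\ell^{2}(j)$ square functions, and extract $h_{3,j,m}$ in $L^{\infty}$. The sup bound $\sup_{j,n}\|F_{3,j,n,m}\|_{\infty}\leq C\,\size^{*}_{3}(\bT\cup\bP)$ follows from Lemma \ref{goodbmo1} applied to the $\|\cdot\|_{j,n}$ semi-norm, which is applicable since $2^{k_{j\ell}}\sim 2^{k_{j}}$ for the relevant indices in $\bS^{(\kappa)}$. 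For the tree factor, the almost-disjointness of the dyadic intervals $\{I_{s}\}_{n\in\bT_{j}}$ at each scale together with the $L^{2}$-invariance of the translation $\Tr_{\ell,j,m}$ gives
\begin{equation*}
\Big\|\Big(\sum_{j}|A_{\ell,j,m}|^{2}\Big)^{1/2}\Big\|_{2}\lesssim \|\Delta^{*}_{\ell}(\bT)\|_{2}\leq \size^{*}_{\ell}(\bT\cup\bP)\,|I_{\bT}|^{1/2}.
\end{equation*}

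The main obstacle, and the source of the factor $m$, is bounding the mixed square function $\|(\sum_{j}|g_{2,j,m}|^{2})^{1/2}\|_{L^{2}(E)}$, where $E$ is the effective support of $\sum_{j}A_{1,j,m}$. At scale $j\in\scl(\bT)$, the shift $m_{j1}=2^{m-k_{j1}}$ can be as large as $2^{m}|I_{\bT}|$ at the top scale, so the support of $A_{1,j,m}$ is essentially the translate $I_{\bT}-m_{j1}$ which may lie at distance up to $2^{m}|I_{\bT}|$ from $I_{\bT}$. The maximality of $\bT$ in $\bP\cup\bT$, combined with $\bP\cap\bT=\emptyset$, forces $I_{s}\cap I_{\bT}=\emptyset$ for every $s=(j,n)\in\bP$ with $j\geq j_{\bT}$, so $\Sh_{j}(\bP)\cap I_{\bT}=\emptyset$; hence the overlap between the shifted support of $A_{1,j,m}$ and $\Sh_{j}(\bP)$ is caused entirely by the translation. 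I will dyadically partition $\scl(\bT)$ according to the ratio $m_{j1}/|I_{\bT}|$, which yields $O(m)$ regimes in which this ratio is essentially constant. Within each regime, a variant of the tree argument in Lemma \ref{sizelem}, together with the almost-orthogonality $|\sum_{n\in(\bP\cup\bT)_{j}}F_{\ell,j,n,m}|^{2}\lesssim\sum_{n\in(\bP\cup\bT)_{j}}|F_{\ell,j,n,m}|^{2}$ (up to Schwartz tails), produces a contribution bounded by $C\prod_{\ell}\size^{*}_{\ell}(\bT\cup\bP)|I_{\bT}|$. Summing over the $O(m)$ regimes yields the factor $m$ and completes the proof of (\ref{treeout1}); the remaining two cross terms are handled analogously by exchanging the roles of the factors and invoking the corresponding size-$\ell$ semi-norm bounds, and both cases $\kappa\in\{1,2\}$ follow the same scheme.
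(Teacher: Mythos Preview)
Your telescoping identity and the identification of the cross terms are correct, and so is the observation that maximality of $\bT$ in $\bP\cup\bT$ forces every $s\in\bP$ with $j\geq j_{\bT}$ to satisfy $I_{s}\cap I_{\bT}=\emptyset$. But the heart of the argument---the bound on a cross term such as $\int\sum_{j}A_{1,j,m}\,g_{2,j,m}\,h_{3,j,m}\,dx$---has a genuine gap. Putting $h_{3,j,m}=\sum_{n\in\bP_{j}}F_{3,j,n,m}$ in $L^{\infty}$ discards precisely the information that $\bP$ lives outside $I_{\bT}$, and the remaining factor $\|(\sum_{j}|g_{2,j,m}|^{2})^{1/2}\|_{L^{2}(E)}$ is \emph{not} controlled by $\size^{*}_{2}(\bT\cup\bP)\,|I_{\bT}|^{1/2}$: the sum $g_{2,j,m}$ ranges over all of $(\bP\cup\bT)_{j}$, and $\bP$ can be an arbitrarily large set; there is no mechanism in your scheme that reduces this to a single tree of top $I_{\bT}$. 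Your appeal to ``a variant of the tree argument in Lemma~\ref{sizelem}'' does not fill this in, since that lemma is a bound for a single convex tree and says nothing about the interaction with an external $\bP$.

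The paper's proof avoids this by never separating the two localisations. It works scale by scale and interval by interval: for each $j\in\scl(\bT)$ and each dyadic $I$ with $|I|=2^{-k_{j}}$, the $\bT$-factor contributes decay $(1+2^{k_{j}}\mathrm{dist}(I(m_{j1}),I_{\bT}))^{-N}$ while the $\bP$-factor contributes decay $(1+2^{k_{j}}\mathrm{dist}(I(m_{j2}),(I_{\bT})^{c}))^{-N}$, and it is the \emph{product} of these two weights that is summed. That product is small unless both shifted intervals $I(m_{j1})$ and $I(m_{j2})$ sit within distance $O(2^{m}|I|)$ of $\partial I_{\bT}$; for the $O(m)$ scales $j_{\bT}\leq j\leq j_{\bT}+10m$ one simply accepts a contribution of $|I_{\bT}|$ per scale, while for $j>j_{\bT}+10m$ the bad intervals have total length $O(2^{m}2^{-k_{j}})$ and the geometric sum is $O(2^{-8m}|I_{\bT}|)$. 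Your Cauchy--Schwarz step decouples the two decay factors and therefore cannot reproduce this counting; to repair the argument you must keep both localisations together at each scale, which is exactly the paper's route.
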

\begin{proof}
Clearly the difference $|\La_{\bP\cup\bT,m}-\La_{\bP,m}|$ is dominated by
a sum of $C\La_{\bT, m}$ and
at most finite many following trilinear forms
$$
\bigg|\int
\sum_{j\in\scl(\bT)}
  \bigg(\sum_{n\in \bT_j}F_{\ell_1,j, n, m}(x)\bigg)
 \bigg( \sum_{n\in \bP_j}F_{\ell_2,j, n, m}(x)\bigg)
 \bigg(\sum_{n\in (\bP\cup\bT)_j} F_{\ell_3,j, n, m}(x)\bigg)dx\bigg|\,,
$$
where $(\ell_1, \ell_2, \ell_3)$ is a permutation of $(1,2,3)$.
By (\ref{atreesize}), it sufficient to show that this trilinear
form can be estimated by the right hand side of (\ref{treeout}).
We only handle  the most difficult case $\ell_1=1, \ell_2=2$.
Other cases are similar. We estimate the trilinear form by
\begin{equation}\label{localI}
\sum_{j\in\scl(\bT)}\sum_{I:|I|=2^{-k_j}}
\bigg\|\bigg(\sum_{n\in\bT_j}F_{1,j,n,m}\bigg)
  \bigg(\sum_{n\in\bP_j}F_{2,j,n,m}\bigg)
   \bigg(\sum_{n\in (\bP\cup\bT)_j}F_{3,j,n,m} \bigg)
  \bigg\|_{L^{1}(I)} \,.
\end{equation}
There is at least one of indices $\ell\in\{1,2\}$ satisfying
$k_{j\ell}=k_j$. Without loss of generality, assume $k_{j1}=k_j$.
We have that for any positive integer $N$,
$$
\bigg\|\sum_{n\in\bT_j}F_{1,j,n,m}\bigg\|_{L^{\infty}(I)}
\leq \frac{C_N}{\big(1+ 2^{k_j}{\rm dist}(I(m_{j1}), I_\bT)\big)^N}
 \big\|\Id^{**}_{j,n'}f_{\ell,j,0}\big\|_{\infty}\,,
$$
where $I(m_{j1})=I+m_{j1}$ is an interval generated by shifting $I$
to the right by $m_{j1}$ and
 $n'\in(\bP\cup\bT)_{j}$ which minimizes the distance between $I_{j,n}$ and $I(m_{j1})$.
Since Lemma \ref{goodbmo1} holds for the semi-norm, we get
$$
\bigg\|\sum_{n\in\bT_j}F_{1,j,n,m}\bigg\|_{L^{\infty}(I)}
\leq \frac{C_N\size^*_{1}(\bP\cup\bT)}{\big(1+ 2^{k_j}{\rm dist}(I(m_{j1}), I_\bT)\big)^N}\,.
$$
And since $\bP\cap\bT=\emptyset$ and $\bT$ is a maximal tree in $\bP\cup\bT$,
we have
$$
\bigg\|\sum_{n\in\bP_j}F_{2,j,n,m}
  \bigg\|_{L^{2}(I)} \leq
\frac{C_N}{\big(1+ 2^{k_j}{\rm dist}(I(m_{j2}), (I_\bT)^c)\big)^N}
 \big\|\Id^{**}_{j,n'}f_{\ell,j,0}\big\|_{2}\,,
$$
which is obviously bounded by
$$
 \frac{C_N\size^*_2(\bP\cup\bT)|I|^{1/2}}{\big(1+ 2^{k_j}{\rm dist}(I(m_{j2}), (I_\bT)^c)\big)^N}
\,.
$$
Similarly, we also have
$$
\bigg\|\sum_{n\in(\bP\cup\bT)_j}F_{3,j,n,m}
  \bigg\|_{L^{2}(I)} \leq  C\size^*_3(\bP\cup\bT)|I|^{1/2}\,.
$$
Thus we estimate (\ref{localI}) by
$$
 \sum_{j\in\scl(\bT)}\sum_{I:|I|=2^{-k_j}}
 \frac{C_N\size^*_{1}(\bP\cup\bT)\size^*_2(\bP\cup\bT)
  \size^*_3(\bP\cup\bT)|I|}{\big(1+ 2^{k_j}{\rm dist}(I(m_{j1}), I_\bT)\big)^N
 \big(1+ 2^{k_j}{\rm dist}(I(m_{j2}), (I_\bT)^c)\big)^N}\,.
$$
Let $j_\bT$ be the index for the top of $\bT$. If $j_\bT+10m \geq j\geq
j_\bT$, we only have at most $10m$ different values for $j$. Notice that
if $I(m_{j1})\subset (I_\bT)^c$, then we can replace ${\rm dist}(I(m_{j1}),
I_{\bT})$ by ${\rm dist}(I(m_{j1}), \partial I_{\bT})$. Thus if we only sum
$j$ from $j_\bT$ to $j_\bT+10m$ we get that (\ref{localI}) is dominated by
$$
 Cm\prod_{\ell=1}^3\size^*_\ell(\bP\cup\bT)|I_\bT| \,.
$$
The remaining thing we need to deal with is to sum all $j>j_\bT+10m$.
The main difficulty is the case $I(m_{j1})\nsubseteq (I_{\bT})^c$
and $I(m_{j2})\nsubseteq I_\bT$, because in other cases we gain
$\big(1+2^{k_j}{\rm dist}(I(m_{j\ell}), \partial I_\bT)\big)^{-100}$
in the estimate for at least one of $\ell\in\{1,2\}$, which trivializes the estimate. We also know from the definition of $m_{j\ell}$ that
$
{\rm dist}(I(m_{j1}), I(m_{j2})) \leq 2^m|I|\,.
$
To make the difficult case happen, the interval $I$ must satisfy
${\rm dist}(I(m_{j\ell}), \partial I_\bT)
\leq 10\cdot 2^m |I|$ for both $\ell=1, 2$.  Sum $|I(m_{j\ell})|$
for all such $I$'s to get a upper bound $C 2^m 2^{-k_j} $.
Then summing these upper bounds for all $j>j_\bT+10m$ we get
a bound $C 2^{-8m}|I_\bT|$. Therefore we estimate (\ref{localI})
by $Cm\prod_{\ell=1}^3\size^*_\ell(\bP\cup\bT)|I_\bT|$. This proves the lemma.
\end{proof}

Lemma \ref{prilem} still holds for the  sizes of trees defined in 
Subsection \ref{subpl}.  Let $\kappa\in\{1,2\}$.
Applying this organization lemma inductively for $\bS^{(\kappa)}$,
we decompose
\begin{equation}\label{gooddep2}
\bS^{(\kappa)}=\bigcup_{\sigma} \bS^{(\kappa)}_\si\,,
\end{equation}
where  $\sigma $ ranges over all possible dyadic numbers,
$\bS^{(\kappa)}_\si=\cup_{\bT\in\mathcal F_\si^{(\kappa)}}\bT$
such that $\mathcal F_\si^{\kappa} $ is a collection of  maximal trees
with
\begin{equation}\label{ctestp2}
 \ct(\bS_\si^{(\kappa)})\leq C\si^{-2}\,,
\end{equation}
and
\begin{equation}\label{sizekaestp2}
\size^*_\ell(\bS_\si^{(\kappa)})\leq \si|F_\ell|^{1/2}
\end{equation}
holds for all $\ell\in\{1,2,3\}$.

Notice that Lemma \ref{Lpest} holds for the new sizes of trees defined in Subsection 
\ref{subpl}.
 We thus can also sharpen the upper bound in
the size estimate for $\bS_\si^{(\kappa)}$ by
\begin{equation}\label{sizekaest123p2}
\size^*_\ell(\bS_\si^{(\kappa)})\leq \min\{1, \si|F_\ell|^{1/2}\}\,.
\end{equation}
Hence by Lemma \ref{treeout} we estimate $\La_{\bS, m}$ by
$$
\sum_{\kappa=1}^2
 \sum_{\si}\sum_{\bT\in\mathcal F_\si^{(\kappa)}}
 m\prod_{\ell=1}^3\size^*_{\ell}(\bS_\si^{(\kappa)})|I_\bT|
\,.
$$

Applying (\ref{sizekaest123p2}) and (\ref{ctestp2}), we thus obtain
\begin{equation}\label{finalestpp2}
\big|\La_{\bS,m}(f_1, f_2, f_3)\big|\leq
 Cm\sum_{\sigma }\si^{-2} \min\{1, \si|F_1|^{1/2}\}
   \min\{1, \si|F_2|^{1/2}\}\min\{1, \si\}\,,
\end{equation}
which clearly implies (\ref{largepest2}).
Therefore we complete the proof of Lemma {\ref{uniestp2}}.\\

\subsection{A truncated trilinear form}

First by a change of variable, we write $\La_{L_1, L_2, M_1, M_2, m}$
as
\begin{equation}\label{chanlam}
 \La_{L_1, L_2, M_1 M_2, m}(f_1, f_2, f_3) =
 \int \sum_{j}\prod_{\ell=1}^{3}f_{\ell,j,0}\big(\ti\Tr_{\ell,j,m}(x)\big)dx\,,
\end{equation}
where $\ti\Tr_{1,j,m}(x)=\Tr_{1,j,m}-m_{j2}$, $\ti\Tr_{2,j,m}(x)=x$,
$\ti\Tr_{3,j,m}(x)=x-m_{j2}$. \\

To prove Lemma \ref{uniestp110}, we have to set up our
time-frequency decomposition in a slightly different way for 
technical reasons. Recall that $\psi$ is a nonnegative Schwartz
function such that $\wh\psi$ is supported in $[-1/100, 1/100]$ and
$\wh\psi(0)=1$. And $\psi_k(x)=2^k\psi(2^kx)$. Let $\Omega$ be the
set defined as in (\ref{defOmega}). As before, $k_{j\ell}$ is an
integer such that $2^{k_{j\ell}}\sim |\w'_{\ell,j}|$ for for
$\ell\in\{1,2,3\}$ and $k_j=\min\{k_{j\ell}\}$.  For a very small
positive number $\e$, we define
\begin{equation}\label{defomegajl}
 \Omega_{j} = \{x\in\Omega: {\rm dist}(
x, \Omega^c)\geq  2^{\e^2m}2^{-k_j}\}\,.
\end{equation}
\begin{equation}\label{defpsijl}
\psi_{j1}=\psi_{j2}=\psi_{j3}= \Id_{(\Omega_{j})^c}*\psi_{k_{j}}(x)\,.
\end{equation}
$\Omega_j$, $\psi_{j\ell}$ depend on $m, \e$ but this dependence
is suppressed for notational convenience.
A truncated trilinear form is defined by
\begin{equation}\label{deftruntri}
\La_{\Omega, m}(f_1, f_2, f_3)= \int\sum_{j\in\mathbb Z}
\prod_{\ell=1}^3\psi_{j\ell}(x)
 f_{\ell, j, 0}\big(\ti\Tr_{\ell, j, m}(x)\big)dx\,.
\end{equation}
Heuristically, $\psi_{j\ell}$ can be considered as $\Id_{(\Omega_{j})^c}$
since it is a smooth approximation of $\Id_{(\Omega_j)^c}$. In time
space, $\Omega_j$ is an exceptional set which can be removed. 
we can handle it  well.  The technical details about this can be found in
Section \ref{para2}. In order to get $2^{\e m}$ instead of $2^m$ in the estimates, 
we have to remove only a smaller set. Here is the lemma which allows us to do so.

\begin{lemma}\label{trunlem}
Let $F_1$, $F_2$, $F_3$ be measurable sets. Let $F'_3=F_3\backslash\Omega$.
Then
\begin{equation}\label{trundiff}
 \big| \big(\La_{L_1, L_2, M_1, M_2, m}-\La_{\Omega, m}\big)(f_1, f_2, f_3)\big| \leq C2^{-100m}\min\big\{1, |F_1|^{1/p}\big\}\min\big\{1,
|F_2|^{1/p}\big\}\,
\end{equation}
holds for all functions $f_1\in X(F_1), f_2\in X(F_2), f_3\in X(F'_3)$, 
where $C$ is a constant independent of $L_1, L_2$, $M_1, M_2, m$, $f_1, f_2, f_3$, $F_1, F_2, F_3$.
\end{lemma}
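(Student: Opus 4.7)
The plan is to show that on the essential support of $1-\psi_j(x)^3$ the integrand already carries a factor $2^{-Nm}$ for arbitrary $N$, and then to sum a geometric series in $j$. The argument parallels the proof of Lemma \ref{lemomega}, but replaces the exceptional set $\Omega$ by the smaller $\Omega_j$, and uses hypothesis (\ref{2large1}) decisively to produce the required decay.

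Since $\psi_{j1}=\psi_{j2}=\psi_{j3}$, I denote the common value by $\psi_j$. The difference $(\La_{L_1,L_2,M_1,M_2,m}-\La_{\Omega,m})(f_1,f_2,f_3)$ equals $\int \sum_j [1-\psi_j(x)^3]\prod_{\ell=1}^3 f_{\ell,j,0}(\ti\Tr_{\ell,j,m}(x))\,dx$. Because $0\le \psi_j\le 1$, I bound $|1-\psi_j^3|\le 3(1-\psi_j)=3(\Id_{\Omega_j}\ast \psi_{k_j})(x)$, which is morally $\Id_{\Omega_j}$ with Schwartz tails at scale $2^{-k_j}$. It thus suffices to control
$$\sum_j \int (\Id_{\Omega_j}\ast \psi_{k_j})(x)\prod_{\ell=1}^3 \big|f_{\ell,j,0}(\ti\Tr_{\ell,j,m}(x))\big|\,dx.$$

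The key pointwise estimate: on the effective support of $\Id_{\Omega_j}\ast\psi_{k_j}$, the definition of $\Omega_j$ gives ${\rm dist}(x,\Omega^c)\ge c\cdot 2^{\e^2 m-k_j}$. Assumption (\ref{2large1}) implies $k_{j2}\ge k_{j1}+m=k_j+m$, so $m_{j2}=2^{m-k_{j2}}\le 2^{-k_j}$, negligible compared with $2^{\e^2 m-k_j}$. Hence $\ti\Tr_{3,j,m}(x)=x-m_{j2}$ still satisfies ${\rm dist}(x-m_{j2},\Omega^c)\ge c\cdot 2^{\e^2 m-k_j}$. Since $f_3\in X(F_3')$ with $F_3'\subseteq \Omega^c$, convolving against the Schwartz kernel $\Phi_{3,j,0}$ of scale $2^{-k_{j3}}$ and using $k_{j3}\ge k_{j2}\ge k_j+m$ yields, for any $N$,
$$|f_{3,j,0}(x-m_{j2})|\le C_N\big(1+2^{k_{j3}}\cdot 2^{\e^2 m-k_j}\big)^{-N+1}\le C_N 2^{-(N-1)(1+\e^2)m}.$$
For $f_1,f_2$ I invoke the pointwise bound (\ref{estfl2}), namely $|f_{\ell,j,0}(y)|\le C_N\min\{1,|F_\ell|^{1/p}\}(1+2^{k_{j\ell}}{\rm dist}(y,\Omega^c))^2$ for $\ell=1,2$. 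Since $k_{j3}\ge k_{j1},k_{j2}$, this polynomial growth is absorbed by the $f_3$-decay at the cost of reducing the exponent $N$ by a few.

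Finally I sum over $j$: for fixed $x\in\Omega$ with $d={\rm dist}(x,\Omega^c)$, only indices with $2^{k_j}d\ge 2^{\e^2 m}$ contribute, and the summand $(1+2^{k_{j3}}d)^{-N+5}$ is a geometric series in $j$ (because $k_{j3}$ grows linearly in $j$), dominated by its leading term $\le 2^{-(N-5)(1+\e^2)m}$, independently of $x$. Integrating over $x\in\Omega$ (of measure $\le 1$) and choosing $N$ with $(N-5)(1+\e^2)\ge 100$ delivers the bound $C 2^{-100m}\prod_{\ell=1,2}\min\{1,|F_\ell|^{1/p}\}$. The main technical obstacle is the careful bookkeeping of the Schwartz tails of $\Id_{\Omega_j}\ast\psi_{k_j}$ outside $\Omega_j$ proper, where the weight itself supplies decay like $(1+2^{k_j}{\rm dist}(x,\Omega_j))^{-N}$ that must be combined with the above estimates; this region is in fact easier, since the weight is already super-small. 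The hypothesis (\ref{2large1}) is essential throughout: without it $m_{j2}$ could be comparable to $2^{\e^2 m-k_j}$, and the decisive decay of $f_{3,j,0}$ would break down.
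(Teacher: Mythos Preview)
Your proposal is correct and follows essentially the same approach as the paper's proof: bound $|1-\prod_\ell\psi_{j\ell}|$ by $\Id_{\Omega_j}\ast\psi_{k_j}$, use (\ref{2large1}) to ensure the shift $m_{j2}$ is negligible relative to $2^{\e^2 m-k_j}$ so that $f_{3,j,0}$ retains its Schwartz decay from $\Omega^c$, absorb the polynomial growth of $f_{1,j,0},f_{2,j,0}$, and finish with a geometric sum in $j$ followed by integration over $\Omega$. Your use of $k_{j3}\ge k_j+m$ to extract the factor $2^{(1+\e^2)m}$ per power of $N$ is slightly sharper than the paper's written display (which only exhibits $2^{\e^2 m}$ per power), but both are correct since $N$ may depend on $\e$.
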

\begin{proof}
The difference $|\La_{L_1, L_2, M_1, M_2, m}-\La_{\Omega,m}|$ is
dominated by
$$
\int\sum_j\big|1-\prod_{\ell=1}^3\psi_{j\ell}(x)|\bigg|\prod_{\ell=1}^3f_{\ell,j,0} \big(\ti\Tr_{\ell,j,m}(x)\big)\bigg| dx\,.
$$
Clearly,
$$
\big|1-\prod_{\ell=1}^3\psi_{j\ell}(x)\big|
\leq 3\sum_{\ell=1}^3\big|1- \psi_{j\ell}(x)\big|
$$
For $\ell=\{1,2\}$, by the definition of $\Omega$, we have for any positive integer $N$,
\begin{eqnarray*}
  &   &  \big|f_{\ell, j,0}\big(\ti\Tr_{\ell,j,m}(x)\big) \big| \\
 & \leq & \int \frac{C_N|f_\ell(y)| 2^{k_{j\ell}}}{ \big(
1+2^{k_{j\ell}}|\ti \Tr_{\ell,j,m}(x)-y| \big)^N} dy\\
 & \leq & C 2^{2m}(1+ 2^{k_{j\ell}}{\rm dist}(\ti\Tr_{\ell,j,m}(x), \Omega^c))^2
   \min\{1, |F_\ell|^{1/p}\}\,.
\end{eqnarray*}
Since $f_3\in X(F'_3)$, we obtain that
\begin{equation}\label{estf3trun}
 \big|f_{3,j,0}(x)\big|\leq \frac{C_N}{\big(1+ 2^{k_{j3}} {\rm dist}(
\ti\Tr_{3,j,m}(x),   \Omega^c)\big)^N}\,.
\end{equation}
Thus by the fact that $2^{k_{j3}}\sim \max \{2^{k_{j\ell}}\}$,
 $k_{j2}> k_{j1}+m$ and the definition of $\Omega_j$,
 the difference in the left hand side of (\ref{trundiff}) is estimated by
\begin{eqnarray*}
 & &
 \sum_j\int\int_{\Omega_{j}}\frac{2^{k_{j}}}{\big(
 1+2^{k_{j}}|x - y|\big)^N} dy
 \frac{C_N2^{4m}\min\{1, |F_1|^{1/p}\}\min\{1, |F_2|^{1/p}\} } {\big(1+ 2^{k_{j3}} {\rm dist}(\ti\Tr_{3,j,m}(x), \Omega^c)\big)^N} dx\\
 & \leq & \sum_j\int_{\Omega_{j}}
 \frac{C_N2^{4m}\min\{1, |F_1|^{1/p}\}\min\{1, |F_2|^{1/p}\}}
 {\big(1+2^{k_{j}}{\rm dist}(y, \Omega^c)\big)^N} dy
\\
 & \leq & C2^{-100m}\min\{1, |F_1|^{1/p}\}\min\{1, |F_2|^{1/p}\}\,.
\end{eqnarray*}
Therefore we finish the proof.
\end{proof}

By this lemma, we only need to consider $\La_{\Omega,m}$.
For $\bS\subset \mathbb Z(\gamma)\times \mathbb Z$ we define
\begin{equation}\label{deftrunlam}
\La_{\bS, \Omega, m}(f_1, f_2, f_3)=\int_{\mathbb R}\sum_{j\in\mathbb Z}
\prod_{\ell=1}^{3}\sum_{n\in\bS_j}\ti F_{\ell,j, n, m}(x) dx\,,
\end{equation}
where $\ti F_{\ell,j,n,m}$ is defined by
\begin{equation}\label{tiFljnm}
\ti F_{\ell,j,n,m}(x)= \psi_{j\ell}(x)\Id^{*}_{j,n}
\big(\ti\Tr_{\ell,j,m}(x)\big)f_{\ell,j,0}\big(\ti\Tr_{\ell,j,m}(x)\big) \,.
\end{equation}
As before we only need to consider the trilinear form (\ref{deftrunlam}).
To prove Lemma \ref{uniestp110}, it is sufficient to show the
following lemma due to Lemma \ref{trunlem}.

\begin{lemma}\label{uniestp11}
Let $\e$ be any positive number, $1<p<2$ and $F_1, F_2, F_3$ be
measurable sets in $\mathbb R$ such that $|F_3|=1$. There is a
subset $F_3'\subset F_3$ with $|F'_3|\geq |F_3|/2$ such that for all
$p_1, p_2\geq p$ with $1/p_1+1/p_2\geq 1$, and all functions $f_1\in
X(F_1)$, $f_2\in X(F_2)$,  $f_3\in X(F_3)$, the following inequality
holds.
\begin{equation}\label{psmallest11}
\big|\La_{\bS, \Omega, m}(f_1, f_2, f_3)\big|\leq C2^{\e
m}|F_1|^{1/p_1}|F_2|^{1/p_2}
 \, ,
\end{equation}
where $C$ is a constant independent of $\bS$, $F_1, F_2, F_3, f_1,
f_2, f_3, L_1, L_2, M_1, M_2, m$.
\end{lemma}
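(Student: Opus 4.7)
The plan is to run the time-frequency machinery of Section \ref{para2} (proof of Lemma \ref{ftriest}) on the truncated trilinear form $\La_{\bS, \Omega, m}$, with three adjustments that together explain both the appearance of $2^{\e m}$ and the role of hypothesis (\ref{2large1}). First, I would repeat the reduction from Lemma \ref{lemomega}: passing from $\bS$ to $\bS(\Omega)=\{s\in\bS:I_s\nsubseteq\Omega\}$ costs only $C\min\{1,|F_1|^{1/p}\}\min\{1,|F_2|^{1/p}\}$, since on $I_s\subseteq\Omega$ the truncation $\psi_{j\ell}$ is essentially zero except in the $2^{\e^2 m}2^{-k_j}$-neighborhood of $\partial\Omega$, where (\ref{estf3trun}) and the definition of $\Omega_j$ give arbitrarily fast decay in $m$. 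I would then split $\bS(\Omega)=\bS^{(1)}(\Omega)\cup\bS^{(2)}(\Omega)$ according as $|\w'_{2,j}|\le |\w'_{1,j}|/6$ or not, as in (\ref{defofSgood})–(\ref{defofSbad}), and verify convexity exactly as in Lemma \ref{convexitylem}.

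Second, I would adopt the $L^p$-variant of the sizes from Definition \ref{defofnorm111} (replacing $L^2$ by $L^p$ for $1<p<2$ in (\ref{defnormm}) and (\ref{size12p2})). Lemmas \ref{Lpest}, \ref{goodbmo1}, \ref{goodbmo2}, \ref{shadowest}, \ref{difftree}, and \ref{diffest23} all carry over verbatim for these $L^p$-sizes, and the organization lemma \ref{prilem} gives, for $\kappa\in\{1,2\}$, a decomposition $\bS^{(\kappa)}(\Omega)=\bigcup_\si \bS^{(\kappa)}_\si$ with $\ct(\bS^{(\kappa)}_\si)\le C\si^{-p}$ and $\size^*_\ell(\bS^{(\kappa)}_\si)\le\min\{1,\si|F_\ell|^{1/p}\}$ for $\ell\in\{1,2\}$ and $\size^*_3(\bS^{(\kappa)}_\si)\le\min\{1,\si\}$.

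Third, and most importantly, I would prove the single-tree estimate
\begin{equation*}
\bigl|\La_{\bT,\Omega,m}(f_1,f_2,f_3)\bigr|\le C\,2^{\e m}\prod_{\ell=1}^3 \size^*_\ell(\bT)\,|I_\bT|
\end{equation*}
for every convex tree $\bT\subset\bS^{(\kappa)}(\Omega)$. This mirrors Lemma \ref{sizelem} and Lemma \ref{treeout}: I split the $j$-sum into a telescoping decomposition (as in the proof of Lemma \ref{sizelem}, using the displacements $F_{\ell,j}-F_{\ell,j-L}$) so that the square functions controlled by Lemma \ref{diffest23} appear, and then handle the translations $\ti\Tr_{\ell,j,m}$ scale by scale. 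For scales $j_\bT\le j\le j_\bT+10\e m$ there are only $O(\e m)$ terms, each of which contributes at most $C\prod_\ell\size^*_\ell(\bT)|I_\bT|$ by the proof of Lemma \ref{treeout}; the total is therefore $O(\e m)\cdot\prod\size^*_\ell|I_\bT|\le C\,2^{\e m}\prod\size^*_\ell|I_\bT|$. For scales $j\ge j_\bT+10\e m$, hypothesis (\ref{2large1}) implies $k_{j2}\ge k_{j1}+m$, so the shift difference $|m_{j1}-m_{j2}|\lesssim 2^{m-k_j}$ is meaningful only for tiles whose time-interval lies within a $2^{m-k_j}$-neighborhood of $\partial I_\bT$; but the truncation $\psi_{j\ell}$ kills everything outside the $2^{\e^2 m}2^{-k_j}$-tube around $\Omega^c$, and for $j-j_\bT\ge 10\e m$ the combination produces rapid decay (in $m$, uniformly in the tree), summing to an $O(2^{-m}|I_\bT|)$ contribution. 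Combining these regimes yields the claimed $2^{\e m}$-factor.

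Finally, summing the tree estimate with the organization decomposition gives
\begin{equation*}
\bigl|\La_{\bS,\Omega,m}(f_1,f_2,f_3)\bigr|\le C\,2^{\e m}\sum_{\si\le 1}\si^{-p}\,\min\{1,\si|F_1|^{1/p}\}\min\{1,\si|F_2|^{1/p}\}\min\{1,\si\},
\end{equation*}
and the geometric series in $\si$ is bounded, exactly as in (\ref{finalest}), by $C|F_1|^{1/p_1}|F_2|^{1/p_2}$ whenever $p_1,p_2\ge p$ and $1/p_1+1/p_2\ge 1$, which is (\ref{psmallest11}). The main obstacle will be calibrating the two small parameters: the truncation width $2^{\e^2 m}$ in (\ref{defomegajl}) must be small enough that Lemma \ref{trunlem} gives a harmless error, yet large enough that the rapid-decay argument for scales $j\ge j_\bT+10\e m$ actually beats the $O(\e m)$ logarithmic factor coming from the short-scale regime; the exponent $\e^2\ll\e$ is precisely what balances these two requirements, and verifying that this balance propagates through every step of the telescoping and organization is where the bulk of the bookkeeping will go.
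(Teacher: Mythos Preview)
Your proposal has a genuine gap in the single-tree estimate for the large-scale regime $j\ge j_\bT+10\e m$. You argue that the truncation $\psi_{j\ell}$ ``kills everything outside the $2^{\e^2 m}2^{-k_j}$-tube around $\Omega^c$'' and that this, combined with the boundary-of-$I_\bT$ localization of the translation effect, yields rapid decay. But these two localizations are unrelated: $\psi_{j\ell}$ is tied to $\partial\Omega$, not to $\partial I_\bT$. Worse, once you have carried out your own first step and restricted to $\bS(\Omega)=\{s:I_s\nsubseteq\Omega\}$, every time interval $I_s$ meets $\Omega^c$, so $\psi_{j\ell}\approx 1$ on all the tiles that remain; the truncation provides no additional decay whatsoever inside a tree of $\bS(\Omega)$. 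Meanwhile the shift for $\ell=1$ is of order $2^m|I_s|$, and your scale threshold $j_\bT+10\e m$ only forces $|I_s|\lesssim 2^{-10\e m}|I_\bT|$, so the shifted intervals still sit $\approx 2^{(1-10\e)m}|I_\bT|$ away from $I_\bT$---far from small for small $\e$. No rapid decay follows.

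The paper's mechanism is quite different and worth comparing. It does \emph{not} reduce to $\bS(\Omega)$ and does \emph{not} try to get $2^{\e m}$ at the single-tree level. Instead it builds the truncation $\psi^*_{j\ell}$ directly into the definition of the sizes (Definition~\ref{defofnormp11}). After the change of variable $\ti\Tr_{2,j,m}(x)=x$, the second factor carries no translation, so Lemma~\ref{Lpestm} gives the sharpened a~priori bound $\size^*_{2,m}(\bS)\le C\,2^{\e^2 m}|F_2|^{1/p}$ (this is where the choice of truncation width $2^{\e^2 m}$ enters). The single-tree and tree-removal estimates (equation~(\ref{sizeestp11}) and Lemma~\ref{treeoutp11}) each carry only a polynomial factor $m$. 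The $2^{\e m}$ then emerges from the final dyadic summation (\ref{finalestpp11}): the range of $\sigma$ is capped at $2^{\e^2 m}$ on the $\ell=2$ side, and since $p<2$ the geometric sum $\sum_{\sigma\le 2^{\e^2 m}}\sigma^{2-p}$ produces $2^{\e^2 m(2-p)}$, which together with the polynomial $m$ is absorbed into $2^{\e m}$. In short, the $\e$-gain comes from the \emph{size bound} for the untranslated factor, not from a scale-by-scale analysis of a single tree.
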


\subsection{Preliminary Lemmata}

To prove Lemma \ref{uniestp11},
we should change the definitions of size of a tree in $\bS$
and set up some lemmata first.

\begin{definition}\label{defofnormp11}
Let $(j,n)\in\bS$ and $\ell\in\{1, 2,3\}$.
Let $\psi_{j\ell}^*$ be the function
\begin{equation}\label{defofpsistar}
\psi^*_{j\ell}(x)= \int_{(\Omega_j)^c}\frac{2^{k_j}}{\big(
 1 +2^{2k_j}|x-y|^2  \big)^{200}}dy
\end{equation}
Define a semi-norm
$\big\|f_\ell\big\|_{j,n,m}$  by
\begin{equation}\label{defnormmp11}
\big\|f_{\ell}\|_{j,n,m} =\frac{1}{|I_s|^{1/p}}
\big\|\Id^{**}_{j,n}\big(\psi^*_{j\ell}\circ \ti\Tr^{-1}_{\ell,j,m}\big)
f_{\ell, j,0}\big\|_p
   + \frac{1}{|I_s|^{1/2}}\big\|2^{-k_{j\ell}}\Id^{**}_{j,n}
 \big(\psi^*_{j\ell}\circ \ti\Tr^{-1}_{\ell,j,m}\big)Df_{\ell, j, 0}\big\|_p\,,
\end{equation}
where $\ti\Tr^{-1}_{\ell,j,m}$ is the inverse of $\ti\Tr_{\ell,j,m}$
and  $Df_{\ell,j, 0}$ is the derivative of $f_{\ell,j,0}$.
\end{definition}

\begin{definition}
For $\ell\in\{1,2\}$ and a tree $\bT$,
let $(j_\bT, n_\bT)$ be the top of the tree $\bT$.  And let
$\Delta^*_{\ell,m}(\bT)$ be defined by

\begin{equation}\label{defdeltap11m}
\Delta^*_{\ell,m}(\bT)(x)=\bigg(\sum_{(j,n)\in\bT}
\big|\Id^{*}_{j,n}(x)
\big(\psi^*_{j\ell}\circ \ti\Tr^{-1}_{\ell,j,m}\big)(x)
 f_{\ell, j,0}(x)\big|^2 \bigg)^{1/2}\,.
\end{equation}

If $\bT $ is a tree in $\bS$, we
 define
\begin{equation}\label{size12p11}
\size_{\ell,m}(\bT) = \frac{1}{|I_\bT|^{1/2}}\big\|\Delta^*_{\ell,m}(\bT)
\big\|_p + \big\|f_\ell\big\|_{j_\bT,n_\bT,m}\,,
\end{equation}
for all $\ell\in\{1,2\}$.


Let ${\bf P}$ be a subset of $\bS$. Define the $(\ell,m)$-$\size^*$
of $\bT$ by
\begin{equation}
\size^*_{\ell,m}({\bf P})=\sup_{\bT: \bT\subset {\bf P}}\size_{\ell,m}(\bT)\,,
\end{equation}
where $\bT$ ranges over all trees in ${\bf P}$.

In the definition of $\psi^*_{j\ell}$, we can replace the exponent $200$
by a larger number $2^{100}$ to define a new function. Denote this function
by $\ti\psi^*_{j\ell}$. If $\Id^{*}_{j,n}$ and $\psi^*_{j\ell}$ are
replaced by $\ti\Id^*_{j,n}$ and $\ti\psi^*_{j\ell}$ respectively
in the definition $\Delta_{\ell,m}^*(\bT)$, we denote the corresponding
function by $\Delta_{\ell,m}(\bT)$.
\end{definition}

\begin{lemma}\label{Lpestm}
Let  $1<q<\infty$, $\ell\in\{1,2,3\}$ and $\bT$ be a tree in $\bS$.
Then
\begin{equation}\label{Lpest1m}
\big\|\Delta^*_{\ell,m}(\bT)\big\|_{q}\leq C \inf_{x\in
I_\bT}M_q(Mf_\ell)(x)|I_\bT|^{1/q}\,,
\end{equation}
\begin{equation}\label{Lpest2m}
\size_{\ell,m}(\bT)\leq C \min\{2^{\beta_\ell m}|F_\ell|^{1/p},
 \inf_{x\in I_\bT}M_p(Mf_\ell)(x)\}\,,
\end{equation}
where $\beta_\ell=1$ if $\ell=1$, $\beta_\ell=\e^2$ if $\ell=2$, and 
$C$ is a constant independent of $f_\ell, \bT$, $\bS$, $L_1$, $L_2$,
$M_1, M_2$.
\end{lemma}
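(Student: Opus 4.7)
The lemma is the $m$-dependent analog of Lemma \ref{Lpest}, with the additional smooth cutoff $\psi^*_{j\ell}\circ\ti\Tr^{-1}_{\ell,j,m}$ and the translation $\ti\Tr_{\ell,j,m}$ built into the semi-norm and the square function. My strategy is to recycle the proof of Lemma \ref{Lpest} as much as possible, treating $\psi^*_{j\ell}$ as a pointwise $O(1)$ harmless factor and $\ti\Tr_{\ell,j,m}$ as an $L^q$-isometry, and then to add one new pointwise argument for the second upper bound in (\ref{Lpest2m}) that exploits the very definition of $\Omega_j$.

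To prove (\ref{Lpest1m}) I would first establish the global square-function inequality $\|\Delta^*_{\ell,m}(\bT)\|_q\leq C\|f_\ell\|_q$. Since $\psi^*_{j\ell}$ is bounded pointwise by a constant and the frequency bands $\w'_{\ell,j}$ are essentially disjoint, this reduces, after a change of variable removing $\ti\Tr_{\ell,j,m}$, to a direct application of the Littlewood--Paley theorem. Then, exactly as in Lemma \ref{Lpest}, I would split $f_\ell=f_\ell\Id_{2I_\bT}+f_\ell\Id_{(2I_\bT)^c}$; the local part inherits the just-obtained $L^q$-square-function estimate and is bounded by $C\inf_{I_\bT}M_q(Mf_\ell)|I_\bT|^{1/q}$, while the far part is controlled by the rapid decay of $\Id^{**}_{j,n}$ evaluated at the translated point $\ti\Tr_{\ell,j,m}(x)$ and summed over the tree using the nesting of the intervals $I_s$. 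The $\inf_{I_\bT}M_p(Mf_\ell)$ half of (\ref{Lpest2m}) then follows by the same pointwise domination by $Mf_\ell$ and integration in $L^p$ as in Lemma \ref{Lpest}, since the cutoff $\psi^*_{j_\bT,\ell}$ is again pointwise $O(1)$.

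The genuinely new content is the bound by $2^{\beta_\ell m}|F_\ell|^{1/p}$. I would exploit that $\psi^*_{j_\bT,\ell}\circ\ti\Tr^{-1}_{\ell,j_\bT,m}$ effectively restricts the integrand to the set where $\ti\Tr^{-1}_{\ell,j_\bT,m}(x)\in(\Omega_{j_\bT})^c$, so that in units of $2^{-k_{j_\bT}}$ the argument of the cutoff lies within distance $2^{\e^2 m}$ from $\Omega^c$. Combining this with the pointwise estimate $|f_{\ell,j,0}(y)|\leq C|F_\ell|^{1/p}(1+2^{k_{j\ell}}{\rm dist}(y,\Omega^c))^{O(1)}$, which follows from $\Omega$ being defined through $M_p(M\Id_{F_\ell})$, yields a pointwise bound $\leq C\,2^{\beta_\ell m}|F_\ell|^{1/p}$. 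For $\ell=2$ the translation $\ti\Tr_{2,j,m}$ is the identity, so the only loss comes from the dilation factor in the definition of $\Omega_{j_\bT}$ and gives $\beta_2=\e^2$; for $\ell=1$ the shift $m_{j_\bT,1}\sim 2^m|I_\bT|$ forces one to work at the finer scale $2^{-k_{j_\bT 1}}$ and produces the larger exponent $\beta_1=1$. The derivative terms are treated identically upon noting that the prefactor $2^{-k_{j\ell}}$ in the semi-norm cancels $\|D\Phi_{\ell,j,0}\|_1\sim 2^{k_{j\ell}}$. The main obstacle is precisely the bookkeeping for $\ell=1$: the translation is much larger than $|I_\bT|$, so the standard decay factor $(1+|I_\bT|^{-1}{\rm dist}(\cdot,I_\bT))^{-N}$ is not directly applicable, and one must work with the translated set $\ti\Tr_{1,j_\bT,m}((\Omega_{j_\bT})^c)$ rather than $(\Omega_{j_\bT})^c$ itself, which is exactly what forces $\beta_1=1$ instead of $\e^2$.
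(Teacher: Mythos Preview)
Your handling of (\ref{Lpest1m}) and of the bound $\size_{\ell,m}(\bT)\le C\inf_{I_\bT}M_p(Mf_\ell)$ is essentially the paper's own argument: the extra factor $\psi^*_{j\ell}\circ\ti\Tr^{-1}_{\ell,j,m}$ is pointwise $O(1)$, so Lemma~\ref{Lpest} carries over verbatim.  (A small remark: in $\Delta^*_{\ell,m}(\bT)$ the functions $\Id^*_{j,n}$ and $f_{\ell,j,0}$ are evaluated at $x$, not at $\ti\Tr_{\ell,j,m}(x)$, so no change of variable is needed there.)

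The genuine gap is in the bound $\size_{\ell,m}(\bT)\le C\,2^{\beta_\ell m}|F_\ell|^{1/p}$.  Your pointwise scheme combines the restriction $\ti\Tr^{-1}_{\ell,j,m}(x)\in(\Omega_j)^c$, which says ${\rm dist}(\ti\Tr^{-1}_{\ell,j,m}(x),\Omega^c)\lesssim 2^{\e^2 m}2^{-k_j}$, with the growth estimate $|f_{\ell,j,0}(x)|\le C|F_\ell|^{1/p}\big(1+2^{k_{j\ell}}{\rm dist}(x,\Omega^c)\big)^{O(1)}$.  For $\ell=2$ under (\ref{2large1}) one has $k_{j2}\ge k_j+m$, so the product is only controlled by $2^{O(k_{j2}-k_j+\e^2 m)}\ge 2^{O(m)}$, not $2^{\e^2 m}$; the scale in the growth estimate for $f_{2,j,0}$ is the fine scale $k_{j2}$, while the cutoff $\psi^*_{j2}$ lives at the coarse scale $k_j$, and this mismatch destroys the argument.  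A second problem is that your pointwise bound, even if it held, is stated only at the top $(j_\bT,n_\bT)$ and gives no summability over the full tree needed for $\|\Delta^*_{\ell,m}(\bT)\|_p$.

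The paper avoids both issues by \emph{not} using the pointwise growth of $f_{\ell,j,0}$ at all.  It first reduces to the case $2^{\beta_\ell m+10}I_\bT\subset\Omega$, then stratifies $\bT=\bigcup_{L\ge[\beta_\ell m+10]}\bT_L$ according to the depth of $I_s$ in $\Omega$ (namely $2^LI_s\subset\Omega$ but $2^{L+1}I_s\not\subset\Omega$).  For each $s\in\bT_L$ one uses only the decay of $\Id^{**}_{j,n}\,(\psi^*_{j\ell}\circ\ti\Tr^{-1}_{\ell,j,m})$ at the \emph{coarse} scale $k_j$; because $L\ge\beta_\ell m+9$ the translation $\ti\Tr_{\ell,j,m}$ does not spoil the estimate ${\rm dist}(I_s,(\Omega_j)^c)\gtrsim 2^L|I_s|$, and one obtains a factor $2^{-100L}$ multiplying the standard local bound $(\inf_{I_s}M_p(Mf_\ell))^p|I_s|$.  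Summing in $L$ (geometric) and over the disjoint $I_s$'s at each level gives the claim.  The exponent $\beta_\ell$ enters solely through the lower bound on $L$, which is what the translation size $|m_{j1}-m_{j2}|\sim 2^m|I_s|$ forces for $\ell=1$ versus the identity for $\ell=2$.
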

\begin{proof}
Repeating a similar argument in the proof of (\ref{Lpest1}) and (\ref{Lpest2}),
we obtain easily (\ref{Lpest1m}) and part of (\ref{Lpest2m}). The only thing
we need to prove is
\begin{equation}\label{Lpest3m}
 \size_{\ell,m}(\bT)\leq C 2^{\beta_\ell m}|F_\ell|^{1/p}\,.
\end{equation}
Assume $2^{\beta_\ell m+10}I_\bT\subset \Omega$, otherwise (\ref{Lpest3m})
follows by the upper bound $\inf_{x\in I_\bT}M_p(Mf_\ell)(x)$. Let
$\bT_{L}$ be a  collection of all $s=(j,n)\in\bT$ such that
$ 2^{L}I_s\subset \Omega$ but $2^{L+1}I_s\nsubseteq \Omega$.
Then
$$ \bT= \bigcup_{L= [\beta_\ell m+10]}^\infty \bT_{L}  $$
Let ${\mathbb J}_L$ be the set of all time intervals $I_s$'s for $s\in\bT_L$.
Clearly, ${\mathbb J}_L$ is a set of disjoint intervals and
$\sum_{J\in{\mathbb J}_L}|J|\leq \min\{|I_\bT|, 1\}$. Thus it is sufficient
to show that for any $J\in {\mathbb J}_L$ and any $(j,n)\in\bT$ such that
$I_s=J$,
\begin{equation}\label{localLpest3m}
 \big\|\Id^{**}_{j,n}\big(\psi^*_{j\ell}\circ\ti\Tr^{-1}_{\ell,j,m}\big)\ti
f_{\ell,j,0}\big\|_p^p  \leq C_N\big( \inf_{x\in J}M_p(Mf_\ell)(x)\big)^{p} L^{-N} |J|\,
\end{equation}
holds for a large integer $N$, where
$\ti f_{\ell,j,0}$ is $f_{\ell,j,0}$ or $2^{-k_{j\ell}}Df_{\ell,j,0}$,
since the desired estimate follows by summing up all $L$'s and $J$'s.
By the definition of $\psi^*_{j\ell}$, we have
$$
\big|\Id^{**}_{j,n}(x) \psi^*_{j\ell}\circ \ti\Tr^{-1}_{\ell,j,m}(x)\big|\leq \frac{C}{ \big(1+2^{k_j}{\rm dist}(x, J)\big)^{200}
\big( 1+ 2^{k_j}{\rm dist}\big( \ti\Tr^{-1}_{\ell,j,m}(x), (\Omega_j)^c\big)  \big)^{200}}\,,
$$
which is clearly dominated by
$$
\frac{C}{ \big(1+2^{k_j}{\rm dist}(x, J)\big)^{100}
\big( 1+ 2^{k_j}{\rm dist}\big( J_{j,m}, (\Omega_j)^c\big)  \big)^{100}}\,,
$$
where $J_{j,m}$ is the interval $\{\ti\Tr_{\ell,j,m}(x): x\in J\}$. Since
$L\geq \beta_\ell m +9$, by the definition of $\ti\Tr_{\ell,j,m}$ we thus dominate $\big|\Id^{**}_{j,n}( \psi^*_{j\ell}\circ \ti\Tr^{-1}_{\ell,j,m})\big|$ by
$$
\frac{C}{ \big(1+2^{k_j}{\rm dist}(x, J)\big)^{100}
\big( 1+ 2^{k_j}{\rm dist}\big( J, (\Omega)^c\big)  \big)^{100}}\,.
$$
Thus we have
$$
\big\|\Id^{**}_{j,n}\big(\psi^*_{j\ell}\circ\ti\Tr^{-1}_{\ell,j,m}\big)\ti
f_{\ell,j,0}\big\|_p^p  \leq C
\big( \inf_{x\in J}M_p(Mf_\ell)(x)\big)^{p} L^{-100p} |J|\,,
$$
which yields (\ref{localLpest3m}). Therefore we finish the proof.
\end{proof}

\begin{lemma}\label{goodbmo1p11}
Suppose that  $s=(j,n)\in\bS$.
If $2^{k_{j\ell}}\sim 2^{k_j}$, then
\begin{equation}\label{smallboundp11}
\big\|\Id^{**}_{j,n} \big(\psi^*_{j\ell}\circ\ti\Tr^{-1}_{\ell,j,m}\big)f_{\ell,j,0}\big\|_\infty \leq
C\big\|f_\ell\big\|_{j,n,m}
\end{equation}
holds for $\ell\in\{1,2,3\}$, where $C$ is a constant independent of
$s,  f_\ell, m$, $L_1, L_2, M_1, M_2$.
\end{lemma}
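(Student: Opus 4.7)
The plan is to imitate the proof of Lemma \ref{goodbmo1} verbatim, treating the composite factor $\psi^*_{j\ell}\circ\ti\Tr^{-1}_{\ell,j,m}$ as an extra multiplicative weight that is essentially harmless at the scale $2^{-k_j}$. Set $\mu=\|f_\ell\|_{j,n,m}$ and let
$$G(x)=\Id^{**}_{j,n}(x)\,\big(\psi^*_{j\ell}\circ\ti\Tr^{-1}_{\ell,j,m}\big)(x)\,f_{\ell,j,0}(x).$$
The first summand in the definition (\ref{defnormmp11}) gives $\|G\|_p\leq \mu|I_s|^{1/p}$, which is the first input.

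Next I would bound $G'$ in $L^p$ by applying the product rule, which produces three terms. The term carrying $(\Id^{**}_{j,n})'$ gains a factor $C|I_s|^{-1}$ in $L^\infty$, so it contributes $O(\mu|I_s|^{-1+1/p})$ after invoking the first part of the semi-norm. The term carrying $Df_{\ell,j,0}$ is exactly the second summand in $\mu$, and under the hypothesis $2^{k_{j\ell}}\sim 2^{k_j}\sim|I_s|^{-1}$ the built-in factor $2^{-k_{j\ell}}$ absorbs the missing scale, giving again $O(\mu|I_s|^{-1+1/p})$. The genuinely new term is $\Id^{**}_{j,n}\cdot(\psi^*_{j\ell}\circ\ti\Tr^{-1}_{\ell,j,m})'\cdot f_{\ell,j,0}$; since $\ti\Tr^{-1}_{\ell,j,m}$ is a translation with unit derivative, a direct differentiation of (\ref{defofpsistar}) under the integral sign shows that $(\psi^*_{j\ell})'$ is pointwise bounded by $C\,2^{k_j}\,\ti\psi^*_{j\ell}$, where $\ti\psi^*_{j\ell}$ is the slightly enlarged kernel obtained by replacing the exponent $200$ by, say, $150$. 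Replacing $\psi^*$ by $\ti\psi^*$ in the semi-norm changes $\mu$ by at most a constant, so this term is also $O(\mu|I_s|^{-1+1/p})$.

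With $\|G\|_p\leq C\mu|I_s|^{1/p}$ and $\|G'\|_p\leq C\mu|I_s|^{-1+1/p}$ in hand, I follow the BMO argument of Lemma \ref{goodbmo1} without modification: on a dyadic interval $J$ with $|I_s|\leq|J|$, Hölder against the $L^p$ bound on $G$ gives $\inf_c\int_J|G-c|\leq C\mu|J|$; on a dyadic $J$ with $|I_s|\geq|J|$, the oscillation is bounded by $|J|\int_J|G'|$ and Hölder with the $L^p$ bound on $G'$ also yields $C\mu|J|$. Thus $\|G\|_{BMO}\leq C\mu$. Interpolating with the $L^p$ bound gives $\|G\|_q\leq C\mu|I_s|^{1/q}$ for every $p\leq q<\infty$. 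Finally the elementary identity $\|G\|_\infty\leq\sqrt{2}\,\|G\|_{p'}^{1/2}\|G'\|_p^{1/2}$ (obtained from $|G(x)|^2=2\int_{-\infty}^xGG'$ and Hölder) combined with the $L^{p'}$ bound and the $L^p$ derivative bound yields $\|G\|_\infty\leq C\mu$, which is (\ref{smallboundp11}).

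The only genuine obstacle is the middle derivative term, absent from Lemma \ref{goodbmo1}. The point to verify is that $(\psi^*_{j\ell})'$ really scales like $2^{k_j}$ and that the enlarged kernel $\ti\psi^*_{j\ell}$ that appears after this differentiation is dominated, up to an absolute constant, by the kernel defining the semi-norm $\mu$. Once this pointwise scaling lemma is in place, the interaction between the translation $\ti\Tr_{\ell,j,m}$ and the cutoff $\psi_{j\ell}$ plays no further role, and the rest of the argument is a mechanical replay of Lemma \ref{goodbmo1}; in particular the constant $C$ does not depend on $m$, $L_1$, $L_2$, $M_1$, $M_2$.
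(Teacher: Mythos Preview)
Your proposal is correct and follows the paper's own proof essentially line for line: define $G$, establish $\|G\|_p$ and $\|G'\|_p$ at the scale $|I_s|$, run the two-case BMO argument, interpolate, and finish with the $\|G\|_\infty\le\|G\|_{p'}^{1/2}\|G'\|_p^{1/2}$ trick. The only cosmetic difference is that the paper bundles the two ``weight-derivative'' terms (the derivative of $\Id^{**}_{j,n}$ and the derivative of $\psi^*_{j\ell}\circ\ti\Tr^{-1}_{\ell,j,m}$) into a single $C|I_s|^{-1}|G|$ contribution, whereas you split the product rule into three terms and treat the $\psi^*$-derivative separately; incidentally, differentiating the kernel in (\ref{defofpsistar}) actually raises the exponent (to $201$) rather than lowering it to $150$, so in fact $|(\psi^*_{j\ell})'|\le C\,2^{k_j}\psi^*_{j\ell}$ with the \emph{same} weight, and no ``enlarged kernel'' comparison is needed.
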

\begin{proof}
 Let $\mu=\big\|f_\ell\big\|_{j,n,m}$.
 By the definition of the semi-norm, we have
\begin{equation}\label{smalldelta2p11}
\big\|\Id^{**}_{j,n}\big(\psi^*_{j\ell}\circ\ti\Tr^{-1}_{\ell,j,m}\big) f_{\ell,j, 0}\big\|_p +
\big\|{|I_{s}|}\Id^{**}_{j,n}\big(\psi^*_{j\ell}\circ\ti\Tr^{-1}_{\ell,j,m}\big)
 Df_{\ell,j, 0} \big\|_p \leq \mu |I_s|^{1/p}\,.
\end{equation}
 First we prove the BMO estimate for the function, that is
\begin{equation}\label{smallbmop11}
\big\|\Id^{**}_{j,n}\big(\psi^*_{j\ell}\circ\ti\Tr^{-1}_{\ell,j,m}\big) f_{\ell,j,0}\big\|_{BMO} \leq C\mu\,.
\end{equation}
If $|I_s|\leq |J|$, by (\ref{smalldelta2p11}) we
have
\begin{eqnarray*}
  &   & \inf_c\int_J\big| \Id^{**}_{j,n}(x)\big(\psi^*_{j\ell}\circ\ti\Tr^{-1}_{\ell,j,m}\big)(x)f_{\ell,j,0}(x)-c\big| dx \\
 & \leq & \big\|\Id^{**}_{j,n}
  \big(\psi^*_{j\ell}\circ\ti\Tr^{-1}_{\ell,j,m}\big)
  f_{\ell,j,0} \big\|_p|J|^{1-\frac{1}{p}}
 \,\leq \, \mu |I_s|^{\frac{1}{p}}|J|^{1-\frac{1}{p}}
  \, \leq \, \mu |J|\,.
\end{eqnarray*}
If $|I_s|\geq |J|$, by (\ref{smalldelta2p11}) we obtain that
\begin{eqnarray*}
 &  & \inf_c\int_J\big| \Id^{**}_{j,n}(x)
 \big(\psi^*_{j\ell}\circ\ti\Tr^{-1}_{\ell,j,m}\big)(x) f_{\ell,j,n_\ell}(x)-c\big| dx\\
 & \leq & |J|\int_J\bigg| \bigg(\Id^{**}_{j,n}
  \big(\psi^*_{j\ell}\circ\ti\Tr^{-1}_{\ell,j,m}\big)
f_{\ell,j,n_\ell}\bigg)' (x)\bigg| dx\\
 & \leq & C|J||I_s|^{-1}
 \int_J \big|\Id^{**}_{j,n}(x) \big(\psi^*_{j\ell}\circ\ti\Tr^{-1}_{\ell,j,m}\big)(x)f_{\ell,j,n_\ell}(x)  \big|dx \\
  & &  + |J|\int_J\big| \Id^{**}_{j,n}(x)
 \big(\psi^*_{j\ell}\circ\ti\Tr^{-1}_{\ell,j,m}\big)(x)
Df_{\ell,j,n_\ell}(x) \big|dx\\
 & \leq & C|J||I_s|^{-1}\big\|\Id^{**}_{j,n}\big(\psi^*_{j\ell}\circ\ti\Tr^{-1}_{\ell,j,m}\big)f_{\ell,j,n_\ell}\big\|_p|J|^{1-\frac{1}{p}} + |J|\big\| \Id^{**}_{j,n}\big(\psi^*_{j\ell}\circ\ti\Tr^{-1}_{\ell,j,m}\big)Df_{\ell,j,n_\ell}\big\|_p
 |J|^{1-\frac{1}{p}}\\
 &\leq & C\mu |J|^{2-\frac{1}{p}} |I_s|^{\frac{1}{p}-1}\,\leq\, C\mu|J|\,.
\end{eqnarray*}
Thus we get the BMO estimate (\ref{smallbmop11}). Interpolating (\ref{smallbmop11})
and (\ref{smalldelta2p11}), we have for any $p\leq q <\infty$,
$$
\big\|\Id^{**}_{j,n}\big(\psi^*_{j\ell}\circ\ti\Tr^{-1}_{\ell,j,m}\big)f_{\ell,j,n_\ell}\big\|_q\leq C\mu |I_s|^{1/q}\,.
$$
Notice that an integration by part and H\"older inequality yield that
$$
\big\|\Id^{**}_{j,n}\big(\psi^*_{j\ell}\circ\ti\Tr^{-1}_{\ell,j,m}\big) f_{\ell,j,n_\ell}\big\|_\infty \leq
 \big\| \Id^{**}_{j,n}\big(\psi^*_{j\ell}\circ\ti\Tr^{-1}_{\ell,j,m}\big) f_{\ell,j,n_\ell}  \big\|_{p'}^{1/2}
 \big\| \big( \Id^{**}_{j,n}\big(\psi^*_{j\ell}\circ\ti\Tr^{-1}_{\ell,j,m}\big) f_{\ell,j,n_\ell}  \big)'  \big\|_p^{1/2},
$$
where $1/p+1/p'=1$. Hence the desired estimate (\ref{smallboundp11})
 follows by (\ref{smalldelta2p11}) and the $L^{p'}$ estimate for the functions.
\end{proof}

\begin{lemma}\label{goodbmo2p11}
For any tree $\bT $ in $\bS$,
let
\begin{equation}\label{tideltalm}
\ti\Delta_{\ell,m}(\bT)(x)=\bigg(\sum_{(j,n)\in \bT}
\big| \ti\Id^{*}_{j,n}\big(\ti\Tr_{\ell,j,m}(x)\big)
 \ti\psi^{*}_{j\ell}(x)f_{\ell,j,0}\big(\ti\Tr_{\ell,j,m}(x)\big|^2\bigg)^{1/2}\,.
\end{equation}
Then for $\ell=1$ we have
\begin{equation}\label{smallBMOp110}
\big\| \Delta_{\ell,m}(\bT) \big\|_{BMO}\leq C \size^*_\ell(\bT) \,,
\end{equation}
\begin{equation}\label{smallBMOp11}
 \big\| \ti\Delta_{\ell,m}(\bT) \big\|_{BMO}  \leq C m\,\size^*_\ell(\bT) \,,
\end{equation}
\begin{equation}\label{Lqestp11}
 \big\| \ti\Delta_{\ell,m}(\bT) \big\|_q\leq
 C m^{1-2/q}\size^*_\ell(\bT)|I_\bT|^{1/q} \,,
\end{equation}
where $q\geq 2$ and $C$ is a constant independent of $ \bT, \bS, L_1, L_2, M_1, M_2,
 f_\ell, n_\ell$.
\end{lemma}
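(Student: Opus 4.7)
My plan is to mimic the proof of Lemma \ref{goodbmo2}. For a dyadic interval $J$, I split the sum defining $\Delta_{1,m}(\bT)$ (or $\ti\Delta_{1,m}(\bT)$) into three parts: $\bT_J := \{s \in \bT : I_s \subseteq J\}$; the inner part $\{s\in \bT\setminus \bT_J : |I_s| \leq |J|\}$; and the outer part $\{s \in \bT : |I_s| > |J|\}$. For the first two parts, H\"older and $\ell^p$-summation of local $L^p$-decay estimates reduce matters to the semi-norm $\|\cdot\|_{j,n,m}$, which is controlled by the size via Lemma \ref{goodbmo1p11}. For the outer part, I differentiate the product $\Id^{*}_{j,n}\cdot (\psi^*_{j\ell}\circ \ti\Tr^{-1}_{\ell,j,m}) \cdot f_{\ell,j,0}$, integrate over $J$, and again reduce to $\|\cdot\|_{j,n,m}$ through the two terms arising from the derivative hitting the cutoffs versus $f_{\ell,j,0}$ itself.

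For (\ref{smallBMOp110}), the function $\Delta_{1,m}(\bT)$ does not compose $f_{1,j,0}$ with any translation---the translation enters only through the bounded cutoff $\psi^*_{j1}\circ \ti\Tr^{-1}_{1,j,m}$, whose derivative is of order $2^{k_j}$ near the boundary of $\Omega_j$ and rapidly decaying elsewhere. Consequently the three-part decomposition is exactly parallel to the proof of Lemma \ref{goodbmo2} and produces $C\mu|J|$ with $\mu = \size^*_{1,m}(\bT)$ and no $m$-loss.

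The main obstacle is (\ref{smallBMOp11}), where $\ti\Delta_{1,m}(\bT)$ composes $f_{1,j,0}$ with $\ti\Tr_{1,j,m}(x)=x+m_{j1}-m_{j2}$. Changing variables $y=\ti\Tr_{1,j,m}(x)$ inside each summand turns every single-scale bound into one of the previous type, so the inner parts are again bounded by $C\mu|J|$. The outer part is the serious issue: for $|I_s|>|J|$ the shifted interval $\ti\Tr_{1,j,m}(J)$ may intersect $I_s$ or the boundary of $\Omega_j$ for a whole range of scales $j$, and an accounting analogous to the one carried out in the final paragraph of the proof of Lemma \ref{treeout} shows that at most $O(m)$ such scales contribute a non-trivial amount while the remaining scales are killed by the Schwartz tails of $\ti\Id^{*}_{j,n}$ and $\ti\psi^{*}_{j1}$. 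Summing these $O(m)$ contributions, each of order $\mu|J|$, yields the factor $m$ in (\ref{smallBMOp11}).

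The bound (\ref{Lqestp11}) then follows by standard $L^2$--$BMO$ interpolation. The $L^2$ estimate $\|\ti\Delta_{1,m}(\bT)\|_2 \leq C\size^*_{1,m}(\bT)|I_\bT|^{1/2}$ is obtained term by term: the change of variables $y=\ti\Tr_{1,j,m}(x)$ is an isometry on $L^2$, reducing it to $\|\Delta^*_{1,m}(\bT)\|_2$, which is controlled by Lemma \ref{Lpestm}; no $m$-loss enters. Combining with (\ref{smallBMOp11}) through $\|\ti\Delta_{1,m}(\bT)\|_q \leq C\|\ti\Delta_{1,m}(\bT)\|_2^{2/q}\|\ti\Delta_{1,m}(\bT)\|_{BMO}^{1-2/q}$ yields the claimed $Cm^{1-2/q}\size^*_{1,m}(\bT)|I_\bT|^{1/q}$.
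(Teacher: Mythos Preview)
Your overall architecture mirrors the paper's, and your treatment of (\ref{smallBMOp110}) and (\ref{Lqestp11}) is essentially correct. But you have misplaced the source of the factor $m$ in (\ref{smallBMOp11}).

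The claim that ``changing variables $y=\ti\Tr_{1,j,m}(x)$ inside each summand turns every single-scale bound into one of the previous type, so the inner parts are again bounded by $C\mu|J|$'' is where the gap lies. The change of variables identifies the $L^p$ norm of each summand with its untranslated counterpart, but it does \emph{not} preserve localization to $J$: after the substitution the integration domain becomes the $j$-dependent interval $\ti\Tr_{1,j,m}(J)$, shifted by $m_{j1}-m_{j2}\sim 2^m|I_s|$. Consequently the decay factor for $s\in\bT\setminus\bT_J$ with $|I_s|\le|J|$ involves ${\rm dist}\big(J,\ti\Tr^{-1}_{1,j,m}(I_s)\big)$ rather than ${\rm dist}(J,I_s)$. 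For scales $|I_s|\le 2^{-m-10}|J|$ the shift is small compared to $|J|$ and the two distances are comparable; but for the $\sim m$ scales with $2^{-m-10}|J|<|I_s|\le |J|$ the shifted intervals $\ti\Tr^{-1}_{1,j,m}(I_s)$ can land inside $3J$ even though $I_s\not\subset 3J$, so there is no decay at all. Each such scale contributes $O(\mu|J|)$, which is exactly the $Cm\mu|J|$ appearing in the paper. This intermediate-scale piece is where the $m$ really enters.

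Conversely, the outer part $\{s:|I_s|>|J|\}$ is bounded by $C\mu|J|$ \emph{without} any $m$-loss: after differentiating, the terms $R_1$ and $R_2$ carry an intrinsic geometric decay in $|J|/|I_s|$ (coming from the factor $|I_s|^{-1}$) that sums regardless of the translation in the localization weight $(1+|I_s|^{-1}{\rm dist}(J,\ti\Tr^{-1}_{1,j,m}(I_s)))^{-100}\le 1$. Your ``$O(m)$ scales'' counting there is unnecessary, while your asserted $C\mu|J|$ bound for the intermediate-scale inner part is unjustified without precisely that counting.
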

\begin{proof}
(\ref{smallBMOp110}) can be obtained by a routine way as  we did
for Lemma \ref{goodbmo2}. We omit the details.  We should only prove (\ref{smallBMOp11}).  (\ref{Lqestp11}) is a simple consequence of (\ref{smallBMOp110}), (\ref{smallBMOp11}) and an interpolation argument.

Clearly by a change of variables $\big\|\Delta_{\ell,m}(\bT) \big\|_2
 = \big\|\ti\Delta_{\ell,m}(\bT) \big\|_2$.
Thus  (\ref{smallBMOp110}) and an interpolation yield
\begin{equation}\label{L2estmp11}
\big\|\ti\Delta_{\ell,m}(\bT) \big\|_2\leq
 C \size^*_\ell(\bT)|I_\bT|^{1/2} \,.
\end{equation}

Let $\mu=\size^*_\ell(\bT)$.
Let $J$ be a dyadic interval and $\bT_J=\{s\in\bT: I_s\subseteq 3J\}$.
We then dominate $\inf_c\int_J\big|\Delta_\ell(\bT)(x) - c\big|dx$ by a sum of
the following three parts.
$$
  \int_J \bigg(\sum_{s\in \bT_J} \big|\ti\Id^{*}_{j,n}\big(\ti\Tr_{\ell,j,m}(x)\big) \ti\psi^{*}_{j\ell}(x)f_{\ell,j,0}\big(\ti\Tr_{\ell,j,m}(x)\big)
\big|^2\bigg)^{1/2} dx\,,
$$
$$
 \int_J \bigg(\sum_{\substack{s\in \bT\backslash\bT_J\\|I_s|\leq |J|}} \big|
 \ti\Id^{*}_{j,n}\big(\ti\Tr_{\ell,j,m}(x)\big)
 \ti\psi^{*}_{j\ell}(x)f_{\ell,j,0}\big(\ti\Tr_{\ell,j,m}(x)\big)
 \big|^2\bigg)^{1/2} dx\,,
$$
and
$$
 \inf_c\int_J \bigg|\bigg(\sum_{\substack{s\in \bT\backslash\bT_J\\|I_s|>|J|}} \big|\ti\Id^{*}_{j,n}\big(\ti\Tr_{\ell,j,m}(x)\big)
 \ti\psi^{*}_{j\ell}(x)f_{\ell,j,0}\big(\ti\Tr_{\ell,j,m}(x)\big)
\big|^2\bigg)^{1/2}-c\bigg| dx\,.
$$

$\bT_J$ can be decomposed to a union of trees $\bT_{J,k}$'s
such that the time intervals $I_{\bT_{J,k}}$'s are disjoint
and all of them are contained in $3J$. Using Cauchy-Schwarz inequality, the first part is estimated by
$$
\big( \sum_{k} \big\|\ti\Delta_{\ell,m}(\bT_{J,k}) \big\|_2^2 \big)^{1/2}
|J|^{1/2}\,.
$$
Appying (\ref{L2estmp11}), we dominated the first part by $C\mu |J|$.

Since $p\leq 2$ we estimate the second part by
\begin{eqnarray*}
  & &  \bigg\| \bigg(\sum_{\substack{s\in \bT\backslash\bT_J\\|I_s|\leq |J|}} \big|(\ti\Id^{*}_{j,n}\circ\ti\Tr_{\ell,j,m})
\ti\psi^*_{j\ell}
 (f_{\ell,j,n_\ell}\circ\ti\Tr_{\ell,j,m} )\big|^2\bigg)^{1/2}   \bigg\|_{L^p(J)}|J|^{1-\frac{1}{p}}\\
 & \leq &  \bigg(\sum_{\substack{s\in \bT\backslash\bT_J\\|I_s|\leq |J|}} \big\| (\ti\Id^{*}_{j,n}\circ\ti\Tr_{\ell,j,m})
\ti\psi^*_{j\ell} (f_{\ell,j,n_\ell}\circ\ti\Tr_{\ell,j,m})
\big\|^p_{L^p(J)}\bigg)^{1/p}
|J|^{1-\frac{1}{p}}\\
  & \leq   & \bigg(\sum_{\substack{s\in \bT\backslash\bT_J\\|I_s|\leq |J|}}
 \frac{ C
 \big\| (\Id^{**}_{j,n}\circ\ti\Tr_{\ell,j,m})
 \psi^*_{j\ell} (f_{\ell,j,n_\ell}\circ\ti\Tr_{\ell,j,m} )    \big\|^p_p  }
 {\big( 1+ |I_s|^{-1}{\rm dist}(J, \ti\Tr^{-1}_{\ell,j,m}(I_s))\big)^{100}}\bigg)^{1/p}|J|^{1-\frac{1}{p}}\\
 &\leq & \mu \bigg(\sum_{\substack{s\in \bT\backslash\bT_J\\|I_s|\leq |J|}}
 \frac{ C |I_s| }
 {\big( 1+ |I_s|^{-1}{\rm dist}(J, \ti\Tr^{-1}_{\ell,j,m}(I_s))\big)^{100}}   \bigg)^{1/p} |J|^{1-\frac{1}{p}}\,,
\end{eqnarray*}
where $\ti\Tr^{-1}_{\ell,j,m}(I_s)$ is the interval
$\{\ti\Tr^{-1}_{\ell,j,m}(x): x\in I_s\}$. Observe that
if $|I_s|\leq 2^{-m-10}|J|$ and $s\in\bT\backslash\bT_J$, then
${\rm dist}(J, \ti\Tr^{-1}_{\ell,j,m}(I_s))\sim {\rm dist}(J, I_s)$.
Thus summing for all $s$ in this case, we get the desired estimate $C\mu|J|$.
In the remaining case, there are only $10m$ different scales for
$|I_s|$'s since $s$'s satisfy $2^{-m-10}|J|< |I_s|\leq |J|$.
The worst situation is that when $\ti\Tr^{-1}_{\ell,j,m}(I_s)
\cap J\neq\emptyset$, because otherwise ${\rm dist}(J, \ti\Tr^{-1}_{\ell,j,m}(I_s))$ can be replaced by
${\rm dist}(\partial J, \ti\Tr^{-1}_{\ell,j,m}(I_s))$ and thus
the desired estimate follows. But in this situation,
$\ti\Tr^{-1}_{\ell,j,m}(I_s)$ must be a subset of $3J$ since
$|I_s|\leq |J|$. For all $\ti\Tr^{-1}_{\ell,j,m}(I_s) \subset 3J$ with a fixed scale, the sum of $|I_s|$'s is no more than $3|J|$. Summing for at most
$10m$ different scales, we thus get the upper bound
$Cm\mu|J|$. Hence the second part is dominated by $Cm\mu|J|$.

The third part is estimated by
\begin{eqnarray*}
  &  &  \bigg(\inf_c\int_J \bigg|\bigg(\sum_{\substack{s\in \bT\backslash\bT_J\\|I_s|>|J|}}
\big|\ti\Id^{*}_{j,n}\big(\ti\Tr_{\ell,j,m}(x)\big)
 \ti\psi^{*}_{j\ell}(x)f_{\ell,j,0}\big(\ti\Tr_{\ell,j,m}(x)\big)
 \big|^2\bigg)^{1/2}-c\bigg|^2dx\bigg)^{1/2}|J|^{1/2}\\
  & \leq &\bigg(\inf_c\int_J \bigg|\sum_{\substack{s\in \bT\backslash\bT_J\\|I_s|>|J|}} \big|\ti\Id^{*}_{j,n}\big(\ti\Tr_{\ell,j,m}(x)\big)
 \ti\psi^{*}_{j\ell}(x)f_{\ell,j,0}\big(\ti\Tr_{\ell,j,m}(x) \big)\big|^2-c\bigg|dx\bigg)^{1/2}|J|^{1/2}\\
  & \leq & C\bigg(\int_J \sum_{\substack{s\in \bT\backslash\bT_J\\|I_s|>|J|}} \bigg|
  \bigg(\big|\ti\Id^{*}_{j,n}\big(\ti\Tr_{\ell,j,m}(x)\big)
 \ti\psi^{*}_{j\ell}(x)f_{\ell,j,0}\big(\ti\Tr_{\ell,j,m}(x) \big)\big|^2\bigg)'\bigg|dx  \bigg)^{1/2}|J|\,,
\end{eqnarray*}
which is dominated by a sum of following two terms,
$$
R_1= C\bigg(\int_J \sum_{\substack{s\in
\bT\backslash\bT_J\\|I_s|>|J|}}
  |I_s|^{-1}\big|\ti\Id^{*}_{j,n}\big(\ti\Tr_{\ell,j,m}(x)\big)
 \ti\psi^{*}_{j\ell}(x)f_{\ell,j,0}\big(\ti\Tr_{\ell,j,m}(x) \big)
 \big|^2 dx  \bigg)^{1/2}|J|\,,
$$
and
$$
R_2= C\bigg(\int_J \sum_{\substack{s\in
\bT\backslash\bT_J\\|I_s|>|J|}}\big|\ti\Id^{*}_{j,n}\big(\ti\Tr_{\ell,j,m}(x)\big)
 \ti\psi^{*}_{j\ell}(x)f_{\ell,j,0}\big(\ti\Tr_{\ell,j,m}(x) \big)\big|
\big| G_{\ell,j,m}(x)
 \big| dx \bigg)^{1/2}|J|\,,
$$
where $G_{\ell,j,m}$ is the function defined by
$$
 G_{\ell,j,m}(x)= \ti\Id^{*}_{j,n}\big(\ti\Tr_{\ell,j,m}(x)\big)
 \ti\psi^{*}_{j\ell}(x)  Df_{\ell,j,0}\big(\ti\Tr_{\ell,j,m}(x)\big)
$$

By Lemma \ref{goodbmo1p11}, we see that for any
$q\geq p$,
$$
 \big\|(\Id^{**}_{j,n}\circ\ti\Tr_{\ell,j,m} )\psi^{*}_{j\ell}
  (f_{\ell,j,n_\ell}\circ\ti\Tr_{\ell,j,m} )\big\|_q\leq
 C\mu|I_s|^{1/q}\,.
$$
Thus, by H\"older inequality, the first term $R_1$ is estimate by
\begin{eqnarray*}
 &  & C\bigg( \sum_{\substack{s\in
\bT\backslash\bT_J\\|I_s|>|J|}}\frac{|I_s|^{-1}
\big\|(\Id^{**}_{j,n}\circ\ti\Tr_{\ell,j,m} )\psi^{*}_{j\ell}
  (f_{\ell,j,n_\ell}\circ\ti\Tr_{\ell,j,m} )    \big\|_4^2 |J|^{1/2}
}{\big(1+|I_s|^{-1}{\rm
dist}(J, \ti\Tr^{-1}_{\ell,j,m}(I_s))\big)^{100}}
\bigg)^{1/2}|J| \\
 & \leq & C\mu\bigg( \sum_{\substack{s\in
\bT\backslash\bT_J\\|I_s|>|J|}}\frac{|I_s|^{-1/2}|J|^{1/2}}
{\big(1+|I_s|^{-1}{\rm dist}(J, \ti\Tr^{-1}_{\ell,j,m}(I_s)  )\big)^{100}}\bigg)^{1/2}|J| \,\,\leq \, C\mu |J|\,.
\end{eqnarray*}
It is obvious  by the fact $2^{k_{j\ell}}\sim 2^{k_j}$
 when $\ell=1$  and the definition of the semi-norm that
\begin{equation}
 \|G_{\ell,j,m}\|_p\leq \|f_\ell\|_{j,n,m}|I_s|^{1/p-1}\,.
\end{equation}
Thus the second term $R_2$ is estimated by
\begin{eqnarray*}
  &  &C\bigg(\sum_{\substack{s\in
\bT\backslash\bT_J\\|I_s|>|J|}}\big\|
\big(\ti\Id^{*}_{j,n}\circ\ti\Tr_{\ell,j,m}\big)
 \ti\psi^{*}_{j\ell} \big(f_{\ell,j,0}\circ\ti\Tr_{\ell,j,m}\big)
 \big\|_{L^{p'}(J)} \big\|G_{\ell,j,m}\big\|_p\bigg)^{1/2}|J|\\
 & \leq &C\bigg( \mu \sum_{\substack{s\in
\bT\backslash\bT_J\\|I_s|>|J|}}\frac{|I_s|^{\frac{1}{p}-1}
\big\| (\Id^{**}_{j,n}\circ\ti\Tr_{\ell,j,m} )\psi^{*}_{j\ell}
  (f_{\ell,j,n_\ell}\circ\ti\Tr_{\ell,j,m} )
 \big\|_{p'+1}|J|^{\frac{1}{p'(p'+1)}} }
{\big(1+|I_s|^{-1}{\rm dist}(J,  \ti\Tr^{-1}_{\ell,j,m}(I_s) )\big)^{100}}
 \bigg)^{1/2}|J|\\
 & \leq & C\mu\bigg( \sum_{\substack{s\in
\bT\backslash\bT_J\\|I_s|>|J|}}\frac{|I_s|^{-\frac{1}{p'(p'+1)}}
|J|^{\frac{1}{p'(p'+1)}} } {\big(1+|I_s|^{-1}{\rm dist}(J,
  \ti\Tr^{-1}_{\ell,j,m}(I_s) )\big)^{100}} \bigg)^{1/2}|J|\,\,\leq \,\, C\mu|J|\,.
\end{eqnarray*}
This completes the proof of (\ref{smallBMOp11}).
\end{proof}

\begin{lemma}\label{treeoutp11}
Let $\bT$ be a tree in $\bS$ and ${\bf P}$ be a subset of $\bS$.
Suppose that $\bP\cap\bT=\emptyset$ and  $\bT$ is a maximal tree in $\bP\cup
\bT$. Then we have
\begin{equation}\label{treeout1p11}
\big|\La_{\bP\cup\bT,\Omega,  m}(f_1, f_2, f_3)-\La_{\bP, \Omega, m}(f_1, f_2, f_3)\big|
 \leq \big|\La_{\bT,\Omega,  m}(f_1, f_2, f_3)\big|+Cm\prod_{\ell=1}^2\size^*_\ell(\bT\cup \bP)|I_\bT|\,,
\end{equation}
where $C$ is independent of $f_1, f_2, f_3, L_1, L_2, M_1, M_2, \bP$, $\bT$.
\end{lemma}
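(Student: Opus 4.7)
The plan is to follow the proof of Lemma \ref{treeout} closely, but replacing the size estimate at index $\ell=3$ by a pointwise bound that uses $f_3\in X(F_3')$ together with the $\psi_{j3}$ truncation; this is precisely why no third-index size appears on the right of the claim. First I expand the difference $\La_{\bP\cup\bT,\Omega,m}-\La_{\bP,\Omega,m}$ by writing each factor $\sum_{n\in(\bP\cup\bT)_j}\ti F_{\ell,j,n,m}$ as the sum of a $\bT_j$-piece and a $\bP_j$-piece and then multiplying out. The purely $\bT$-contribution is exactly $\La_{\bT,\Omega,m}$, which is left on the right-hand side of the inequality; the remaining finitely many ``mixed'' trilinear forms each carry at least one $\bT_j$-factor and at least one $\bP_j$-factor, and these are what I must estimate by $Cm\prod_{\ell=1}^{2}\size^*_{\ell,m}(\bP\cup\bT)|I_\bT|$.

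For each mixed form I localize the $x$-integral to dyadic intervals $I$ of length $2^{-k_j}$ and apply H\"older with the $\bT_j$-factor in $L^\infty(I)$, the $\bP_j$-factor in $L^2(I)$, and the remaining factor in $L^2(I)$. On the $\bT_j$-factor of index $\ell_1$, Lemma \ref{goodbmo1p11} together with the rapid decay of $\Id^*_{j,n}$ after composition with $\ti\Tr_{\ell_1,j,m}$ yields
\[
\bigg\|\sum_{n\in\bT_j}\ti F_{\ell_1,j,n,m}\bigg\|_{L^\infty(I)}\le \frac{C_N\,\size^*_{\ell_1,m}(\bP\cup\bT)}{\bigl(1+2^{k_j}{\rm dist}(\ti\Tr_{\ell_1,j,m}(I),I_\bT)\bigr)^{N}}.
\]
On the $\bP_j$-factor of index $\ell_2$, the maximality of $\bT$ in $\bP\cup\bT$ forces $I_s\cap I_\bT=\emptyset$ or $I_s\supsetneq I_\bT$ for every $s\in\bP$, so the analogous estimate holds with ${\rm dist}(\ti\Tr_{\ell_2,j,m}(I),(I_\bT)^c)$ in the denominator and an extra $|I|^{1/2}$. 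For the third factor: if its index lies in $\{1,2\}$, I bound it in $L^2(I)$ by Lemma \ref{Lpestm}; if its index is $3$, I use $|f_3|\le 1$ on $F_3'=F_3\setminus\Omega$ together with the support property of $\psi_{j3}$ away from $\Omega_j$ to obtain the same $O(|I|^{1/2})$ bound, with no size factor.

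Summing the resulting estimate over dyadic intervals $I$ with $|I|=2^{-k_j}$ and then over $j$ is carried out exactly as in Lemma \ref{treeout}: for $j_\bT\le j\le j_\bT+10m$ there are only $O(m)$ scales of $j$, which produces the factor of $m$, while for $j>j_\bT+10m$ the translation shifts $m_{j\ell}=2^{m-jL_\ell-M_\ell}$ are small enough that the portion of the sum where both decay denominators degenerate has total mass $O(2^{-8m}|I_\bT|)$. The main obstacle is precisely this $j$-summation in the intermediate regime where $\ti\Tr_{\ell_1,j,m}(I)$ can straddle $\partial I_\bT$ while $\ti\Tr_{\ell_2,j,m}(I)$ sits on the opposite side of $I_\bT$, so that neither decay factor is useful on its own; this is resolved by the translation-difference inequality ${\rm dist}(\ti\Tr_{\ell_1,j,m}(I),\ti\Tr_{\ell_2,j,m}(I))\le 2^m|I|$ combined with the scale count, exactly as in the preceding lemma. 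Combining these bounds over all choices of permutation $(\ell_1,\ell_2,\ell_3)$ of $(1,2,3)$ then yields the claimed inequality.
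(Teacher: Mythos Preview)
Your proposal is correct and follows exactly the approach the paper intends: the paper itself gives no proof for Lemma \ref{treeoutp11}, only stating that it is ``similar to the proof of Lemma \ref{treeout}'' and leaving the details as an exercise. Your adaptation---keeping $\La_{\bT,\Omega,m}$ as a separate term on the right, replacing the third-index size by the pointwise bound $|f_{3,j,0}|\le C$ coming from $f_3\in X(F_3')$ and the $\psi_{j3}$ cutoff, and otherwise reproducing the localization and $j$-summation argument of Lemma \ref{treeout}---is precisely the intended route. One small point of care: as in Lemma \ref{treeout}, the factor placed in $L^\infty(I)$ via Lemma \ref{goodbmo1p11} should be the one whose index $\ell$ satisfies $2^{k_{j\ell}}\sim 2^{k_j}$ (here $\ell=1$, by hypothesis (\ref{2large1})), rather than automatically the $\bT_j$-factor; when the $\bT_j$-factor carries a different index you instead put the index-$1$ factor (whether $\bT_j$, $\bP_j$, or $(\bP\cup\bT)_j$) in $L^\infty$ and still extract the two decay denominators from the $\Id^*_{j,n}$ localizations of the $\bT_j$- and $\bP_j$-factors---but this is exactly the ``without loss of generality, assume $k_{j1}=k_j$'' manoeuvre already present in the proof of Lemma \ref{treeout}.
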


The proof is similar to the proof of Lemma \ref{treeoutp11}. We omit
the details and leave it as an exercise to the readers.

\subsection{Proof of Lemma \ref{uniestp11}}

It is easy to prove a size estimate for the trilinear form on a
single tree, that is, for any tree $\bT$,
\begin{equation}\label{sizeestp11}
\big|\La_{\bT, \Omega, m}(f_1, f_2, f_3) \big|\leq Cm^{2/p-1}
\prod_{\ell=1}^2
 \size_\ell^*(\bT)|I_\bT|\,,
\end{equation}
where $C$ is independent of $L_1, L_2, M_1, M_2, m, f_1, f_2, f_3,
\bT$.

In fact, by H\"older inequality, we estimate $|\La_{\bT, \Omega,
m}|$ by
$$
\big\|\ti\Delta_{1,m}^*(\bT)\big\|_{p'}\big\|\Delta_{2,m}^*(\bT)\big\|_{p}\,.
$$
By (\ref{Lqestp11}) and the definition of size, we obtain
(\ref{sizeestp11}) immediately.

Lemma \ref{prilem} still holds for our new sizes of trees and 
$\bS$. Applying this organization
lemma inductively for $\bS$, we decompose
\begin{equation}\label{gooddep11}
\bS=\bigcup_{\sigma} \bS_\si\,,
\end{equation}
where  $\sigma $ ranges over all possible dyadic numbers,
$\bS_\si=\cup_{\bT\in\mathcal F_\si}\bT$ such that $\mathcal F_\si $
is a collection of  maximal trees with
\begin{equation}\label{ctestp11}
 \ct(\bS_\si)\leq C\si^{-p}\,,
\end{equation}
and
\begin{equation}\label{sizekaestp11}
\size^*_\ell(\bS_\si)\leq \si|F_\ell|^{1/p}
\end{equation}
holds for all $\ell\in\{1,2\}$.

By (\ref{Lpest2m}),  the upper bound in the size
estimates for $\bS_\si$ can be sharpened by,
\begin{equation}\label{sizekaest123p11}
\size^*_\ell(\bS_\si)\leq \min\{1, 2^{\beta_\ell m}|F_\ell|^{1/p},
\si|F_\ell|^{1/p} \}\,.
\end{equation}
Hence by Lemma \ref{treeoutp11} and (\ref{sizeestp11}) we estimate
$\La_{\bS,\Omega, m}$ by
$$
 \sum_{\si}\sum_{\bT\in\mathcal F_\si}
 m\prod_{\ell=1}^2\size^*_{\ell}(\bS_\si)|I_\bT|
\,.
$$

Applying (\ref{sizekaest123p11}) and (\ref{ctestp11}), we thus
dominate $\big|\La_{\bS,\Omega, m}(f_1, f_2, f_3)\big|$ by
\begin{equation}\label{finalestpp11}
 Cm\sum_{\sigma }\si^{-p} \min\{1, 2^m|F_1|^{1/p},  \si|F_1|^{1/p}\}
   \min\{1, 2^{\e^2 m}|F_2|^{1/p}, \si|F_2|^{1/p}\}\,,
\end{equation}
which clearly implies (\ref{psmallest11}).
Therefore we complete the proof of Lemma {\ref{uniestp11}}.\\


\begin{thebibliography}{4}


\bibitem{calderon} {\scshape Calder\'on, A}.
Commutators of singular integral operators,
{\em Proc. Natl. Acad. Sci. USA} {\bf 53} (1977) 1092--1099.


\bibitem{CCW} {\scshape Carbery, A., Christ, M. and  Wright, J. } { Multidimensional van
der Corput and sublevel set estimates}, {\em J. Amer. Math. Soc}. {\bf 12}
(1999), 981--1015. MR1683156 (2000h:42010)

\bibitem{Christ1}  {\scshape Christ, M}.
{Hilbert transforms along curves, I: Nilpotent groups }, {\em Ann. Math}.
 {\bf 122} (1985), 575-596. MR0819558 (87f:42039a)

\bibitem{Christ2} {\scshape Christ, M}.
{ Hilbert transforms along curves, II: A flat case}, {\em Duke Math}.
J. {\bf 52}  (1985), 887-894. MR0816390 (87f:42039b)

\bibitem{CLTT} {\scshape Christ, M., Li, X., Tao T.,  and  Thiele, C. M.}
{On multilinear oscillatory integrals, nonsingular and singular},
{\em Duke Math. J}. {\bf 130} (2005), 321-351. MR2181092 (2006g:42012) 




\bibitem{FL} {\scshape Fan, D. and  Li, X}. 
{A bilinear oscillatory integrals along parabolas. }
 arXiv:0709.2907.

\bibitem{Fur} {\scshape Furstenberg, H},  {Nonconventional ergodic averages}, 
{\em Proceedings of Symposia in Pure Math.}, {\bf 50} (1990), 43--56. MR1067751 (91i:28011) 


\bibitem{LL1} {\scshape Grafakos, L. and Li, X}. { The disc as a bilinear multiplier},
{\em Amer. J. of Math}. {\bf 128} (2006), 91-119. MR2197068 (2006i:42012)

\bibitem{LL2} {\scshape Grafakos, L and Li X}.  {Uniform bounds for the bilinear Hilbert transforms, I},
{\em Ann. of Math}. {\bf 159} (2004), 889-993. MR2113017 (2006e:42011) 




\bibitem{DRubio} {\scshape  Duoandikoetxea, J. and Rubio de Francia, J. L}.
{Maximal and singular integral operators via Fourier transform
estimates}, {\em Invent. Math}.  {\bf 84} (1986), 541-561. MR0837527 (87f:42046)

\bibitem{Lacey} {\scshape Lacey, M. T}.
{The bilinear maximal function maps into $L^p$, $p>2/3$}, {\em Ann. Math}.
{\bf 151} (2000), no.1, 35-57. MR1745019 (2001b:42015) 

\bibitem{LT1}  {\scshape Lacey, M. T. and  Thiele, C. M}.
{ $L^p$ estimates on the bilinear Hilbert transform
for $2<p<\infty$,}
{\em Ann. Math}. {\bf 146} (1997), 693-724. MR1491450 (99b:42014) 

\bibitem{LT2}  {\scshape  Lacey, M. T. and Thiele, C. M}.
 {On Calder\'on's conjecture},
{\em Ann. Math}. {\bf 149} (1999), 475-496. MR1689336 (2000d:42003)  

\bibitem{Li} {\scshape Li, X}. { Uniform bounds for the bilinear Hilbert transforms, II}, 
{\em Rev. Mat. Iberoamericana} {\bf 22} (2006), no.3, 1067-1124. MR2320411 (2008c:42014)  



\bibitem{MTT2} {\scshape Muscalu, C., Tao, T. and Thiele, C. M}.
{Multilinear operators given by singular multipliers},
{\em J. Amer. Math. Soc}. {\bf 15} (2002), 469-496. MR1887641 (2003b:42017)  

\bibitem{MTT1} {\scshape Muscalu, C.,  Tao, T.,  and  Thiele, C. M}.
{Uniform estimates on paraproducts}, {\em Journal d'Analyse de Jerusalem}
{\bf 87} (2002), 369-384.  MR1945289 (2004a:42023)

\bibitem{MTT3} {\scshape Muscalu, C.,  Tao, T. and Thiele, C. M}.
{Uniform estimates for multilinear operators with modulation symmetry}, 
{\em Journal d'Analyse de Jerusalem}  {\bf 88} (2002), 255-309. MR1979774 (2004d:42032)  


\bibitem{NVWW} {\scshape Nagel, A., Vance, I.,  Wainger, S.,  and  Weinberg, D}.
{ Hilbert transforms for convex curves}, {\em Duke. Math. J}. {\bf 50}
(1983), 735-744. MR0833365 (88b:42025) 

\bibitem{sogge} {\scshape Sogge, C. D}. { Fourier integrals in classical analysis},
{\em Cambridge University Press}, 1993.

\bibitem{Stein} {\scshape Stein, E}.
{Harmonic analysis, real-variable methods, orthogonality, and oscillatory integrals},
{\em Princeton}, (1993).

\bibitem{Stein1} {\scshape  Stein, E}.
{Oscillatory integrals related to Radon-like transforms},
{\em J. of Fourier Analysis and Appl., Kahane special issue}, (1995), 535--551.
MR1364908 (96i:42013)


\bibitem{SW} {\scshape Stein, E. and Wainger, S}.
{Problems in harmonic analysis related to curvature}, {\em Bull. Amer. Math. Soc}.
{\bf 84} (1978), 1239-1295. MR0508453 (80k:42023)

\bibitem{T} {\scshape Thiele, C. M}. {A uniform estimate}, {\em Ann. of Math}. {\bf 157}
(2002), 1-45. MR1933076 (2003i:47036)  

\end{thebibliography}
\end{document}